\documentclass[12pt]{amsart}

\textwidth 16cm
\textheight 22cm
\headheight 0.5cm
\evensidemargin 0.3cm
\oddsidemargin 0.2cm

\usepackage[utf8]{inputenc}
\usepackage{amssymb,amsmath,amsthm, url, comment}
\usepackage[all]{xy}
\begin{document}

\newtheorem{thm}{Theorem}[section]
\newtheorem{prop}[thm]{Proposition}
\newtheorem{lem}[thm]{Lemma}
\newtheorem{cor}[thm]{Corollary}

\theoremstyle{definition}
\newtheorem{defn}[thm]{Definition}
\newtheorem{remark}[thm]{Remark}
\newtheorem{remarks}[thm]{Remarks}
\newtheorem{example}[thm]{Example}
\newtheorem{question}[thm]{Question}
\newtheorem{reduction}[thm]{Reduction}
\newtheorem{conjecture}[thm]{Conjecture}
\newtheorem*{ack}{Acknowledgment}

\newcommand{\rd}{\operatorname{rd}}
\newcommand{\Char}{\operatorname{char}}
\newcommand{\Stab}{\operatorname{Stab}}
\newcommand{\ed}{\operatorname{ed}}
\newcommand{\trdeg}{\operatorname{trdeg}}
\newcommand{\Sym}{\operatorname{S}}
\newcommand{\ind}{\operatorname{ind}}
\newcommand{\Ass}{\operatorname{Ass}}
\newcommand{\Gr}{\operatorname{Gr}}
\newcommand{\Fields}{\operatorname{Fields}}
\newcommand{\Sets}{\operatorname{Sets}}
\newcommand{\Orth}{\operatorname{O}}
\newcommand{\GL}{\operatorname{GL}}
\newcommand{\SO}{\operatorname{SO}}
\newcommand{\PGL}{\operatorname{PGL}}
\newcommand{\SL}{\operatorname{SL}}
\newcommand{\Spec}{\operatorname{Spec}}
\newcommand{\Gal}{\operatorname{Gal}}
\newcommand{\rank}{\operatorname{rank}}
\newcommand{\Span}{\operatorname{Span}}
\newcommand{\Spin}{\operatorname{Spin}}
\newcommand{\Alt}{\operatorname{A}}
\newcommand{\Br}{\operatorname{Br}}
\newcommand{\Aut}{\operatorname{Aut}}
\newcommand{\Rad}{\operatorname{Rad}}
\newcommand{\Sp}{\operatorname{Sp}}
\newcommand{\lev}{\operatorname{lev}}
\newcommand{\Ker}{\operatorname{Ker}}

\newcommand{\bbC}{\mathbb{C}}
\newcommand{\bbG}{\mathbb{G}}
\newcommand{\bbA}{\mathbb{A}}
\newcommand{\bbP}{\mathbb{P}}
\newcommand{\bbZ}{\mathbb{Z}}
\newcommand{\bbQ}{\mathbb{Q}}
\newcommand{\bbF}{\mathbb{F}}

\author{Zinovy Reichstein}
\address{Department of Mathematics\\
	University of British Columbia\\
	Vancouver, BC V6T 1Z2\\Canada}
\email{reichst@math.ubc.ca}
\thanks{Zinovy Reichstein was partially supported by
	National Sciences and Engineering Research Council of
	Canada Discovery grant 253424-2017.}

\subjclass[2020]{20G10, 20G15}

\keywords{Resolvent degree, Hilbert's 13th Problem, algebraic group, torsor}

\title[Hilbert's 13th Problem for Algebraic Groups]{Hilbert's 13th Problem for Algebraic Groups}

	\begin{abstract} The algebraic form of Hilbert's 13th Problem asks for the resolvent degree $\rd(n)$ of the general 
	polynomial $f(x) = x^n + a_1 x^{n-1} + \ldots + a_n$ of degree $n$, where $a_1, \ldots, a_n$ are independent variables.
	The resolvent degree is the minimal integer $d$ such that every root of $f(x)$ can be obtained in a finite number of steps, 
	starting with $\bbC(a_1, \ldots, a_n)$ and adjoining algebraic functions in $\leqslant d$ variables at each step.
	Recently Farb and Wolfson defined the resolvent degree $\rd_k(G)$ of any finite group $G$ and any base field $k$ 
	of characteristic $0$. In this setting $\rd(n) = \rd_{\bbC}(\Sym_n)$, where 
	$\Sym_n$ denotes the symmetric group. In this paper we define $\rd_k(G)$ for every algebraic group 
	$G$ over an arbitrary field $k$, investigate the dependency of this quantity on $k$ and
	show that $\rd_k(G) \leqslant 5$ for any field $k$ and any connected group $G$. The question of whether 
	$\rd_k(G)$ can be bigger than $1$ for any field $k$ and any algebraic group $G$ over $k$ 
	(not necessarily connected) remains open.
    \end{abstract}

\maketitle

\section{Introduction}

	The algebraic forms of Hilbert's 13th Problem asks for the resolvent degree $\rd(n)$, which is the smallest integer
	$d$ such that a root of the general polynomial
	\[ f(x) = x^n + a_1 x^{n-1} + \ldots + a_n \]
	can be expressed as a composition of algebraic functions of $d$ variables with complex coefficients. 
	It is known that $\rd(n) = 1$ when $n \leqslant 5$, and that $1 \leqslant \rd(n) \leqslant n - \alpha(n)$.
	where $\alpha(n)$ is an unbounded but very slow growing function of $n$. Classical upper bounds of this form have been recently
	sharpened by Wolfson~\cite{wolfson}, Sutherland~\cite{sutherland} and Heberle-Sutherland~\cite{hs}. On the other hand,
	it is not known whether or not $\rd(n) > 1$ for any
	$n \geqslant 6$. For a brief informal introduction to Hilbert's 13th Problem, see~\cite[Section 1]{reichstein-ems}.
	For a more detailed discussion, see~\cite{dixmier, farb-wolfson}. 
	
	Farb and Wolfson~\cite{farb-wolfson} defined the resolvent degree $\rd_k(G)$ for every finite group $G$ over an arbitrary base field $k$ of characteristic $0$. In this setting $\rd(n) = \rd_{\bbC}(\Sym_n)$, and it is not known whether or not $\rd_k(G)$ can ever be $> 1$.
	
	In this paper we extend their definition of $\rd_k(G)$ to an arbitrary algebraic group $G$ (not necesarily finite, affine or smooth) defined over an arbitrary field $k$. Our definition proceeds in three steps. First we define the level of a finite field extension (Definition~\ref{def.level}), then the resolvent degree of a functor (Definition~\ref{def.rd}), then the resolvent degree of a algebraic group (Definition~\ref{def.rd-group}). Out first main result is the following. 
	
\begin{thm} \label{thm.main3} Let $G$ be a connected algebraic group over a field $k$. 
Then 

\smallskip
(a) $\rd_k(G) \leqslant 5$.

\smallskip
(b) Moreover, if $G$ has no simple components of type $E_8$, then $\rd_k(G) \leqslant 1$.
\end{thm}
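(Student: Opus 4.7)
My plan is to reduce to the case of a simply connected simple group via structure theory and treat simple types individually, with $E_8$ singled out.

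First I would establish a devissage principle for short exact sequences of algebraic groups $1 \to N \to G \to Q \to 1$, of the form $\rd_k(G) \leqslant \max(\rd_k(N), \rd_k(Q))$ (or the precise analogue appropriate to the definition of $\rd_k(G)$ given earlier). Applied to the unipotent radical of a connected $G$, this reduces to the reductive case. Writing a reductive group as an extension of a semisimple group by a central torus handles the reductive step. A central isogeny $\widetilde{G} \to G^{\mathrm{ss}}$ from the simply connected cover of the semisimple quotient, whose kernel is of multiplicative type, reduces the problem to simply connected semisimple groups. Such a group is an almost direct product of simply connected simple groups, which further reduces everything to the simply connected simple case.

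Next I would verify $\rd_k \leqslant 1$ for the auxiliary pieces used above: tori and groups of multiplicative type by Kummer theory, and connected unipotent groups by $H^1$-vanishing in characteristic zero together with an Artin--Schreier-type tower in positive characteristic. For a simply connected simple group $G$ not of type $E_8$, I would invoke classical parametrizations of torsors: central simple algebras with involution for classical types $A$--$D$, octonion and Albert algebras for $G_2$ and $F_4$, and completeness of the Rost invariant combined with symbolic presentations for $E_6$ and $E_7$. In each of these cases a versal $G$-torsor can be trivialized by a tower whose every step is of level $\leqslant 1$, establishing part (b) and reducing part (a) to $G = E_8$.

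The main obstacle is the $E_8$ case. I would decompose $H^1(k, E_8)$ into its $p$-primary parts, which are nontrivial only for $p \in \{2, 3, 5\}$, and handle each prime separately by embedding a suitable finite $p$-subgroup $H_p \subset E_8$ that captures all $p$-primary classes, reducing to an $H_p$-torsor computation. For $p = 2$ and $p = 3$ the classes are controlled by Pfister forms and symbol classes, yielding level-$1$ splitting towers. The $p = 5$ case is the heart of the matter: no symbolic parametrization is available, and the natural $5$-local construction appears to force a single tower step of level at most $5$, which is where the bound in part (a) comes from. Carrying out this $5$-local construction and verifying the level bound is the crux of the argument.
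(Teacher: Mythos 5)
Your devissage skeleton for the affine case matches the paper's (the paper's Proposition~\ref{prop.group2} and the reductions in Section~\ref{sect.proof-main3}), and your treatment of the simple types other than $E_8$ is viable: the paper itself handles $B_r$, $D_r$ by diagonalizing quadratic forms, $G_2$, $F_4$ via solvable Weyl groups, and $E_6$, $E_7$ via subgroups with surjective maps on $H^1$, noting the Rost-invariant route you suggest as an alternative. Two smaller omissions: the theorem is stated for connected algebraic groups that need not be smooth or affine, so you also need to pass from $G$ to $G_{\rm red}$ (Lemma~\ref{lem.non-reduced}) and to quotient by $G_{\rm aff}$ via Chevalley's structure theorem, which requires knowing $\rd_k(A)\leqslant 1$ for an abelian variety $A$ (Proposition~\ref{prop.abelian}, proved via the $d$-torsion subgroup scheme). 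These are fixable with ideas already implicit in your plan.

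The genuine gap is the $E_8$ case, which is the entire content of part (a). First, $H^1(K,E_8)$ is a pointed set, not a group, so it has no $p$-primary decomposition; what exists is Tits' theorem that every torsor is split by an extension of degree $2^a3^b5^c$ together with prime-by-prime splitting results of Chernousov and Semenov. To combine splitting fields of coprime degrees into a splitting of the original torsor you need exactly the coprime-descent principle of Serre's Question~\ref{conj.serre}, which is open; the paper uses this strategy only for the \emph{conditional} improvement to $\rd_k(E_8)\leqslant 1$ in Proposition~\ref{prop.serre}. Second, your "$5$-local construction forcing a single tower step of level at most $5$" does not exist as stated, and it is not where the constant $5$ comes from. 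The paper's unconditional bound is obtained quite differently: $H^1(K,N)\to H^1(K,G)$ is surjective for $N$ the normalizer of a maximal torus, giving $\rd_k(E_8)\leqslant \rd_k(W)$ for the Weyl group $W$ of $E_8$; then a Tschirnhaus-type projective argument (Proposition~\ref{prop.hypersurface}(b)) applied to the $W$-invariant quadric $f_2=0$ and octic $f_8=0$ in $\bbP^7$, with generic freeness on their intersection verified via Springer's theory of regular elements of order $3$, yields $\rd_k(W)\leqslant\max\{5,\rd_k(\Sym_8)\}=5$. Without an argument of this kind (or a resolution of Serre's question), your proposal does not establish part (a).
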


Note that Theorem~\ref{thm.main3} was announced (in a weaker form and without proof) in Section 8 of my survey~\cite{reichstein-ems}.
We will also investigate the dependence of $\rd_k(G)$ on the base field $k$. Our main results in this direction are Theorems~\ref{thm.main1} and~\ref{thm.main2} below.
	
	\begin{thm} \label{thm.main1} Let $G$ be an algebraic group defined over $k$. Then $\rd_k(G) = \rd_{k'}(G_{k'})$
    for any field extension $k'/k$.	
	\end{thm}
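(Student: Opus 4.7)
The plan is to prove the two inequalities $\rd_{k'}(G_{k'}) \leqslant \rd_k(G)$ and $\rd_k(G) \leqslant \rd_{k'}(G_{k'})$ separately, both times via a base-change comparison for the level of finite field extensions. For the first inequality, the key lemma is: if $E/L$ is a finite extension with $L \supseteq k' \supseteq k$, then the level of $E/L$ over $k'$ is at most its level over $k$. Indeed, if a subfield $F \subseteq L$ with $\trdeg_k F \leqslant d$ and a finite extension $E_0/F$ with $E = L \cdot E_0$ witness the bound over $k$, then $F' := F \cdot k' \subseteq L$ satisfies $\trdeg_{k'} F' \leqslant \trdeg_k F \leqslant d$, and the pair $(F', E_0 \cdot F')$ witnesses the bound over $k'$, because $L \cdot (E_0 \cdot F') = L \cdot E_0 = E$. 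Given any $G_{k'}$-torsor $T$ over $K/k'$, one views $T$ as a $G$-torsor over $K/k$ and applies the lemma to each step of a resolution over $k$ of level $\leqslant \rd_k(G)$.

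The reverse inequality is subtler and requires descending a resolution from $k'$ back to $k$. I would first reduce to the case where $T$ is a versal $G$-torsor over some field $L/k$, so $\rd_k(G) = \rd_k(T)$. Writing $\tilde L = (L \otimes_k k')/\mathfrak{m}$ for a maximal ideal $\mathfrak{m}$, the base change $T_{k'}$ over $\tilde L$ is a versal $G_{k'}$-torsor, admitting by hypothesis a resolution of level $\leqslant d := \rd_{k'}(G_{k'})$ over $k'$. Such a resolution involves only finitely many elements of $k'$, and so is already defined over a finitely generated subextension $k_0$ of $k$ inside $k'$. A spreading-out and specialisation argument over $\Spec k_0$ is then intended to produce a resolution of $T$ over $k$ of level $\leqslant d$.

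The main obstacle will be this specialisation step. One has to verify that a suitable specialisation preserves both the level bound on each step of the tower (so that transcendence bases over $k_0$ specialise to transcendence bases of the same size over $k$) and the trivialisation of the versal torsor $T$ at the end. I expect that the versality of $T$ provides enough flexibility in the choice of specialisation, combined with constructibility of level in algebraic families, to push this through; but pinning down the right genericity condition is the delicate part. When $k$ is not algebraically closed, one may additionally need to permit specialisations into an algebraic extension of $k$, a manoeuvre which is compatible with the definition of $\rd_k(G)$ since resolvent degree does not change under algebraic base extension.
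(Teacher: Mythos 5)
Your first inequality, $\rd_{k'}(G_{k'}) \leqslant \rd_k(G)$, is correct and is exactly the paper's argument: the stepwise comparison of levels over $k$ and over $k'$ is Lemma~\ref{lem.base-change}(a), and applying it to a splitting tower of a torsor is Proposition~\ref{prop.functor2}(a). No issues there.

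The reverse inequality is where the substance lies, and your proposal stops exactly at the point where the real work begins. The gap you flag --- ``pinning down the right genericity condition'' for the specialisation --- is not a technicality to be expected to go through; it is the theorem. Two concrete problems. First, there is no available ``constructibility of level in algebraic families'': the level is defined by an existential quantifier over towers of finite extensions, and nothing in the theory lets you spread out such a tower over $\Spec k_0$ and conclude that a generic (or any) fibre has the same level. What the paper does instead is reduce $k'/k$ to a single purely transcendental step $k(t)/k$ (after using Lemma~\ref{lem.claim} to replace $k'$ by a finitely generated subfield and Proposition~\ref{prop.functor2}(b) to strip off the algebraic part), and then replaces $K(t)$ by the complete field $K((t))$. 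Over a complete discretely valued field the tower can be normalised (Proposition~\ref{prop.lev-Galois}) so that each step is either purely inseparable or Galois with simple Galois group, and then ramification theory (Lemma~\ref{lem.ed-valuation}, Proposition~\ref{prop.lev-valuation}) controls the level of the residue-field tower. Passing the splitting of the torsor from $L$ down to its residue field $L_\nu$ requires knowing that $H^1(E,G)\to H^1(E((t)),G)$ has trivial kernel for perfect $E$; for a general algebraic group (not affine, not reductive) this is Lemma~\ref{lem.main1a}, proved via Chevalley's structure theorem, properness of $G/B$, the valuative criterion, and specialness of Borel subgroups. None of these ingredients appears in your sketch, and your versal-torsor reduction does not avoid them: to deduce $\rd_k(T')\leqslant\rd_k(T)$ for an arbitrary torsor $T'$ from a versal $T$ you already need a specialisation result for resolvent degree of the same kind.

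Second, even once the specialisation machinery is in place, it only yields $\rd_k(G) \leqslant \max\{\rd_{k'}(G_{k'}),\,1\}$: specialising a tower inevitably loses control up to solvable and purely inseparable extensions, which have level $\leqslant 1$ but not $0$ (this is visible in the abelian-inertia case of Proposition~\ref{prop.lev-valuation}). So the case $\rd_{k'}(G_{k'})=0$ must be handled separately; the paper does this in Lemma~\ref{lem.main1b} by identifying $\rd=0$ with specialness and invoking the invariance of $\ed_k(G)=0$ under extension of algebraically closed base fields. Your proposal does not address this case at all.
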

	
	The case where $k'$ is algebraic over $k$ is fairly straightforward (see Proposition~\ref{prop.functor2}(b)); 
	the main point here is that the extension $k'/k$ can be arbitrary. In particular, if $G$ is defined over $\mathbb Z$, when
	Theorem~\ref{thm.main1} tells us that $\rd_{k'}(G_{k'}) = \rd_k(G_k)$ for any two fields $k$ and $k'$ of the same characteristic.
	In arbitrary characteristic, we prove the following.
	
	\begin{thm} \label{thm.main2} Let $G$ be a smooth affine group scheme over $\mathbb Z$. Denote the connected component
	of $G$ by $G^0$. Assume that $G^0$ is split reductive and $G/G^0$ is finite over $\bbZ$. Let $k$ be a field of characteristic $0$.
	Then $\rd_k(G_k) \geqslant \rd_k(G_{k_0})$ for any other field $k_0$. 
	\end{thm}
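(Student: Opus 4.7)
The plan is to reduce, via Theorem~\ref{thm.main1}, to the setting of a mixed-characteristic complete DVR, and then run a specialization argument on a versal $G$-torsor defined over $\bbZ$. If $\Char(k_0)=0$, applying Theorem~\ref{thm.main1} to a common extension of $k$ and $k_0$ already yields equality, so assume $p:=\Char(k_0)>0$. By Theorem~\ref{thm.main1} I may freely enlarge $k$ and $k_0$ within their own characteristics: replace $k_0$ by $k_0^{\mathrm{alg}}$ and let $R:=W(k_0)$ be the Witt ring, a complete DVR of mixed characteristic $(0,p)$ with residue field $k_0$; then replace $k$ by $\operatorname{Frac}(R)$. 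Now $k$ and $k_0$ are the fraction field and residue field of the single DVR~$R$.

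Next I would set up a versal torsor over $\bbZ$. Using that $G$ is smooth affine over $\bbZ$ with $G^0$ split reductive and $G/G^0$ finite étale, fix a faithful representation $G\hookrightarrow\GL_{n,\bbZ}$ and form the smooth $\bbZ$-scheme $V:=\GL_n/G$; the projection $\pi\colon\GL_n\to V$ is a versal $G$-torsor. For any field $L$, the restriction $\eta_L$ of $\pi_L$ to the generic point of $V_L$ is a versal $G_L$-torsor, and combining versality with the non-increase of $\lev$ under specialization yields $\rd_L(G_L)=\lev(\eta_L)$. Let $A$ be the Henselization of the local ring of $V_R$ at the generic point of the special fiber $V_{k_0}$; this is a Henselian DVR with $\operatorname{Frac}(A)=k(V)$ and residue field $k_0(V)$. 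The restriction of $\pi$ to $\Spec(A)$ is a $G_A$-torsor $\xi$ with generic fiber $\eta_k$ and closed fiber $\eta_{k_0}$.

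The core argument is then the following. Let $d:=\lev(\eta_k)=\rd_k(G_k)$ and pick a finite extension $M/k(V)$ splitting $\eta_k$ with $\lev(M/k(V))\le d$. Let $B$ be the integral closure of $A$ in $M$; since $A$ is Henselian, $B$ is a Henselian DVR with fraction field $M$ and residue field $M_0$, a finite extension of $k_0(V)$. The generic fiber $\xi|_M=\eta_k|_M$ is trivial, and by the Grothendieck--Serre conjecture for reductive groups over Henselian regular local rings (Nisnevich), the restriction $H^1(B,G_B)\to H^1(M,G_M)$ is injective; hence $\xi|_B$ is itself trivial, so its closed fiber $\eta_{k_0}|_{M_0}$ is trivial and $M_0$ splits $\eta_{k_0}$. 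Combined with the level-specialization inequality $\lev(M_0/k_0(V))\le \lev(M/k(V))$, this gives $\lev(\eta_{k_0})\le d$, and therefore $\rd_{k_0}(G_{k_0})=\lev(\eta_{k_0})\le d=\rd_k(G_k)$.

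The principal obstacle is the level-specialization inequality $\lev(M_0/k_0(V))\le \lev(M/k(V))$. I expect this to be a general property of $\lev$ established in the foundational section developing Definition~\ref{def.level}: any tower of extensions realizing $\lev(M/k(V))\le d$, each step an algebraic function in $\le d$ variables, can be spread out as finite $A$-algebras and reduced modulo $\mathfrak m_A$ to produce an analogous tower over $k_0(V)$, with the transcendence-degree bound preserved because subrings of $A$ of transcendence degree $\le d$ over $\bbZ$ specialize to subrings of $k_0(V)$ of the same bound. A secondary technical input is the injectivity of $H^1(B,G_B)\to H^1(M,G_M)$ for Henselian $B$ and the reductive-plus-finite-étale group $G_A$; this follows from Nisnevich's theorem for the reductive $G^0_A$ combined with a standard reduction using the finite étale component group $G/G^0$.
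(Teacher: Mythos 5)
Your overall strategy -- pass to a mixed-characteristic complete/Henselian DVR, transfer a splitting field from the generic fibre to the special fibre via Grothendieck--Serre/Nisnevich injectivity of $H^1(B,G)\to H^1(\operatorname{Frac}(B),G)$, and then specialize the level of the splitting field -- is essentially the paper's (Proposition~\ref{prop.dvr}). But there are genuine gaps at the two places where the real work happens.

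First, the ``level-specialization inequality'' you defer to the foundational section is not $\lev(M_0/K_0)\leqslant\lev(M/K)$; what the paper proves (Proposition~\ref{prop.lev-valuation}) is $\lev_{k_\nu}(L_\nu/K_\nu)\leqslant\max\{\lev_k(L/K),\,1\}$, and only under the hypothesis that the residue field is perfect. Your proposed proof by ``spreading out the tower over $A$ and reducing mod $\mathfrak m_A$'' does not work: individual steps of the tower can be totally or wildly ramified, in which case the residue tower degenerates and controls nothing, and the abelian/inseparable steps are salvaged in the paper only by invoking solvability (Lemma~\ref{lem.radical}), which is exactly where the extra ``$1$'' enters. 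The unramified case requires the inertia-group analysis of Lemma~\ref{lem.ed-valuation}, which in turn forces the preliminary reduction to towers of simple Galois steps with simple Galois groups (Proposition~\ref{prop.lev-Galois}). This is the technical heart of the theorem and cannot be waved through.

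Second, because the correct inequality carries the $\max\{\cdot,1\}$, your argument can only yield $\rd_{k_0}(G_{k_0})\leqslant\max\{\rd_k(G_k),1\}$, which does not prove the theorem when $\rd_k(G_k)=0$. The paper treats that case separately: $\rd_k(G_k)=0$ means $G_k$ is special, and one transfers specialness across characteristics via Grothendieck's classification (derived subgroup a product of simply connected groups of types $A$ and $C$), which is read off the root datum -- this is the only place the ``split'' hypothesis is used. Your proposal omits this entirely.

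Two smaller points. Your reduction to a single versal torsor via $\rd_L(G_L)=\lev(\eta_L)$ is itself an unproved claim that hides the same specialization difficulty (with the same $\max\{\cdot,1\}$ defect); the paper avoids it by starting from an \emph{arbitrary} torsor over an arbitrary $K_0\supset k_0$ and lifting it to $W(K_0)$ using the bijectivity of $H^1(W(K_0),G)\to H^1(K_0,G)$ from SGA3. And the Henselization of $\mathcal O_{V_R,\xi}$ does not have fraction field $k(V)$ -- its fraction field is a proper algebraic extension -- though this is fixable via Lemma~\ref{lem.lev1}.
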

	
 We will deduce Theorem~\ref{thm.main2} from a more general result, Proposition~\ref{prop.dvr}, which compares the resolvent degrees of
 the general and the special fibers of a group scheme over a discrete valuation ring. 
 Note that both Theorems~\ref{thm.main1} and~\ref{thm.main2} apply in the classical case, where $G$ is an abstract 
 finite group viewed as a group scheme over $\mathbb Z$, in particular to $G = \Sym_n$.

\smallskip
A key role in our proof of Theorem~\ref{thm.main3}(b) will be played by a theorem of Tits, which asserts that 
if $G$ is a simple group over $k$ of any type other 
than $E_8$ and $T \to \Spec(K)$ is a $G$-torsor, then $T$ can be ``split by radicals", i.e., $T$ splits over some 
radical extension of $K$; see Section~\ref{sect.proof-main3}. Tits asked whether or not the same is true for simple groups 
of type $E_8$. Using the arguments in Section~\ref{sect.proof-main3}, one readily sees that a positive answer to this question 
would imply the following.

\begin{conjecture} \label{conj.main5}
$\rd_k(G) \leqslant 1$ for any connected algebraic group $G$ over any field $k$.
\end{conjecture}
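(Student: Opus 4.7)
The plan is to extend the argument used to prove Theorem~\ref{thm.main3}(b), whose only shortcoming is the $E_8$ case. Since that theorem already gives $\rd_k(G) \leqslant 1$ for every connected $G$ whose simple factors avoid type $E_8$, Conjecture~\ref{conj.main5} is equivalent to establishing the same bound for an absolutely simple $G$ of type $E_8$.

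First I would reuse the reductions from Section~\ref{sect.proof-main3} verbatim. Theorem~\ref{thm.main1} lets one change base fields freely; the solvable radical contributes nothing past a finite separable extension (torus torsors become trivial after splitting the torus, and unipotent torsors vanish in characteristic zero and are otherwise handled by purely inseparable base change); and a semisimple group, after a central isogeny and a Galois splitting, becomes a product of absolutely simple simply connected groups. Each of these manipulations costs level at most $1$, and none of them distinguishes $E_8$ from the other simple types.

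Second, for each simple type other than $E_8$ the bound $\rd_k(G) \leqslant 1$ follows from Tits' theorem: every $G$-torsor $T \to \Spec(K)$ splits over some radical extension $K'/K$, and such an extension has $\lev(K'/K) \leqslant 1$, since each step adjoins a root of a polynomial $x^n - a$, an algebraic function in the single variable $a$. The identical argument disposes of $E_8$ the moment one produces, for every $E_8$-torsor $T$, a splitting field of level at most $1$ over its base.

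The sole obstacle is precisely this last point, and I expect it to be genuinely difficult. A positive answer to Tits' own question --- whether $E_8$-torsors can be split by radicals --- would immediately finish the proof; failing that, one would need a substantively new construction of level-one splitting fields, perhaps drawing on the Rost invariant, the Serre $J$-invariants, or explicit versal-torsor constructions for groups of type $E_8$. The reduction machinery is already in place, but the $E_8$ case itself is currently out of reach, and any serious attempt at the conjecture will have to pass through it.
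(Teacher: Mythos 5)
You have correctly recognized that this statement is an open conjecture, not a theorem: the paper does not prove it, and neither do you. Your assessment of the situation matches the paper's own framing exactly --- the reductions of Section~\ref{sect.proof-main3} (smooth, affine, semisimple, almost simple) cost at most level $1$ and reduce everything to the single question of whether every $E_8$-torsor admits a splitting field of level $\leqslant 1$, and a positive answer to Tits' question on splitting $E_8$-torsors by radicals would close this gap. This is precisely what the paper states in the introduction. So as a ``proof'' the proposal has the unavoidable gap of not resolving the $E_8$ case, but as a reduction it is sound and faithful to the paper.

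Where the paper goes further than you do is Section~\ref{sect.serre}: rather than leaving the $E_8$ case hanging on Tits' question, Proposition~\ref{prop.serre} gives a second conditional route, deducing the conjecture from a positive answer to Serre's question on torsors split by finite extensions of coprime degrees (Question~\ref{conj.serre}), in the special case of split $E_8$ over a solvably closed field containing $\bbC$. The mechanism is the one you gesture at when you mention the Rost invariant: over a solvably closed $K$ one has $H^d(K,\mu_n)=1$, so the class of an $E_8$-torsor $T$ lies in the kernel of the Rost invariant; Tits' splitting theorem gives a splitting field of degree divisible only by $2$, $3$, $5$; Chernousov's theorems give splitting fields of degree prime to $3$ and prime to $5$; and Semenov's invariant gives one of odd degree. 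These four degrees are coprime, so a positive answer to Serre's question would force $T$ to be split over $K$ itself, and Proposition~\ref{prop.level-d}(b) then yields $\rd_k(E_8)\leqslant 1$. If you want to push your proposal beyond a restatement of the problem, this coprime-splitting argument is the concrete form that ``drawing on the Rost invariant'' takes in the paper. One minor quibble with your reduction narrative: the solvable radical contributes nothing at all (not merely ``nothing past a finite separable extension''), since over an algebraically closed base a connected solvable group is special by Lemma~\ref{lem.group1}(b).
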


Theorem~\ref{thm.main3}(a) (or more precisely, Proposition~\ref{prop.simple}(a)),
may thus be viewed as a partial answer to Tits' question. Note that it is not known whether 
or not $\rd_k(G)$ can be $> 1$ for any field $k$ and any algebraic group $G$ defined 
over $k$ (not necessarily connected). 

The remainder of this paper is structured as follows. Sections~\ref{sect.prel1} and~\ref{sect.prel2}
are devoted to preliminary material on essential dimension
of finite-dimensional algebras and field extensions. In Section~\ref{sect.level} defines 
the level of a finite field extension and explores its elementary properties.
Section~\ref{sect.valuation} studies how the level changes under specialization.
Section~\ref{sect.level-d} introduces the level $d$ closure of a field. 
The resolvent degree of a functor is introduced in Section~\ref{sect.rd-functor}.
This notion parallels the notion 
of essential dimension of a functor, due to Merkurjev, Berhuy and Favi~\cite{berhuy-favi} but the type of functor we allow is more restrictive.
Much of the work towards proving 
Theorems~\ref{thm.main3} - \ref{thm.main2} is, in fact, done in the general setting of functors
in Sections~\ref{sect.base-change-alg} and~\ref{sect.base-change-arb}. Section~\ref{sect.rd-group}
introduces the notion of resolvent degree of an algebraic group. In Section~\ref{sect.rd-abelian-variety}
we study the resolvent degree of infinitesimal groups and abelian varieties. The proof of Theorem~\ref{thm.main1} 
is completed in Section~\ref{sect.proof-main1}, the proof of Theorem~\ref{thm.main2} in Section~\ref{sect.proof-main2}, and the proof of Theorem~\ref{thm.main3} in Sections~\ref{sect.upper-bounds} - \ref{sect.proof-main3}. 
In the last section we show that Conjecture~\ref{conj.main5} 
follows from a positive answer to a long-standing open question of Serre (Question~\ref{conj.serre}).

The main focus of this paper is on the aspects of the subject which have not been previously investigated: resolvent degree of
connected groups and dependence of resolvent degree on the base field. However, many of the preliminary results 
overlap with existing literature and some have classical roots. In particular, Section~\ref{sect.level} overlaps 
with~\cite[Section 2]{farb-wolfson}, Section~\ref{sect.rd-group} with \cite[Section 3]{farb-wolfson}.
Section~\ref{sect.level-d} elaborates on the short note of Arnold and Shimura~\cite[pp,~45-46]{browder}; there is also some overlap between
Section~\ref{sect.upper-bounds} and \cite[Section 4]{wolfson}. I have tried to indicate these connections throughout 
the paper. I have also included independent characteristic-free proofs for most background results, with the goal of making the exposition 
largely self-contained. The arguments in this paper are mostly algebraic and valuation-theoretic, with only a few exceptions 
(e.g., in Section~\ref{sect.upper-bounds}). I have not included references to classical literature; an interested 
reader can find them in~\cite{farb-wolfson}.


\section{Preliminaries on finite-dimensional algebras}
\label{sect.prel1}

Let $K$ be a field and $A$ be finite-dimensional $K$-algebra. We will say that $A$ descends to a subfield $K_0$ of $K$ if there exists 
a $K_0$-algebra $A_0$ such that $A \simeq_K A_0 \otimes_{K_0} K$. Here $\simeq_K$ stands for isomorphism of algebras over $K$.
We will sometimes say that $A/K$ descends to $A_0/K_0$.

\begin{lem} \label{lem.algebras1} Let $k \subset K$ be a field extension, $A$ a finite-dimensional $K$-algebra, and
$S$ a finite subset of $A$. Then $A/K$ descends to $A_0/K_0$ such that $K_0$ is finitely 
generated over $k$, $A_0$ is a $K_0$-subalgebra of $A$, and $S \subset A_0$. 
\end{lem}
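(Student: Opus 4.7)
The plan is to write $A$ in terms of structure constants relative to a $K$-basis, then descend $K$ to the finitely generated subfield of $K/k$ generated by the finitely many constants needed to describe the multiplication and to express the elements of $S$.

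More precisely, I would begin by fixing a $K$-basis $e_1, \ldots, e_n$ of $A$, choosing $e_1 = 1$ if $A$ is assumed unital. Writing
\[ e_i e_j = \sum_{l=1}^n c_{ij}^l \, e_l \quad \text{with } c_{ij}^l \in K \]
produces a finite collection of structure constants in $K$. Similarly, each $s \in S$ has a unique expansion $s = \sum_l s_l \, e_l$ with $s_l \in K$, yielding another finite collection of elements of $K$. Let $K_0 \subset K$ be the subfield generated over $k$ by all the $c_{ij}^l$ and all the $s_l$ (as $s$ ranges over $S$). Then $K_0/k$ is finitely generated, since $S$ is finite and there are only $n^3$ structure constants.

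Next I would define $A_0 \subset A$ to be the $K_0$-linear span of $e_1, \ldots, e_n$. Because every $c_{ij}^l$ lies in $K_0$, the subset $A_0$ is closed under multiplication and is therefore a $K_0$-subalgebra of $A$; it contains each $s \in S$ by construction, and it contains $1$ since $1 = e_1$. The elements $e_1, \ldots, e_n$ form a $K_0$-basis of $A_0$ (they are $K$-linearly independent, hence a fortiori $K_0$-linearly independent). The natural $K$-algebra homomorphism $A_0 \otimes_{K_0} K \to A$ sends the $K$-basis $e_i \otimes 1$ to the $K$-basis $e_i$ of $A$, so it is an isomorphism, showing that $A/K$ descends to $A_0/K_0$.

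There is no real obstacle here; the argument is entirely routine once one fixes a basis. The only small point to watch is that the construction respects whatever structural hypotheses are in force on $A$ (unitality, associativity, existence of an involution, etc.): any such structure is encoded by finitely many polynomial identities in the structure constants, all of which automatically transfer to $A_0$. If needed, one can enlarge the finite set of generators of $K_0$ to include the coordinates of additional distinguished elements (such as the unit, or the image of an involution on each $e_i$), without affecting the conclusion that $K_0/k$ is finitely generated.
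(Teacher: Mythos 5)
Your argument is correct and is essentially the paper's own proof: both fix a $K$-basis of $A$, adjoin the structure constants and the coordinates of the elements of $S$ to $k$ to form a finitely generated field $K_0$, and take $A_0$ to be the $K_0$-span (equivalently, the $K_0$-subalgebra generated by the basis), with the natural map $A_0 \otimes_{K_0} K \to A$ an isomorphism because it carries a basis to a basis. No issues.
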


\begin{proof} Choose a $K$-vector space basis $b_1, \ldots, b_n$ in $A$. Write
$\displaystyle b_i \cdot b_j = \sum_{h = 1}^n c_{ij}^h b_h$
for every $i$, $j = 1, \ldots, n$ and
$\displaystyle s = \sum_{h = 1}^n \alpha_s^h b_h$
for every $s \in S$. Let \[ K_0 = k(c_{ij}^h, \; \alpha_{s}^h \; | \; i, j, h = 1, \ldots, n \; \; \text{and} \; \;
s \in S ) \] and $L_0$ be the $K_0$-subsalgebra of $A$ generated by $b_1, \ldots, b_n$. Then one readily sees that
$K_0$ is finitely generated over $k$, the natural map
$A_0 \otimes_{K_0} K \to A$
is an isomorphism over $K$, and $S \subset A_0$.
\end{proof}

\begin{defn} \label{def.ed-algebra}
Let $K$ be a field containing $k$ and $A$ be a finite-dimensional $K$-algebra. The essential dimension $\ed_k(A/K)$ is
the minimal value of $\trdeg_k(K_0)$, where the minimum is taken over all intermediate fields $k \subset K_0 \subset K$ such that
$L/K$ descends to $K_0$.
\end{defn}

\begin{lem} \label{lem.algebras2} Let $K$ be a field containing $k$ and $A$ a finite-dimensional $K$-algebra. Then $\ed_k(A) < \infty$.
Moreover, $A/K$ descends to some $A_0/K_0$ such that $K_0$ is finitely generated over $k$ and $\ed_k(A) = \ed_k(A_0) = \trdeg_k(K_0)$. 
\end{lem}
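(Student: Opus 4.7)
The plan is to bootstrap Lemma~\ref{lem.algebras1} twice. For finiteness, I would apply Lemma~\ref{lem.algebras1} directly with $S = \emptyset$: it produces a descent of $A/K$ to some $A_1/K_1$ with $K_1$ finitely generated over $k$, and any finitely generated extension has finite transcendence degree, so $\ed_k(A) \leqslant \trdeg_k(K_1) < \infty$.

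For the main statement, set $d := \ed_k(A)$. Since $d$ is the infimum of a set of non-negative integers and we have just shown it is finite, it is attained: there exist an intermediate field $k \subset K_0' \subset K$ with $\trdeg_k(K_0') = d$ and a descent $A_0'/K_0'$ of $A$. I would then apply Lemma~\ref{lem.algebras1} a second time, this time to $A_0'$ as a $K_0'$-algebra over the base field $k$, again with $S = \emptyset$. This produces a $k$-finitely generated subfield $K_0 \subseteq K_0'$ and a $K_0$-subalgebra $A_0 \subseteq A_0'$ with $A_0 \otimes_{K_0} K_0' \simeq_{K_0'} A_0'$. Tensoring with $K$ over $K_0'$ gives $A_0 \otimes_{K_0} K \simeq_K A$, so $A/K$ descends to $A_0/K_0$.

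It remains to verify the two equalities. From $K_0 \subseteq K_0'$ we get $\trdeg_k(K_0) \leqslant d$; the reverse inequality holds because $A$ descends to $K_0$ and so $\trdeg_k(K_0) \geqslant \ed_k(A) = d$. For $\ed_k(A_0) = d$, the bound $\ed_k(A_0) \leqslant \trdeg_k(K_0) = d$ is automatic, while any descent of $A_0$ to $A_0''/K_0''$ with $\trdeg_k(K_0'') < d$ would, upon tensoring with $K$, give a descent of $A$ with the same transcendence degree, contradicting $\ed_k(A) = d$.

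The argument is essentially formal and I do not anticipate any substantive obstacle. The only point worth double-checking is the transitivity of descent encoded in the tensor product chain $A_0 \otimes_{K_0} K_0' \otimes_{K_0'} K \simeq A$, which follows from associativity of tensor product, and the fact that a finitely generated field extension has finite transcendence degree, which is standard.
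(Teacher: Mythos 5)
Your proposal is correct and follows essentially the same route as the paper: both establish finiteness and the refined descent by applying Lemma~\ref{lem.algebras1} to a transcendence-degree-minimizing descent $A_0'/K_0'$, then conclude the equalities $\ed_k(A)=\ed_k(A_0)=\trdeg_k(K_0)$ by minimality. The only difference is cosmetic — you prove finiteness up front rather than letting it fall out at the end.
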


\begin{proof} 
Descend $A/K$ to $A_1/K_1$ so that $d = \trdeg_k(K_1)$ is the smallest possible, i.e., $d = \ed_k(A)$.
Note that a priori $d$ is a non-negative integer or $\infty$.
By Lemma~\ref{lem.algebras1}, $A_1/K_1$ further descends to $A_0/K_0$, where $k \subset K_0 \subset K_1$ and $K_0$ is finitely generated over
$k$. By the minimality of $d$, $\ed_k(A) = \ed_k(A_1) = \ed_k(A_0) = \trdeg_k(K_0) = d$. Moreover, since $K_0$ is finitely generated over $k$,
$d < \infty$.
\end{proof}

\begin{lem} \label{lem.ed0} Let $k \subset k' \subset K$ be fields and $A$ be a finite-dimensional $K$-algebra.
Then

\smallskip
(a) $\ed_{k'}(A) \leqslant \ed_k(A)$.

\smallskip
(b) If $k'$ is algebraic over $k$, then $\ed_{k'}(A) = \ed_{k}(A)$.

\smallskip
(c) There exists an intermediate field $k \subset l_0 \subset k'$ such that $l_0$ is finitely generated over $k$ and
$\ed_{l}(A) = \ed_{k'}(A)$ for any $l_0 \subset l \subset k'$.
\end{lem}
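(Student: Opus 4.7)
For part (a), the plan is to take any descent $A_0/K_0$ realizing $\ed_k(A)$ with $k \subset K_0 \subset K$ and base change it to the compositum $K_0 \cdot k' \subset K$, obtaining a descent $A_0 \otimes_{K_0} (K_0 \cdot k')$ of $A/K$ over $k'$. Since any transcendence basis of $K_0$ over $k$ remains algebraically generating for $K_0 \cdot k'$ over $k'$, one obtains $\trdeg_{k'}(K_0 \cdot k') \leqslant \trdeg_k(K_0)$, hence $\ed_{k'}(A) \leqslant \ed_k(A)$. For part (b), I will pair (a) with the observation that any descent $A_0'/K_0'$ realizing $\ed_{k'}(A)$ is automatically a descent to $K_0' \supset k$, so $\ed_k(A) \leqslant \trdeg_k(K_0')$; algebraicity of $k'/k$ then yields $\trdeg_k(K_0') = \trdeg_{k'}(K_0')$ by additivity of transcendence degree in the tower $k \subset k' \subset K_0'$, which combined with (a) gives (b).

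For part (c), set $d = \ed_{k'}(A)$. The plan is to apply Lemma~\ref{lem.algebras2} relative to the base field $k'$ to obtain a descent $A \simeq A_0 \otimes_{K_0} K$ with $k' \subset K_0 \subset K$, where $K_0$ is finitely generated over $k'$ as a field and $\trdeg_{k'}(K_0) = d$. Next, I would choose field generators $y_1, \ldots, y_N \in K_0$ of $K_0/k'$, reordered so that $y_1, \ldots, y_d$ form a transcendence basis, and fix a $K_0$-basis $b_1, \ldots, b_n$ of $A_0$ with structure constants $c_{ij}^h \in K_0$. The polynomial relations satisfied by $y_{d+1}, \ldots, y_N$ over $k'(y_1, \ldots, y_d)$, together with chosen rational expressions of the $c_{ij}^h$ in the $y_i$ over $k'$, collectively involve only finitely many elements of $k'$. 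Let $l_0 \subset k'$ be the subfield generated over $k$ by all these coefficients; then $l_0$ is finitely generated over $k$.

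For any intermediate $l_0 \subset l \subset k'$, set $L := l(y_1, \ldots, y_N) \subset K$. Since the defining relations for $y_{d+1}, \ldots, y_N$ have coefficients in $l_0(y_1, \ldots, y_d) \subset L$, the extension $L/l$ has transcendence degree $d$. The structure constants $c_{ij}^h$ lie in $L$, so the $L$-span $A_L$ of $b_1, \ldots, b_n$ inside $A_0$ is closed under multiplication and is an $L$-subalgebra; comparing $K$-bases yields $A_L \otimes_L K \simeq A$, and hence $\ed_l(A) \leqslant d$. Combined with part (a), this gives $\ed_l(A) = d = \ed_{k'}(A)$. The main bookkeeping obstacle I anticipate will be confirming that finitely many elements of $k'$ suffice to capture both the field $K_0$ and the multiplicative structure of $A_0$ on the fixed basis, but this follows from the finiteness of the defining data in each case.
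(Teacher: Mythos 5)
Your proofs of (a) and (b) coincide with the paper's: descend to a minimal $K_0$ and pass to the compositum $k'K_0$ for (a), and use additivity of transcendence degree in the tower $k\subset k'\subset K_0'$ for (b). For (c) you follow the same overall strategy (apply Lemma~\ref{lem.algebras2} over $k'$, absorb finitely many elements of $k'$ into $l_0$, and use part (a) for the reverse inequality), but your execution is actually more careful than the paper's at one point: the paper bounds $\ed_{l_0}(A)$ by $\trdeg_{l_0}(K_0)$ and asserts $\trdeg_{l_0}(K_0)=\trdeg_{k'}(K_0)$, which cannot be taken literally since $K_0\supset k'$ and so $\trdeg_{l_0}(K_0)=\trdeg_{l_0}(k')+\trdeg_{k'}(K_0)$; what is really needed is a further descent to a field like your $L=l(y_1,\dots,y_N)$ that does not contain all of $k'$, and verifying that descent requires putting the $k'$-coefficients of the structure constants into $l_0$ as well — exactly the step you supply via the basis $b_1,\dots,b_n$ and the $c_{ij}^h$. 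So your proposal is correct and, in part (c), fills in a detail the paper's own write-up glosses over.
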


\begin{proof}
(a) Suppose $A$ descends to a subfield $K_0 \subset K$ containing $k$ such that $\trdeg_k(K_0)$ as small
as possible, i.e., $\trdeg_k(K_0) = \ed_k(A)$. Then $A$ also descends to $k' K_0$, where the compositum is 
taken in $K$. Now $\ed_{k'}(A) \leqslant \trdeg_{k'}(k' K_0) \leqslant \trdeg_k(K_0) = \ed_k(A)$.

(b) In view of part (a), it suffices to show that $\ed_{k}(A) \leqslant \ed_{k'}(A)$.
Indeed, $A$ descends to some intermediate field $k' \subset K_0' \subset K$ such that 
$\trdeg_{k'} (K_0') = \ed_{k'}(A)$. If $k'$ is algebraic over $k$, then 
$\ed_{k}(A) \leqslant \trdeg_k(K_0') = \trdeg_{k'}(K_0) = \ed_{k'}(A)$.

(c) By Lemma~\ref{lem.algebras2}, $A/K$ descends to some $A_0/K_0$ such that
$k' \subset K_0 \subset K$, $\ed_{k'}(A) = \ed_{k'}(A_0) = \trdeg_{k'}(K_0)$ and $K_0$ is is generated by finitely many elements 
over $k'$, say $K_0 = k'(a_1, \ldots, a_m)$.

Let $x_1, \ldots, x_m$ be independent variables over $k'$. 
For each subset $I = \{ i_1, \ldots, i_r \}  \subset \{ 1, 2, \ldots, m \}$, such that the elements 
$a_{i_1}, \ldots, a_{i_r}$ are algebraically dependent over $k'$, choose a polynomial $0 \neq p_I(x_{i_1}, \ldots, x_{i_r}) 
\in k'[x_{i_1}, \ldots, x_{i_r}]$ such that $p_I(a_{i_1}, \ldots, a_{i_r}) = 0$.
Now choose an intermediate field $k \subset l_0 \subset k'$ such that $l_0$ is generated by the coefficients of 
the polynomials $p_I$ for every such $I$.
With this choice of $l_0$, any subset of $\{ a_1, \ldots, a_m \}$ which is algebraically dependent over $k'$
remains algebraically dependent in $l_0$. In other words, $\trdeg_{l_0}(K_0) = \trdeg_{k'}(K_0)$ and thus
\begin{equation} \label{e.l_0} \ed_{l_0} (A) \leqslant \trdeg_{l_0}(K_0) = \trdeg_{k'}(L_0) = \ed_{k'}(A). 
\end{equation}
By part (a), $\ed_{l_0}(A) \geqslant \ed_{l}(A) \geqslant \ed_{k'}(A)$ for any intermediate field $l_0 \subset l \subset k'$.
Now~\eqref{e.l_0} tells us that both of these inequalities are, in fact, equalities, as desired.
\end{proof}

\section{Preliminaries on field extensions}
\label{sect.prel2}

We will be particularly interested in the case where the finite-dimensional $K$-algebra $A$ is itself a field. 
In this case we will usually use the letter $L$ in place of $A$ and 
write $\ed_k(L/K)$ in place of $\ed_k(A)$.

\begin{lem} \label{lem.Galois} Let $k \subset K \subset L$ be field extensions such that $[L: K] < \infty$.

\smallskip
(a) If $K^{\rm sep}$ is the separable closure of $K$ in $L$, then $\ed_k(K^{\rm sep}/K) \leqslant \ed_k(L/K)$ and
$\ed_k(L/K^{\rm sep}) \leqslant \ed_k(L/K)$.

\smallskip
(b) If $L$ is separable over $K$, and $L^{\rm norm}$ is the normal closure of $L$, then $\ed_k(L/K) = \ed_k(L^{\rm norm}/K)$.

\smallskip
(c) Suppose $K \subset E \subset L$ is an intermediate extension. If $E$ is separable over $K$, then $\ed_k(E/K) \leqslant \ed_k(L/K)$.  
\end{lem}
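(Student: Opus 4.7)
To prove (a), I would start with an optimal descent $L_0/K_0$ of $L/K$ with $\trdeg_k(K_0)=\ed_k(L/K)$ (by Lemma~\ref{lem.algebras2}), and let $M_0$ denote the separable closure of $K_0$ inside $L_0$. The crux is that $M_0\otimes_{K_0}K$ is precisely $K^{\rm sep}$, the separable closure of $K$ in $L$. Since $M_0/K_0$ is separable algebraic, $M_0\otimes_{K_0}K$ is \'etale over $K$; the flat base change $M_0\otimes_{K_0}K\hookrightarrow L_0\otimes_{K_0}K=L$ embeds it as a subring of a field, so it is a domain, and an \'etale $K$-algebra that is a domain is a field. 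An iterated Frobenius calculation on elementary tensors shows that $L$ is purely inseparable over $M_0\otimes_{K_0}K$, forcing the identification $M_0\otimes_{K_0}K=K^{\rm sep}$. Consequently $K^{\rm sep}/K$ descends to $M_0/K_0$ and $L/K^{\rm sep}$ descends to $L_0/M_0$; since $M_0/K_0$ is algebraic, $\trdeg_k(M_0)=\trdeg_k(K_0)$, and both inequalities in (a) follow.

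For the hard inequality $\ed_k(L^{\rm norm}/K)\leqslant\ed_k(L/K)$ in (b), I would again take an optimal descent $L_0/K_0$ of $L/K$. Fix an algebraic closure $\Omega$ of $K$ containing $L^{\rm norm}$, and extend $K_0\hookrightarrow K$ to an embedding of an algebraic closure of $K_0$ into $\Omega$. Form the Galois closure $L_0^{\rm norm}/K_0$ inside $\Omega$ and set $K_1:=K\cap L_0^{\rm norm}$. The main claim is $L^{\rm norm}=L_0^{\rm norm}\otimes_{K_1}K$. The inclusion $L^{\rm norm}\subset K\cdot L_0^{\rm norm}$ is automatic since $K\cdot L_0^{\rm norm}/K$ is Galois (compositum of a Galois extension with an arbitrary extension) and contains $L$. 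The reverse inclusion $L_0^{\rm norm}\subset L^{\rm norm}$ is the main obstacle; it uses critically that $L=L_0\otimes_{K_0}K$ is a single field, for this irreducibility forces the restriction map on Galois groups to have image transitive on the $K_0$-embeddings of $L_0$ into $\Omega$, so every $K_0$-conjugate of $L_0$ in $\Omega$ is already a $K$-conjugate and therefore lies in $L^{\rm norm}$. The identification $K\cdot L_0^{\rm norm}=L_0^{\rm norm}\otimes_{K_1}K$ is then the standard linear disjointness of a Galois extension from another extension over their intersection. Since $K_1/K_0$ is algebraic, $\trdeg_k(K_1)=\trdeg_k(K_0)$, and the hard inequality in (b) follows.

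For (c), I first reduce to the case where $L/K$ is separable: since $E\subset K^{\rm sep}$ and (a) yields $\ed_k(K^{\rm sep}/K)\leqslant\ed_k(L/K)$, one may replace $L$ by $K^{\rm sep}$. Now rerun the construction of (b) to obtain $L^{\rm norm}=L_0^{\rm norm}\otimes_{K_1}K$, both sides Galois with the same Galois group $G$. Since $E\subset L\subset L^{\rm norm}$ is an intermediate field of the Galois extension $L^{\rm norm}/K$, the Galois correspondence applied to this base-changed Galois extension produces a unique intermediate field $E_0=(L_0^{\rm norm})^H$ of $L_0^{\rm norm}/K_1$ (where $H=\Gal(L^{\rm norm}/E)\leqslant G$) satisfying $E=E_0\otimes_{K_1}K$. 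This descends $E/K$ to $E_0/K_1$, giving $\ed_k(E/K)\leqslant\trdeg_k(K_1)=\trdeg_k(K_0)=\ed_k(L/K)$. Finally, the easy inequality $\ed_k(L/K)\leqslant\ed_k(L^{\rm norm}/K)$ in (b) follows from (c) applied to the intermediate field $L$ of the Galois extension $L^{\rm norm}/K$.
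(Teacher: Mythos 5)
Your proof is correct, and its skeleton matches the paper's: for (a) one descends and passes to the separable closure of the descent; for (c) one reduces to the separable case via (a), passes to the Galois closure, and takes $H$-fixed points of a Galois-equivariant descent. The difference is that the paper outsources the two hard steps to Buhler--Reichstein: part (b) is quoted as \cite[Lemma 2.3]{br}, and the existence of a descent $L_0/K_0$ with $L_0$ a $G$-invariant subfield of $L$ (the key input to (c)) is quoted as \cite[Lemma 2.2]{br}. You instead re-prove both from scratch by taking the Galois closure $L_0^{\rm norm}$ of an arbitrary optimal descent $L_0/K_0$, setting $K_1=K\cap L_0^{\rm norm}$, and using linear disjointness of a Galois extension from an arbitrary extension over their intersection to identify $L^{\rm norm}$ with $L_0^{\rm norm}\otimes_{K_1}K$; the transitivity argument (irreducibility of the minimal polynomial of a primitive element over $K$ forces every $K_0$-conjugate of $L_0$ to lie in $L^{\rm norm}$) is exactly what makes this work, and you also supply the verification, omitted in the paper, that the separable closure commutes with the descent in (a). The payoff of your route is self-containedness; the cost is length, plus one small point you pass over silently, namely that $L_0/K_0$ is itself separable when $L/K$ is (needed to form the Galois closure of $L_0$) --- this does hold, since $L_0\otimes_{K_0}K\simeq L$ forces the minimal polynomial over $K_0$ of any element of $L_0$ to coincide with its minimal polynomial over $K$, but it deserves a sentence.
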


\begin{proof} (a) Suppose $L/K$ descends to $L_0/K_0$. Denote the separable closure of $K_0$ in $L_0$ by $(K_0)^{\rm sep}$.
Then $K^{\rm sep}/K$ descends to $K_0^{\rm sep}/K_0$, $L/K^{\rm sep}$ descends to $L_0/(K_0)^{\rm sep}$ and part (a) follows.

(b) is proved in \cite[Lemma 2.3]{br}.

(c) In view of part (a), it suffices to show that $\ed_k(E/K) \leqslant \ed_k(K^{\rm sep}/K)$. 
In other words, we may replace $L$ by $K^{\rm sep}$ and thus assume without loss of generality that $L$ is separable over $K$. By (b), we may further replace  $L$ by its normal
closure over $K$ and thus assume that $L$ is Galois over $K$. Then $E = L^H$, where $H$ is a subgroup of $G = \operatorname{Gal}(L/K)$.
By~\cite[Lemma 2.2]{br}, $L/K$ descends to some $L_0/K_0$, where $k \subset K_0 \subset K$, $\trdeg_k(K_0) = \ed_k(L/K)$, and
$L_0$ is a $G$-invariant subfield of $L$. Then $E/K$ descends to $L_0^H/K_0$. This tells us that $\ed_k(E/K) \leqslant \trdeg_k(K_0) = \ed_k(L/K)$, as desired.
\end{proof} 

We will say that a field extension $L/K$ is simple if $[L:K] < \infty$ and $L$ is generated by one element over $K$.
In other words, $L \simeq K[x]/(f(x))$, where $f(x) \in K[x]$ is an irreducible polynomial over $K$.
By the Primitive Element Theorem, every finite separable extension is simple.

\begin{lem} \label{lem.descent1} Suppose a finite field extension $L/K$ descends to $L_0/K_0$. Then $L/K$ is simple if and only if $L_0/K_0$ is simple.
\end{lem}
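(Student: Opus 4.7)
The plan is to use the classical characterization (due to Steinitz) that a finite field extension $M/F$ is simple if and only if there are only finitely many intermediate fields between $F$ and $M$. Under this equivalence, the lemma reduces to comparing intermediate fields of $L_0/K_0$ with those of $L/K$, which I would handle by producing an injection in one direction and checking the other direction directly.

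For the easy direction, assume $L_0 = K_0(\alpha)$, so that $L_0 \simeq_{K_0} K_0[x]/(g(x))$ for some irreducible $g(x) \in K_0[x]$. Tensoring with $K$ over $K_0$ gives $L \simeq_K K[x]/(g(x))$; since $L$ is a field, $g(x)$ must remain irreducible in $K[x]$, and therefore $L = K(\alpha \otimes 1)$ is simple.

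For the converse, assume $L/K$ is simple and define
\[
\Phi\colon \{\text{intermediate fields of } L_0/K_0\}\longrightarrow \{\text{intermediate fields of } L/K\},\qquad E_0\mapsto E_0\otimes_{K_0}K,
\]
viewed inside $L = L_0\otimes_{K_0} K$ via the inclusion $E_0 \hookrightarrow L_0$, which remains injective after tensoring with $K$ by flatness of field extensions. Each $\Phi(E_0)$ is a finite-dimensional $K$-subring of the field $L$, hence itself a field. The main step is to verify that $\Phi$ is injective; for this I would pick a $K_0$-basis $e_1,\dots,e_r$ of $E_0$ and extend it to a $K_0$-basis $e_1,\dots,e_n$ of $L_0$, so that $e_1\otimes 1,\dots,e_n\otimes 1$ is a $K$-basis of $L$. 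Comparing coordinates in this basis shows that $E_0 = L_0 \cap \Phi(E_0)$ inside $L$, so $E_0$ is recoverable from $\Phi(E_0)$. Since $L/K$ is simple, its set of intermediate fields is finite by Steinitz's theorem, and injectivity of $\Phi$ forces the same for $L_0/K_0$, making $L_0/K_0$ simple.
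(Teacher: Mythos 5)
Your proof is correct, but it takes a genuinely different route from the paper. The paper argues geometrically for the nontrivial direction: it identifies $L_0$ with the set of $K_0$-points of $\bbA^n$, notes that the locus of elements of degree $\leqslant n-1$ is a closed subscheme $X$ defined over $K_0$ (the vanishing of a determinant), uses simplicity of $L/K$ to conclude $X \subsetneq \bbA^n$, and then produces a primitive element of $L_0/K_0$ from the density of $K_0$-points in the nonempty open complement when $K_0$ is infinite, treating finite $K_0$ separately via separability. You instead invoke Steinitz's characterization of simple finite extensions as those with only finitely many intermediate fields, and build an injection $E_0 \mapsto E_0 \otimes_{K_0} K$ from intermediate fields of $L_0/K_0$ into those of $L/K$. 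Your verifications are sound: the image is a finite-dimensional $K$-subalgebra of the field $L$, hence a field containing $K$, and injectivity follows from $E_0 = L_0 \cap \Phi(E_0)$, which your adapted-basis computation establishes; Steinitz's theorem holds for arbitrary finite extensions, so no separability is needed. The trade-off is that your argument delegates the finite-versus-infinite base field dichotomy to Steinitz's theorem (a classical but nontrivial black box), whereas the paper's argument is self-contained modulo the elementary fact that a nonempty Zariski-open subset of $\bbA^n$ over an infinite field has a rational point.
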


\begin{proof} One direction is obvious: if $L_0 = K_0(a)$ is simple, then $L = K(a)$ is also simple.

To prove the converse, assume that $L/K$ is simple and set $n = [L:K] = [L_0: K_0]$.
If $K_0$ is a finite field, then so is $L_0$. In this case $L_0/K_0$ is separable and hence, simple. 
Thus we may assume that $K_0$ is infinite. It suffices to show that $L_0$ contains an element of degree $n$ over $K_0$. 
View $L_0$ as the set of $K_0$-points of the $n$-dimensional affine space $\bbA^n$, and
$L$ as the set of $K$-points of $\bbA^n$. Let $X \subset \bbA^n$ be the subscheme of $\bbA^n$ determined by the condition that
for $x \in \bbA^n_{K_0}$, $1, x, \ldots, x^{n-1}$ are linearly dependent. Here multiplication in $\bbA^n_{K_0}$ comes from identifying
$\bbA^n(K_0)$ with $L_0$. It is easy to see that
$X$ is a closed subscheme of $\bbA^n$ (given by the vanishing of a single $n \times n$ determinant) defined over $K_0$. 
Then $X(K_0)$ is the set of elements of $L_0$ of degree $\leqslant n-1$ over $K_0$ 
and $X(K)$ is the set of elements of $L$ of degree $\leqslant n -1$ over $K$.
We know that $L/K$ is simple; hence, $X \subsetneq \bbA^n$. That is, $U = \bbA^n \setminus X$ is a non-empty Zariski 
open subscheme of $\bbA^n$ defined over $K_0$. Since $K_0$ is an infinite field, we conclude that
$U(K_0) \neq \emptyset$. In other words, $L_0/K_0$ is simple, as claimed. 
\end{proof}

\begin{lem} \label{lem.ed1} Let $k \subset K \subset L$ be fields such that $L/K$ is simple.
Assume $K'/K$ is another field extension (not necessarily finite), 
and $L' = K' L$ be a compositum of $K'$ and $L$ over $K$. 
Then $\ed_k(L'/K') \leqslant \ed_k(L/K)$.
\end{lem}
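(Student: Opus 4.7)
The plan is to take an optimal descent of $L/K$, produced by Lemma~\ref{lem.algebras2}, and upgrade it to a descent of $L'/K'$ without adding transcendence over $k$. Concretely, I would descend $L/K$ to $L_0/K_0$ with $k \subset K_0 \subset K$ finitely generated over $k$ and $\trdeg_k(K_0) = \ed_k(L/K)$. By Lemma~\ref{lem.descent1}, $L_0/K_0$ is simple, so I may write $L_0 = K_0(a)$ with minimal polynomial $f_0(x) \in K_0[x]$; then $L \simeq K(a)$ with the same minimal polynomial $f_0$ over $K$.

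Working inside the chosen compositum, one has $L' = K' L = K'(a)$, and the minimal polynomial $g(x) \in K'[x]$ of $a$ over $K'$ divides $f_0$ in $K'[x]$. I would let $K_0'$ be the subfield of $K'$ generated over $K_0$ by the (finitely many) coefficients of $g$. The crux of the argument is that the roots of $g$, being among the roots of $f_0 \in K_0[x]$, are algebraic over $K_0$; hence the coefficients of $g$, which are up to sign the elementary symmetric functions of those roots, are also algebraic over $K_0$. Therefore $K_0'/K_0$ is algebraic and $\trdeg_k(K_0') = \trdeg_k(K_0) = \ed_k(L/K)$.

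It then remains to exhibit the descent of $L'/K'$ to $L_0'/K_0'$. Since $g$ is irreducible over $K'$ and has coefficients in $K_0' \subset K'$, any factorization of $g$ over $K_0'$ would persist over $K'$, so $g$ is already irreducible over $K_0'$. Thus $L_0' := K_0'[x]/(g(x))$ is a field and
\[
L_0' \otimes_{K_0'} K' \;\simeq\; K'[x]/(g(x)) \;\simeq\; L',
\]
which gives $\ed_k(L'/K') \leqslant \trdeg_k(K_0') = \ed_k(L/K)$, as required. The only subtle step is the middle one: ensuring that enlarging from $K_0$ to $K_0'$ adds no transcendence over $k$. This is precisely where the simplicity of $L/K$ is essential, since it lets us encode the whole descent in a single polynomial $g$ whose coefficients are forced to be algebraic over $K_0$.
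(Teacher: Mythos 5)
Your proof is correct and follows essentially the same route as the paper: descend to a simple $L_0=K_0(a)$, observe that $L'\simeq K'[x]/(g)$ for the irreducible factor $g$ of the minimal polynomial over $K'$, and use the elementary symmetric functions of the roots to see that the coefficients of $g$ are algebraic over $K_0$, so the descent field gains no transcendence. The paper descends to the full algebraic closure of $K_0$ in the larger field rather than just to $K_0$(coefficients of $g$), but this is an immaterial difference.
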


Note that Lemma~\ref{lem.ed1} is immediate from the definition of $\ed_k(L/L)$
in the case, where $L' \simeq L \otimes_K K'$ or equivalently, $[L':K'] = [L:K]$. 
The only (slight) complication arises from the fact that $[L':K']$ may be smaller than $[L:K]$.

\begin{proof} Set $n = [L: K]$ and $d = \ed_k(L/K)$. Then $L/K$ descends to some intermediate field $k \subset K_0 \subset K$ 
such that $\trdeg_k(K_0) = d$. That is, there exists a field extension $L_0/K_0$ such that $L \simeq_K L_0 \otimes_{K_0} K$,
where $\simeq_{K_0}$ denotes an isomorphism of fields over $K_0$.

By Lemma~\ref{lem.descent1}, $L_0/K_0$ is simple. That is,
$L_0 \simeq_{K_0} K_0[x]/(f(x))$, where $f(x) \in K_0[x]$ is a polynomial of degree $n$, irreducible over $K_0$. 
Then $L \, \simeq_K \, K[x]/(f(x))$. 
Now let $f(x) = f_1(x) \ldots f_r(x)$ be an irreducible decomposition of $f(x)$ over $K'$. A compositum $L'$ of $L$ and $K'$ is isomorphic to $K'[x]/(f_i(x))$ for some $i$, say, $L' \,  \simeq_{K'} \, K'[x]/(f_1(x))$. Denote the degree of $f_1(x)$ by $n_1 = [L': K']$ and the roots of
$f_1$ in the algebraic closure of $K$ by $\alpha_1, \ldots, \alpha_{n_1}$. Since each $\alpha_i$ is a root of $f(x) \in K_0[x]$, each $\alpha_i$ 
algebraic over $K_0$. Hence, the coefficients of $f_1(x)$, being elementary 
symmetric polynomials in $\alpha_1, \ldots, \alpha_{n_1}$, are also algebraic over $K_0$. This shows that
$f_1(x) \in K_0^{\rm alg}[x]$, where $K_0^{alg}$ is the algebraic closure of $K_0$ in $K$. In other words, $L'/K'$ descends to 
$K_0^{\rm alg}$. Consequently, 
\[ \ed_k(L'/K')  \leqslant \trdeg_k(K_0^{\rm alg}) = \trdeg_k(K_0) = d = \ed_k(L/K), \]
as desired.
\end{proof}

\begin{lem} \label{lem.ed2} Let $k \subset K \subset L$ be fields such that $[L : K]< \infty$. Then there exist
intermediate extensions $K = K^{(0)} \subset K^{(1)} \subset \ldots \subset K^{(r)} = L$ such that
$K^{(i)}/K^{(i-1)}$ is simple and $\ed_k(K^{(i)}/K^{(i-1)}) \leqslant \ed_k(L/K)$
for every $i = 1, \ldots, r$.
\end{lem}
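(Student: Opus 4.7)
The plan is to descend $L/K$ to a small base, build the tower using generators of the descended extension, and then compare the original tower to the descended tower using Lemma~\ref{lem.ed1}. Concretely, set $d := \ed_k(L/K)$, and by Lemma~\ref{lem.algebras2} choose a descent $L_0/K_0$ of $L/K$, where $K_0 \subset K$, $\trdeg_k(K_0) = d$, and $L_0 \subset L$ is a $K_0$-subfield with the natural multiplication map $L_0 \otimes_{K_0} K \to L$ an isomorphism. Pick generators $\alpha_1, \dots, \alpha_r \in L_0$ so that $L_0 = K_0(\alpha_1, \dots, \alpha_r)$; this is possible because $L_0/K_0$ is a finite extension.

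Next, I would define $L_0^{(i)} := K_0(\alpha_1, \dots, \alpha_i)$ and $K^{(i)} := K(\alpha_1, \dots, \alpha_i)$ for $0 \leqslant i \leqslant r$. Then $K^{(0)} = K$, and since $K \cdot L_0 = L$ (compositum inside $L$), we have $K^{(r)} = L$. Each step $K^{(i)}/K^{(i-1)}$ is simple, being generated by $\alpha_i$, and likewise $L_0^{(i)}/L_0^{(i-1)}$ is simple.

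To bound $\ed_k(K^{(i)}/K^{(i-1)})$, note that since $K \supset K_0$ we have $K^{(i-1)} \supset L_0^{(i-1)}$, so $K^{(i-1)}$ is a field extension of $L_0^{(i-1)}$, and the compositum $K^{(i-1)} \cdot L_0^{(i)}$ inside $L$ equals $K^{(i-1)}(\alpha_i) = K^{(i)}$. Applying Lemma~\ref{lem.ed1} to the simple extension $L_0^{(i)}/L_0^{(i-1)}$ with $K' := K^{(i-1)}$ yields
\[ \ed_k(K^{(i)}/K^{(i-1)}) \leqslant \ed_k(L_0^{(i)}/L_0^{(i-1)}). \]
Since $L_0^{(i)}/L_0^{(i-1)}$ descends trivially to itself over $L_0^{(i-1)}$, and $L_0^{(i-1)}$ is algebraic over $K_0$, we have $\ed_k(L_0^{(i)}/L_0^{(i-1)}) \leqslant \trdeg_k(L_0^{(i-1)}) = \trdeg_k(K_0) = d$. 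Combining the two bounds gives $\ed_k(K^{(i)}/K^{(i-1)}) \leqslant d = \ed_k(L/K)$, as required.

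I do not expect a substantive obstacle; the argument is essentially a single application of Lemma~\ref{lem.ed1} at each step of a tower built from generators of the descended field. The only point requiring care is the verification that the intermediate compositums line up correctly, namely that $K^{(i-1)} \supset L_0^{(i-1)}$ (which is immediate from $K \supset K_0$) and that the full tower reaches $L$ at the top (which uses the isomorphism $L_0 \otimes_{K_0} K \xrightarrow{\sim} L$).
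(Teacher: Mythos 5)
Your proof is correct and follows essentially the same route as the paper: descend $L/K$ to $L_0/K_0$ with $\trdeg_k(K_0)=\ed_k(L/K)$, choose generators $\alpha_1,\dots,\alpha_r$ of $L_0$ over $K_0$, and build the tower of composita $K^{(i)}=K(\alpha_1,\dots,\alpha_i)$. The only cosmetic difference is that the paper justifies the bound at each step by observing that $K^{(i)}/K^{(i-1)}$ descends to $K_0^{(i)}/K_0^{(i-1)}$, whereas you route it through Lemma~\ref{lem.ed1} applied to the compositum; both yield $\ed_k(K^{(i)}/K^{(i-1)})\leqslant \trdeg_k(K_0^{(i-1)})=\ed_k(L/K)$.
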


\begin{proof} Set $d = \ed_k(L/K)$. By definition, $L/K$ descends to $L_0/K_0$, where $k \subset K_0 \subset K$ and $\trdeg_k(K_0) = d$. 
Let $\alpha_1, \ldots, \alpha_r$ be
generators for $L_0$ over $K_0$ and set $K_0^{(i)} = K_0(\alpha_1, \ldots, \alpha_i)$ and $K^{(i)} = K(\alpha_1, \ldots, \alpha_i)$.
We obtain the following diagram, where
 \[ \xymatrix{ K = K^{(0)}  \ar@{-}[d] \; \ar@{^{(}->}[r] &  K^{(1)}  \ar@{-}[d] \; \ar@{^{(}->}[r] & 
 \ldots   \; \ar@{^{(}->}[r] &   K^{(r)} = L \ar@{-}[d]
 \\
K_0 = K_0^{(0)}  \; \ar@{^{(}->}[r] &  K_0^{(1)}  \; \ar@{^{(}->}[r] & 
\ldots   \; \ar@{^{(}->}[r] &  K_0^{(r)}  = L_0 }
 \]
By our construction, the extension $K^{(i)}/K^{(i-1)}$ is simple for each $i = 1, \ldots r$. Moreover, 
\[ \ed_k(K^{(i)}/K^{i-1}) \leqslant \trdeg_k(K_0^{(i-1)}) = d \]
because $K^{(i)}/K^{(i-1)}$ descends to $K_0^{(i)}/K_0^{(i-1)}$ for each $i$.
\end{proof}

\section{The level of a finite field extension}
\label{sect.level}

We now define the level of a field extension, following Dixmier~\cite[Section 2]{dixmier}. 

\begin{defn} \label{def.level} Let $k$ be a base field, $K$ be a field containing $k$, and $L/K$ be a field extension of finite degree.
I will say that $L/K$ is of level $\leqslant d$ if there exists a diagram 
of field extensions 
\begin{equation} \label{e.tower}
  \xymatrix{ & K_m \ar@{-}[d] & \\ 
    L \ar@{-}[ur] & \vdots \ar@{-}[d] &   \\
    & K_2 \ar@{-}[d] & \\
    & K_1 \ar@{-}[d] & \\
    & K_0 \ar@{-}[luuu] \ar@{=}[r] & K  }
\end{equation}
such that $[K_{i} : K_{i - 1}] < \infty$ and $\ed_k(K_{i}/K_{i - 1}) \leqslant d$ for every $i = 1, \ldots, m$. 
The level of $L/K$ is the smallest such $d$; I will denote it by $\lev_k(L/K)$. 
\end{defn}

\begin{remarks} \label{rem.def-level}
(1) The same notion was introduced by Brauer~\cite{brauer} (in characteristic $0$) under the name of resolvent degree. In this paper we will reserve the term ``resolvent degree" for the resolvent degree of an object of a functor; see Definition~\ref{def.rd}. 
If $L/K$ is a finite separable extension,  then we may view $L/K$ as an object of the functor of \'etale algebras, and the two notions coincide; see Example~\ref{ex.etale}.

(2) (cf.~\cite[Lemma 2.5.3]{farb-wolfson}) If $k \subset K \subset L' \subset L$ and $[L: K] < \infty$, then $\lev_k(L'/K) \leqslant \lev_k(L/K)$. 
Indeed, any tower~\eqref{e.tower} showing that $\lev_k(L/K) \leqslant d$
also shows that $\lev_k(L'/K) \leqslant d$.

\smallskip
(3) (cf.~\cite[Lemma 2.5.1]{farb-wolfson}) Taking $m = 1$ and $K_1 = L$ in~\eqref{e.tower}, we see that $\lev_k(L/K) \leqslant \ed_k(L/K)$. 

\smallskip
(iv) We may assume without loss of generality that each extension $K_i/K_{i-1}$ in the tower~\eqref{e.tower}
is simple. Indeed, by Lemma~\ref{lem.ed2}, we may replace $K_i/K_{i-1}$ by a sequence of simple extensions without 
increasing the essential dimension.

(5) Definition~\ref{def.level} formalizes the classical notion of composition of algebraic functions. 
If $K$ is a field of rational functions on some algebraic variety $X$ defined over $k$, then it 
is natural to think of $K_1$ as being generated by algebraic (multi-valued) functions on $X$ in $\leqslant \ed_k(K_1/K)$ variables, 
and $K_i$ as being generated by compositions of $i$ algebraic functions on $X$ in $\leqslant \lev_k(L/K)$ variables. 

(6) No examples where $\lev_k(L/K) > 1$ are known. 
\end{remarks} 

\begin{lem} \label{lem.base-change} 
Assume that $k \subset k' \subset K \subset L$ 
are fields and $[L: K] < \infty$. Then 

\smallskip
(a) (cf. \cite[Lemma 2.5.2]{farb-wolfson}) $\lev_k(L/K) \geqslant \lev_{k'}(L/K)$. 

\smallskip
(b) Moreover, equality holds if $k'$ is algebraic over $k$. 

\smallskip
(c) Furthermore, there always exists an intermediate field $k \subset l_0 \subset k'$
such that $l_0$ is finitely generated over $k$ and $\lev_{l}(L/K) = \lev_{k'}(L/K)$ for every field $l$ between $l_0$ and $k'$.
\end{lem}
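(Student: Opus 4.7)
The plan is to derive each of the three parts directly from the corresponding part of Lemma~\ref{lem.ed0}, by applying that lemma termwise to the steps of a tower as in~\eqref{e.tower}. Since the level is defined via such towers and the essential dimension of each step, everything should reduce to a clean bookkeeping argument once the right tower is fixed.

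For part (a), I would fix $d = \lev_k(L/K)$ and a witnessing tower $K = K_0 \subset K_1 \subset \cdots \subset K_m$ with $L \subset K_m$ and $\ed_k(K_i/K_{i-1}) \leqslant d$ for every $i$. Applying Lemma~\ref{lem.ed0}(a) to each finite-dimensional $K_{i-1}$-algebra $K_i$, with the base field enlarged from $k$ to $k'$, gives $\ed_{k'}(K_i/K_{i-1}) \leqslant \ed_k(K_i/K_{i-1}) \leqslant d$, so the same tower witnesses $\lev_{k'}(L/K) \leqslant d$. For part (b) the strategy is symmetric: start with a tower witnessing $\lev_{k'}(L/K) \leqslant d$, and invoke Lemma~\ref{lem.ed0}(b), which requires $k'$ algebraic over $k$, to conclude $\ed_k(K_i/K_{i-1}) = \ed_{k'}(K_i/K_{i-1}) \leqslant d$ for each $i$. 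This gives $\lev_k(L/K) \leqslant \lev_{k'}(L/K)$, and combined with (a) yields equality.

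For part (c), set $d = \lev_{k'}(L/K)$ and fix a tower witnessing this bound. For each $i = 1, \ldots, m$, Lemma~\ref{lem.ed0}(c), applied to the $K_{i-1}$-algebra $K_i$, produces an intermediate field $l_0^{(i)}$ with $k \subset l_0^{(i)} \subset k'$, finitely generated over $k$, such that $\ed_l(K_i/K_{i-1}) = \ed_{k'}(K_i/K_{i-1})$ whenever $l_0^{(i)} \subset l \subset k'$. I would then take $l_0$ to be the compositum inside $k'$ of $l_0^{(1)}, \ldots, l_0^{(m)}$; since this is a compositum of finitely many finitely generated extensions of $k$, it is itself finitely generated over $k$. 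For any intermediate field $l_0 \subset l \subset k'$ we have $l_0^{(i)} \subset l$ for every $i$, so $\ed_l(K_i/K_{i-1}) \leqslant d$, and the chosen tower certifies $\lev_l(L/K) \leqslant d$. The reverse inequality $\lev_l(L/K) \geqslant \lev_{k'}(L/K)$ is a consequence of part (a) applied to the inclusion $l \subset k'$, giving the desired equality.

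The whole proof is essentially bookkeeping on top of Lemma~\ref{lem.ed0}, and I do not expect a serious obstacle. The only slightly delicate point is in part (c), where Lemma~\ref{lem.ed0}(c) must be invoked separately for each step of the tower and the resulting finitely many fields $l_0^{(i)}$ assembled into a single $l_0$ that still works uniformly for every step; the finiteness of the tower makes this straightforward.
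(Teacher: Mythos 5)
Your proposal is correct and is essentially identical to the paper's proof: both reduce each part to the corresponding part of Lemma~\ref{lem.ed0} applied to every step of a witnessing tower, and in part (c) both take $l_0$ to be the compositum in $k'$ of the finitely generated fields produced for the individual steps. No issues.
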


\begin{proof} Choose a tower $K = K_0 \subset K_1 \subset \ldots \subset K_m$, as in Definition~\ref{def.level}
and apply parts (a), (b) and (c) of Lemma~\ref{lem.ed0}, respectively,
to each intermediate extension $K_i/K_{i-1}$. In part (c), let $l_i/k$ be a finitely generated field extension
obtained by applying Lemma~\ref{lem.ed0}(c) to 
$K_i/K_{i-1}$. Now set $l_0$ to be the compositum of $l_1, \ldots, l_m$ in $k'$ over $k$.
\end{proof}

\begin{lem} \label{lem.lev1} Assume that $k \subset K \subset L$ are fields and $[L:K] < \infty$ and let $K \subset K'$ 
be another field extension (not necessarily finite).
Then $\lev_k(K'L/ K') \leqslant \lev_k(L/K)$. Here $K' L$ denotes an arbitrary compositum of $K'$ and $L$ over $K$.
\end{lem}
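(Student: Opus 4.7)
The plan is to lift a level-realizing tower for $L/K$ to a tower over $K'$ by taking composita with $K'$, and to control the essential dimension of each step via Lemma~\ref{lem.ed1}. The output is exactly what Lemma~\ref{lem.ed1} is designed for, so the argument is essentially one iteration of that lemma applied along each rung of a carefully chosen tower.

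Set $d = \lev_k(L/K)$ and fix a tower
\[ K = K_0 \subset K_1 \subset \cdots \subset K_m, \qquad L \subseteq K_m, \]
with $[K_i : K_{i-1}] < \infty$ and $\ed_k(K_i/K_{i-1}) \leqslant d$ for every $i$, as in Definition~\ref{def.level}. By Remark~\ref{rem.def-level}(iv) I may further assume that every step $K_i/K_{i-1}$ is \emph{simple}. This is the key preparation step: simplicity is exactly the hypothesis Lemma~\ref{lem.ed1} requires, and without it there is no evident way to propagate the essential-dimension bound to the base change.

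Next I choose a common ambient field. Fix the given compositum $K'L$ and an algebraic closure $\Omega$ of $K'L$. Since $K_m/L$ is finite and $\Omega$ is algebraically closed, the inclusion $L \hookrightarrow \Omega$ extends to an embedding $K_m \hookrightarrow \Omega$. Inside $\Omega$ set $K'_i := K' \cdot K_i$. Then
\[ K' = K'_0 \subseteq K'_1 \subseteq \cdots \subseteq K'_m \]
is a chain of finite extensions with $K'L \subseteq K'_m$. For each $i$, apply Lemma~\ref{lem.ed1} to the base field $K_{i-1}$, the simple finite extension $K_i/K_{i-1}$, and the further extension $K'_{i-1}/K_{i-1}$; since $K'_{i-1} K_i = K' K_{i-1} K_i = K' K_i = K'_i$ is a compositum of $K'_{i-1}$ and $K_i$ over $K_{i-1}$, the lemma yields
\[ \ed_k(K'_i/K'_{i-1}) \leqslant \ed_k(K_i/K_{i-1}) \leqslant d. \]
The chain $K'_0 \subseteq \cdots \subseteq K'_m$ therefore satisfies the conditions of Definition~\ref{def.level} for the extension $K'L/K'$ with level $\leqslant d$, finishing the proof.

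I do not anticipate a real obstacle: the reduction to simple steps and the extension of embeddings into $\Omega$ are routine. The one point worth flagging is that the argument truly depends on the simplicity reduction of Remark~\ref{rem.def-level}(iv); Lemma~\ref{lem.ed1} is stated only for simple extensions because the degree can drop strictly in the compositum, and without passing through simple steps one could not conclude the desired inequality at each rung.
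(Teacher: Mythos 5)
Your proof is correct and follows essentially the same route as the paper's: reduce to a tower with simple rungs via Remark~\ref{rem.def-level}(iv), form the composita $K'_i = K'K_i$ in a common overfield, and apply Lemma~\ref{lem.ed1} rung by rung. Your choice to embed everything in an algebraic closure of the given compositum $K'L$ is if anything slightly more careful than the paper's phrasing about which compositum $K'K_m$ is taken.
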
 

\begin{proof} Set $d = \lev_k(L/K)$ and choose
a tower $K = K_0 \subset K_1 \subset \ldots \subset K_m$ as in~Definition~\ref{def.level}. By Remark~\ref{rem.def-level}(4)
we may assume that each intermediate extension $K_i/K_{i-1}$ is simple. Now consider the tower
\[ K' = K_0' \subset K_1' \subset \ldots \subset K_m' , \]
Here $K_m' = K' K_m$ is some compositum of $K'$ and $K_m$, and for $i = 0, \ldots, m-1$,
$K_i' = K' K_i$ is the compositum of $K'$ and $K_i$ in $K_{m}'$. Since $K \subset L \subset K_m$,
$K'L$ embeds into $K' K_m$ over $K$. Since $K_i/K_{i-1}$ is simple,
Lemma~\ref{lem.ed1} tells us that $\ed_k(K_i'/ K_{i-1}') \leqslant d$. We conclude that $\lev_k(K'L/K') \leqslant d$.
\end{proof}

\begin{lem} \label{lem.algebraic}
Assume that $k \subset K \subset L$ are fields and $[L:K] < \infty$. 
Then $\lev_k(L/K) = 0$ if and only if $L$ embeds in a compositum  $\overline{k} K$ over $k$. 
In particular, if $k$ is algebraically closed, then $\lev_k(L/K) = 0$ if and only if $L = K$.
\end{lem}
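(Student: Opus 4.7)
The plan is to prove both directions directly from Definition~\ref{def.level}, by analyzing a given tower (for the ``only if'' direction) and constructing one (for the ``if'' direction). Throughout, fix an algebraic closure $\Omega$ of $L$ and identify $\overline{k}$ with the algebraic closure of $k$ inside $\Omega$, so that all finite extensions of $K$ under consideration live in a common ambient field.

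For the forward direction, assume $\lev_k(L/K) = 0$ and choose a witnessing tower $K = F_0 \subset F_1 \subset \cdots \subset F_m \supset L$ with $\ed_k(F_i/F_{i-1}) = 0$ for each $i$. I would prove by induction on $i$ that $F_i \subset \overline{k}\, K$ inside $\Omega$. The step unpacks what a zero-dimensional descent provides: $F_i \cong L_0 \otimes_{K_0} F_{i-1}$ with $K_0 \subset F_{i-1}$ algebraic over $k$; since $[L_0 : K_0] = [F_i : F_{i-1}] < \infty$, the $K_0$-algebra $L_0$ is a finite (hence algebraic) field extension of $K_0$ and is therefore itself algebraic over $k$. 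Viewing $L_0$ as a subfield of $F_i \subset \Omega$ via $x \mapsto x \otimes 1$ places it inside $\overline{k}$, and then $F_i = L_0 \cdot F_{i-1} \subset \overline{k} \cdot \overline{k}\, K = \overline{k}\, K$, completing the induction and giving $L \subset F_m \subset \overline{k}\, K$.

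For the converse, assume $L$ is embedded in $\overline{k}\, K$. Since $[L:K] < \infty$ and $L$ is generated over $K$ by finitely many elements of $\overline{k}\, K$, there is a finite extension $k_0/k$ inside $\overline{k}$ such that $L \subset k_0 K$. Write $k_0 = k(\alpha_1, \ldots, \alpha_s)$ and form the tower $E_i := K(\alpha_1, \ldots, \alpha_i)$ for $0 \leq i \leq s$, terminating at $E_s = k_0 K \supset L$. The crux is to show $\ed_k(E_i/E_{i-1}) = 0$ for every $i$. The key observation is that the minimal polynomial $q_i(x) \in E_{i-1}[x]$ of $\alpha_i$ over $E_{i-1}$ divides the $k$-minimal polynomial of $\alpha_i$, so all its roots are algebraic over $k$; hence its coefficients, being elementary symmetric in those roots, lie in $E_{i-1} \cap \overline{k}$. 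Setting $K_0 := E_{i-1} \cap \overline{k}$, which is algebraic over $k$ and so satisfies $\trdeg_k(K_0) = 0$, and $L_0 := K_0[x]/(q_i(x))$, which is a field since $q_i$ is already irreducible over $E_{i-1}$, produces a descent $E_i \cong L_0 \otimes_{K_0} E_{i-1}$ with zero-dimensional base. This tower then witnesses $\lev_k(L/K) = 0$ directly from Definition~\ref{def.level}. The ``in particular'' clause is immediate: if $k$ is algebraically closed then $\overline{k}\, K = K$, forcing $L = K$.

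The step I expect to demand the most care is the propagation of the ``algebraic over $k$'' property through the tensor-product descents: in the forward direction, making precise that the abstractly given descent base $L_0$ embeds into $\overline{k}$ once an ambient algebraic closure is fixed; and in the converse, confirming that $q_i(x)$ descends to coefficients in $\overline{k}$, not merely in $E_{i-1}$.
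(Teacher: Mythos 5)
Your proof is correct and follows essentially the same route as the paper's: for the ``if'' direction you build the same tower $E_i = K(\alpha_1,\ldots,\alpha_i)$, merely verifying $\ed_k(E_i/E_{i-1})=0$ by constructing the descent datum $(K_0, L_0)$ by hand where the paper instead invokes the base-change inequality of Lemma~\ref{lem.ed1}, and for the ``only if'' direction you carry out the same unpacking of what a zero-transcendence-degree descent provides. The extra care you take with the ambient field $\Omega$ and with embedding $L_0$ into $\overline{k}$ is a sound elaboration of details the paper leaves implicit.
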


\begin{proof} The second assertion is an immediate consequence of the first.

To prove the first assertion, suppose $L \subset \overline{k} K$. In other words, $L/K$ 
is generated by elements $\alpha_1, \ldots, \alpha_m \in L$ which are algebraic over $k$.
Consider the tower of simple extensions
\[ k = k_0 \subset k_1 \subset \ldots \subset k_m, \]
where $k_i = k(\alpha_1, \ldots, \alpha_s)$. Since $\trdeg_k(k_i) = 0$, 
we have $\ed_k(k_i/k_{i-1}) = 0$ for every $i = 1, \ldots, m$. 
Now consider the tower
\[ K = k_0 K \subset k_1 K \subset \ldots \subset k_m K = L . \]
By Lemma~\ref{lem.lev1}, $\ed_k(k_i K/k_{i-1} K) \leqslant \ed_k(k_i/k_{i-1}) = 0$.
Thus $\lev_k(L/K) = 0$.

Conversely, suppose $\lev_k(L/K) = 0$. Then there exists a tower~\eqref{e.tower} of field extensions such that
$L \subset K_m$ (over $K$) and $\ed_k(K_{i}/K_{i-1}) = 0$ for each $i$. Consequently, $K_{i}$ is generated over 
$K_{i-1}$ by elements that are algebraic over $k$. This implies that $K_m$ embeds in $\overline{k}K$ over $K$, and hence, so does $L$. 
\end{proof}

Recall that a finite field extension $L/K$ is called radical if there exists a tower \eqref{e.tower} 
such that $K_i = K_{i - 1}(\lambda)$, where $\lambda^{n_i} \in K$ for some $n_i \geqslant 1$, $i = 1, \ldots, m$. 

\begin{lem} \label{lem.radical} Let $K$ be a field containing $k$ and $L/K$ be a finite field extension.
Assume that $L/K$ is (a) solvable, (b) radical, (c) purely inseparable. Then $\lev_k(L/K) \leqslant 1$. 
\end{lem}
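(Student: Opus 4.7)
The plan is to prove (c), (b), (a) in turn, after recording the following tower subadditivity of $\lev_k$: for any tower $K\subset F\subset L$ with $[L:K]<\infty$,
\[ \lev_k(L/K)\leqslant\max\bigl(\lev_k(F/K),\lev_k(L/F)\bigr). \]
This follows from Definition~\ref{def.level} by concatenating a tower realizing $\lev_k(F/K)$ with the base change over $F$ of a tower realizing $\lev_k(L/F)$, using Remark~\ref{rem.def-level}(4) to assume the steps of the second tower are simple and Lemma~\ref{lem.ed1} to bound $\ed_k$ of each base-changed step.

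Part (c) reduces to (b): in characteristic zero there is nothing to prove, and in characteristic $p>0$ any finite purely inseparable extension $L/K$ admits a tower $K=K_0\subset\cdots\subset K_m=L$ with $K_i=K_{i-1}(\beta_i)$ and $\beta_i^{p^{n_i}}\in K_{i-1}$, which is by definition radical. For (b), fix the defining radical tower $K=K_0\subset\cdots\subset K_m=L$ with $K_i=K_{i-1}(\lambda_i)$ and $a_i:=\lambda_i^{n_i}\in K_{i-1}$; it suffices to show $\ed_k(K_i/K_{i-1})\leqslant 1$ for every $i$. Let $m_i(x)\in K_{i-1}[x]$ be the minimal polynomial of $\lambda_i$. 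Since $m_i\mid x^{n_i}-a_i$, its roots in an algebraic closure are $n_i$-th roots of $a_i$, hence algebraic over $k(a_i)$, which forces the coefficients of $m_i$ (elementary symmetric functions of its roots) to be algebraic over $k(a_i)$ as well. Let $K_0^{(i)}\subset K_{i-1}$ be the subfield generated over $k$ by $a_i$ together with these coefficients, so $\trdeg_k K_0^{(i)}\leqslant 1$. The polynomial $m_i$ is irreducible over $K_0^{(i)}$ (any factorization would persist in $K_{i-1}[x]$), so $L_0^{(i)}:=K_0^{(i)}[x]/(m_i(x))$ is a field with $L_0^{(i)}\otimes_{K_0^{(i)}}K_{i-1}\simeq K_i$. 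This descends $K_i/K_{i-1}$ to $K_0^{(i)}$ and gives $\ed_k(K_i/K_{i-1})\leqslant 1$, hence $\lev_k(L/K)\leqslant 1$.

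For (a), first assume $L/K$ is separable: otherwise apply tower subadditivity to $K\subset K^{\rm sep}\subset L$, where $K^{\rm sep}$ is the separable closure of $K$ in $L$ and $L/K^{\rm sep}$ is purely inseparable, already handled by (c). Let $\tilde L/K$ be the Galois closure; Remark~\ref{rem.def-level}(2) gives $\lev_k(L/K)\leqslant\lev_k(\tilde L/K)$, so we may assume $L/K$ is Galois with solvable group $G$. Induct on $|G|$: choose a normal subgroup $H\triangleleft G$ with $G/H$ cyclic of prime order $q$, and set $F=L^H$. The inductive hypothesis gives $\lev_k(L/F)\leqslant 1$, so by tower subadditivity it suffices to show $\lev_k(F/K)\leqslant 1$ for a cyclic extension $F/K$ of prime degree $q$. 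If $q\neq\Char(K)$, adjoin a primitive $q$-th root of unity $\zeta_q\in\overline{k}$; since $\zeta_q$ is algebraic over the prime field, $\ed_k(K(\zeta_q)/K)=0$, and $F(\zeta_q)/K(\zeta_q)$ is either trivial or a single Kummer radical step $K(\zeta_q)(\sqrt[q]{a})$ covered by the argument of (b), so the tower $K\subset K(\zeta_q)\subset F(\zeta_q)$ witnesses $\lev_k(F/K)\leqslant 1$. If $q=\Char(K)=p$, then $F/K$ is Artin--Schreier: $F=K(\beta)$ with $\beta^p-\beta=a\in K$, and since $[F:K]=p$ the polynomial $y^p-y-a$ is irreducible over $K$ and a fortiori over $k(a)$, so $F/K$ descends to $k(a)[y]/(y^p-y-a)$ over $k(a)$ and $\ed_k(F/K)\leqslant 1$.

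The main obstacle is the Artin--Schreier case in positive characteristic, which is not directly covered by the radical tower argument of (b) and must be handled by a separate descent to the single transcendental parameter $a$. Everything else (tower subadditivity, descent of minimal polynomials to subfields generated by one parameter, and adjoining roots of unity for free) amounts to routine bookkeeping built on Lemmas~\ref{lem.ed1} and~\ref{lem.lev1}.
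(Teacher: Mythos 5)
Your proof is correct, and for parts (b) and (c) it coincides with the paper's argument: each radical step $K_{i-1}(\lambda_i)/K_{i-1}$ is descended to a subfield of transcendence degree $\leqslant 1$ over $k$ (the paper phrases this as $K_i = k(\lambda_i)K_{i-1}$ plus Lemma~\ref{lem.ed1}, you phrase it by descending the minimal polynomial explicitly; these are the same computation), and (c) reduces to (b) because purely inseparable extensions are radical. The real divergence is in part (a), and it stems from the definition of ``solvable'': the paper \emph{defines} a solvable extension, inside the proof, as one embeddable in a tower of steps $K_i = K_{i-1}(\lambda_i)$ with $\lambda_i$ a root of $x^{n_i}-a_i$ or $x^{n_i}-x-a_i$, so (a) is then literally the same one-step descent as (b) with the Artin--Schreier case added. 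You instead read ``solvable'' Galois-theoretically and do the genuine work of producing such a tower: reduce to the separable case, pass to the Galois closure, induct along a composition series, and handle each cyclic prime-degree layer by Kummer theory (after adjoining $\zeta_q$, which costs nothing since $\ed_k(K(\zeta_q)/K)=0$) or Artin--Schreier theory. Your route is longer but proves the stronger, more standard statement and in effect verifies the equivalence of the two notions of solvability; it also requires tower subadditivity of $\lev_k$, which you correctly re-derive --- this is the paper's Lemma~\ref{lem.lev4}, stated just after the present lemma and proved independently of it, so there is no circularity. Two cosmetic points: the paper's radical tower only requires $L$ to embed in $K_m$ rather than $K_m = L$, which changes nothing in your argument; and your ``a fortiori'' for irreducibility of $y^p-y-a$ over $k(a)$ is the right direction (irreducibility over the larger field $K$ implies irreducibility over the subfield $k(a)$).
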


\begin{proof} 
By definition $L/K$ is solvable if there exists a tower $K = K_0 \subset K_1 \subset \ldots \subset K_m$, 
as in~\eqref{e.tower}, such that $L$ embeds into $K_m$ over $K$ and
$K_i$ is of the form $K_{i-1}(\lambda_i)$ for each $i = 1, \ldots, m$, where
$\lambda_i$ is a root of a polynomial of the form

\smallskip
(i) $x^{n_i} - a_i$ or
(ii) $x^{n_i} - x - a_i$ 
for some positive integer $n_i$ and $a_i \in K_{i-1}$,

\smallskip
\noindent
Note that (i) covers the case, where $\lambda_i$ is a root of unity ($a_i = 1$), 
and (ii) is only needed when $n_i = \operatorname{char}(k) > 0$.
In both cases $K_i = k(\lambda)K_{i-1}$ and thus 
\[ \ed_k(K_i/K_{i-1}) \leqslant \ed_k (k(\lambda)/k(a)) \leqslant 1. \]
Here the first inequality follows from Lemma~\ref{lem.ed1}. The second inequality is 
obvious, since $\trdeg_k ( k(a)) \leqslant 1$. Thus $\lev_k(L/K) \leqslant 1$. This proves (a).

(b) is proved by the same argument, except that case (ii) does not occur.

(c) follows from (b) because every purely inseparable extension is radical.
\end{proof}

\begin{lem} \label{lem.lev4} (cf.~\cite[Lemma 2.7]{farb-wolfson}) Let $K$ be a field containing $k$ and $L/K$ and $M/L$
be field extensions of finite degree. If $\lev_k(L/K) \leqslant d$ and $\lev_k(M/L) \leqslant d$, then
\[ \lev_k(M/K) \leqslant d . \]
\end{lem}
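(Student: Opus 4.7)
The plan is simply to concatenate the two towers witnessing $\lev_k(L/K) \leqslant d$ and $\lev_k(M/L) \leqslant d$ and to invoke Lemma~\ref{lem.ed1} on the second tower after base-changing to the top of the first.

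First I would apply Definition~\ref{def.level} to $L/K$ to produce a tower
\[ K = K_0 \subset K_1 \subset \cdots \subset K_m \]
with $L$ embedded in $K_m$ over $K$ and $\ed_k(K_i/K_{i-1}) \leqslant d$ for all $i$. Then I would apply Definition~\ref{def.level} to $M/L$ to produce a tower
\[ L = L_0 \subset L_1 \subset \cdots \subset L_n \]
with $M$ embedded in $L_n$ over $L$ and $\ed_k(L_j/L_{j-1}) \leqslant d$ for all $j$. By Remark~\ref{rem.def-level}(4), I may assume that each extension $L_j/L_{j-1}$ is simple.

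Next I would form a compositum of $K_m$ and $L_n$ over their common subfield $L$ (for example, by taking the quotient of $K_m \otimes_L L_n$ by a maximal ideal), and inside this compositum set $E_j := K_m L_j$ for $j = 0, 1, \ldots, n$, so that $E_0 = K_m$ and $M$ embeds into $E_n$ over $K$. Concatenating yields a tower
\[ K = K_0 \subset K_1 \subset \cdots \subset K_m = E_0 \subset E_1 \subset \cdots \subset E_n, \]
each of whose steps has finite degree and into whose top field $M$ embeds over $K$.

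It remains to bound the essential dimensions. The first $m$ steps already satisfy $\ed_k(K_i/K_{i-1}) \leqslant d$ by construction. For the remaining steps, since $L_j/L_{j-1}$ is a simple finite extension and $E_j = L_j \cdot E_{j-1}$ is a compositum over $L_{j-1}$, Lemma~\ref{lem.ed1} gives
\[ \ed_k(E_j/E_{j-1}) \leqslant \ed_k(L_j/L_{j-1}) \leqslant d. \]
Therefore every step of the concatenated tower has essential dimension $\leqslant d$, which by Definition~\ref{def.level} yields $\lev_k(M/K) \leqslant d$. The only mildly delicate point is the use of Remark~\ref{rem.def-level}(4) to reduce to simple extensions so that Lemma~\ref{lem.ed1} applies to the base-changed tower; this is exactly where Lemma~\ref{lem.ed1}'s simplicity hypothesis matters, because an arbitrary compositum $E_j/E_{j-1}$ need not even have the same degree as $L_j/L_{j-1}$.
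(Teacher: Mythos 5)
Your proof is correct and follows essentially the same route as the paper: concatenate the two towers, form the composita $K_m L_j$, and apply Lemma~\ref{lem.ed1} to each step of the base-changed second tower after reducing to simple extensions via Remark~\ref{rem.def-level}(4). The only cosmetic difference is that you build the compositum via $K_m \otimes_L L_n$ while the paper embeds both towers into $\overline{K}$, and you correctly note that only the second tower needs to consist of simple extensions.
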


\begin{proof}  Choose a tower
$K = K_0 \subset K_1 \subset \ldots \subset K_m$ for $L/K$ as in~\eqref{e.tower}, and a similar tower
$L = L_0 \subset L_1 \subset \ldots \subset L_n$ for $M/L$. That is,
$L$ embeds into $K_m$ over $K$, $M$ embeds into $L_n$ over $L$, $\ed_k(K_i/K_{i-1}) \leqslant d$ 
and $\ed_k(L_j/L_{j-1}) \leqslant d$ for every $i = 1, \ldots, m$ and $j = 1, \ldots, n$. 
By Remark~\ref{rem.def-level}(4) we may assume that all intermediate extensions $K_i/K_{i-1}$ and $L_j/L_{j-1}$
are simple.
Let $\overline{K}$ be an algebraic closure of $K$.
Fix embeddings $K_m \hookrightarrow \overline{K}$ and $L_n \hookrightarrow \overline{K}$ and consider the tower of simple extensions
\[ K_0 \subset K_1 \subset \ldots \subset K_m = K_m L_0\subset K_m L_1 \subset \ldots \subset K_{m} L_n , \]
where $K_m L_i$ is the compositum of $K_m$ and $L_i$ in $\overline{K}$.
Clearly, $M \subset L_n \subset K_{m} L_n$. Thus it suffices to show that 

\smallskip
(i) $\ed_k(K_{i}/K_{i-1}) \leqslant d$ for every $i = 1, \ldots, m$ and 

\smallskip
(ii) $\ed_k(K_m L_j/ K_{m} L_{j-1}) \leqslant d$ for every $j = 1, \ldots, n$.

\smallskip
\noindent
(i) follows from our choice of the tower $K_0 \subset K_1  \subset \ldots \subset K_m$.
On the other hand,
by Lemma~\ref{lem.ed1}, $\ed_k(K_m L_j / K_m L_{j-1}) \leqslant \ed_k(L_j/ L_{j-1})$, and
by our choice of the tower $L_0 \subset L_1 \ldots \subset L_n$,
$\ed_k(L_j/ L_{j-1}) \leqslant d$ for every $j = 1, \ldots, n$. This proves (ii).
\end{proof}

\begin{lem} \label{lem.norm} (cf.~\cite[Lemma 2.11]{farb-wolfson})
Let $k \subset K \subset L$ be fields. Assume that the field extension $L/K$ is finite and separable.
Denote the normal closure of $L$ over $K$ by $L^{\rm norm}$. Then $\lev_k(L/K) = \lev_k(L^{\rm norm}/K)$.
\end{lem}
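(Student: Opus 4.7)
The inequality $\lev_k(L/K) \leqslant \lev_k(L^{\rm norm}/K)$ is immediate from Remark~\ref{rem.def-level}(2), since $L$ embeds in $L^{\rm norm}$ over $K$. So the content of the lemma is the reverse inequality $\lev_k(L^{\rm norm}/K) \leqslant \lev_k(L/K)$, and my plan is to build a tower $K \subset L^{\rm norm}$ in which each step is a compositum of the previous field with a $K$-conjugate of $L$, then combine Lemma~\ref{lem.lev1} with Lemma~\ref{lem.lev4}.

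In detail, set $d = \lev_k(L/K)$ and fix an algebraic closure $\overline{K}$ of $K$. Since $L/K$ is finite and separable, there are finitely many $K$-embeddings $\sigma_1,\ldots,\sigma_r\colon L\hookrightarrow \overline{K}$, and $L^{\rm norm} = L_1 L_2 \cdots L_r$ inside $\overline{K}$, where $L_i := \sigma_i(L)$. Each $L_i/K$ is isomorphic to $L/K$ as an extension of $K$, so $\lev_k(L_i/K) = d$ for every $i$.

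Next, consider the tower
\[
K = M_0 \subset M_1 \subset M_2 \subset \cdots \subset M_r = L^{\rm norm},
\qquad M_i := L_1 L_2 \cdots L_i.
\]
For each $i \geqslant 1$, the field $M_i$ is a compositum of $M_{i-1}$ with $L_i$ over $K$, so Lemma~\ref{lem.lev1} (applied with $K' = M_{i-1}$ and the finite extension $L_i/K$) gives
\[
\lev_k(M_i / M_{i-1}) \;\leqslant\; \lev_k(L_i / K) \;=\; d.
\]

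Finally, iterating Lemma~\ref{lem.lev4} along this tower yields $\lev_k(L^{\rm norm}/K) = \lev_k(M_r/M_0) \leqslant d$, which is the desired inequality. No step is really difficult: the main point to get right is that separability is what ensures $L^{\rm norm}$ is generated by finitely many $K$-conjugates of $L$, so the tower is finite and each step has the required form for Lemma~\ref{lem.lev1} to apply.
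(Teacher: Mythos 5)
Your proof is correct and follows essentially the same route as the paper: the paper writes $L^{\rm norm}$ as $K(\alpha_1,\ldots,\alpha_n)$ for the roots of the minimal polynomial of a primitive element of $L$ (so $K(\alpha_i)\simeq_K L$), which is the same tower of conjugates you build via the embeddings $\sigma_i$, and it likewise concludes with Lemma~\ref{lem.lev1} followed by a recursive application of Lemma~\ref{lem.lev4}.
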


\begin{proof} By Remark~\ref{rem.def-level}(2), $\lev_k(L/K) \leqslant \lev_k(L^{\rm norm}/K)$. We will thus focus on proving the opposite inequality.

Set $d = \lev(L/K)$. By the Primitive Element Theorem, $L^{\rm norm} \simeq_K K[x]/f(x)$ for some irreducible polynomial $f(x) \in K[x]$.
Then $f(x)$ splits into a product of linear factors over $L^{\rm norm}$. Denote its roots in $L^{\rm norm}$ by $\alpha_1, \ldots, \alpha_n$.
Set $L_i = K(\alpha_1, \ldots, \alpha_i)$; in particular, $L_0 = K$. We claim that $\lev_k(L_i/L_{i-1}) \leqslant d$ for each $i = 1, \ldots, n$.
If we can prove this claim, then applying Lemma~\ref{lem.lev4} recursively, we obtain the desired inequality
\[ \lev_k(L^{\rm norm}/K) = \lev_k(L_n/K) \leqslant d = \lev_k(L/K). \]
It thus remains to prove the claim.  Since $L_i$ is a composite of $L_{i-1}$ and $K(\alpha_i) \simeq_K L$ for each $i$, Lemma~\ref{lem.lev1} tells us that
\[ \lev_k(L_i/L_{i-1}) = \lev(L_{i-1} K(\alpha_i) /L_{i-1}) \leqslant \lev_k(K(\alpha_i)/K) = \lev_k(L/K) = d , \]
as claimed.
\end{proof}

\begin{prop} \label{prop.lev-Galois}  (cf.~\cite[Lemma 2.12]{farb-wolfson})
Let $k \subset K \subset L$ be fields, where $[L:K]< \infty$. Assume that
$\lev_k(L/K) \leqslant d$. Then the tower \[ K = K_0 \subset \ldots \subset K_m \] of field extensions
in Definition~\ref{def.level} can be chosen to have the following additional properties.

\smallskip
(a) Each field extension $K_i/K_{i+1}$ is simple and either separable or purely inseparable.

\smallskip
(b) If $K_i/K_{i-1}$ is separable, then it is Galois.

\smallskip
(c) If $K_i/K_{i-1}$ is Galois, then $\Gal(K_{i}/K_{i-1})$ is a finite simple group.
\end{prop}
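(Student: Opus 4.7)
The plan is to refine the given tower (from Definition~\ref{def.level}) in three stages, each aimed at one of (a), (b), (c), while preserving the bound $\ed_k(K_i/K_{i-1}) \leqslant d$ at every step. By Remark~\ref{rem.def-level}(iv) I may begin with a tower in which every step is already simple.

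To achieve (a), at each step $K_i/K_{i-1}$ I would insert the separable closure $K_i^{\rm sep}$ of $K_{i-1}$ inside $K_i$; Lemma~\ref{lem.Galois}(a) bounds the essential dimensions of both resulting pieces by $\ed_k(K_i/K_{i-1}) \leqslant d$. The separable piece $K_i^{\rm sep}/K_{i-1}$ is simple by the Primitive Element Theorem, while the purely inseparable piece $K_i/K_i^{\rm sep}$ can be broken into simple steps via Lemma~\ref{lem.ed2}, each automatically purely inseparable. To achieve (b), I would proceed from the bottom up: at each round, locate the lowest index $i$ with $K_i/K_{i-1}$ separable but not Galois, let $N$ denote its Galois closure inside a fixed algebraic closure of $K_m$, and replace $K_i \subset K_{i+1} \subset \cdots \subset K_m$ by $N \subset NK_{i+1} \subset \cdots \subset NK_m$. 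By the Primitive Element Theorem and Lemma~\ref{lem.Galois}(b), $N/K_{i-1}$ is simple with $\ed_k(N/K_{i-1}) = \ed_k(K_i/K_{i-1}) \leqslant d$. For $j > i$, writing $K_j = K_{j-1}(\alpha_j)$, the compositum $NK_j = NK_{j-1}(\alpha_j)$ is again simple over $NK_{j-1}$ with $\alpha_j$ still separable or purely inseparable, whichever it was originally; and Lemma~\ref{lem.ed1} yields $\ed_k(NK_j/NK_{j-1}) \leqslant \ed_k(K_j/K_{j-1}) \leqslant d$. Since the steps below $i$ are unchanged, the lowest offending index strictly increases each round, so the process terminates after finitely many iterations with (a) and (b) both in force.

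Property (c) will be the main obstacle. For each Galois step $K_i/K_{i-1}$ with group $G$, I would pick a composition series $1 = G_n \triangleleft \cdots \triangleleft G_0 = G$ and set $F_j := K_i^{G_j}$; each $F_j/F_{j-1}$ is Galois with simple Galois group $G_{j-1}/G_j$, hence simple by the Primitive Element Theorem. The delicate point is bounding $\ed_k(F_j/F_{j-1})$ by $d$: the naive descent in Definition~\ref{def.ed-algebra} does not directly control intermediate extensions, because an arbitrary descent of $K_i$ need not split along $F_{j-1}$. Here I would invoke the $G$-equivariant descent of \cite[Lemma 2.2]{br} which already drives the proof of Lemma~\ref{lem.Galois}(c): choose a descent $K_{i,0}/K_{i-1,0}$ of $K_i/K_{i-1}$ with $\trdeg_k(K_{i-1,0}) = \ed_k(K_i/K_{i-1})$ and with $K_{i,0}$ a $G$-invariant subfield of $K_i$, and set $F_{j,0} := K_{i,0}^{G_j}$. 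Then $F_j/F_{j-1}$ descends to $F_{j,0}/F_{j-1,0}$, whence
\[
\ed_k(F_j/F_{j-1}) \leqslant \trdeg_k(F_{j-1,0}) = \trdeg_k(K_{i-1,0}) = \ed_k(K_i/K_{i-1}) \leqslant d,
\]
using that $K_{i,0}/K_{i-1,0}$ is finite. Splicing the composition-series refinements into the tower from Stage~2 yields a tower satisfying (a), (b), and (c).
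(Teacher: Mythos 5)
Your proposal is correct and follows essentially the same three-stage refinement as the paper's proof, using the same tools (Lemma~\ref{lem.Galois} and the Primitive Element Theorem for stages (a)--(b), Lemma~\ref{lem.ed1} for the composita, and a bottom-up termination argument). The only cosmetic difference is in stage (c): the paper inserts one fixed field $K_i^N$ at a time and recurses via Lemma~\ref{lem.Galois}(c), whereas you refine along a full composition series in one pass using the $G$-equivariant descent of \cite[Lemma 2.2]{br} directly --- but that equivariant descent is exactly the mechanism underlying Lemma~\ref{lem.Galois}(c), so the two arguments coincide.
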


\begin{proof} We will start with a tower $K = K_0 \subset K_1 \subset \ldots \subset K_m$ of Definition~\ref{def.level}.
By Remark~\ref{rem.def-level}(4), we may assume that each intermediate extension $K_i/K_{i-1}$ is simple.
We will now modify this tower in three steps (a), (b) and (c), 
so that it acquires properties (a), (b), and (c) from the statement of the proposition, respectively. At each stage $m$ 
may increase and the fields $K_i$ may change, but every $K_i/K_{i-1}$ will remain simple, the largest field $K_m$ will either 
get larger or stay the same (and in particular, it will continue to contain $L$), and the maximal value of $\ed_k(K_i/K_{i-1})$ will 
not increase (so that it will remains $\leqslant d$). 

\smallskip
(a) Let $K^{\rm sep}_{i-1}$ be the separable closure of $K_{i-1}$ in $K_i$. If $K_i/K_{i-1}$ is neither separable nor
purely inseparable, i.e., $K_{i-1} \subsetneq K^{\rm sep}_{i-1} \subsetneq K_i$, we insert $K^{\rm sep}_{i-1}$ 
between $K_{i-1}$ and $K_i$.  Note that $K_i/K_{i-1}^{\rm  sep}$ is simple because $K_i/K_{i-1}$ is, and $K^{\rm sep}_{i-1}/K_{i-1}$
is simple by the Primitive Element Theorem. Now relabel $K_0, K_1, \ldots $ to absorb the newly inserted fields. By our construction
each $K_i/K_{i-1}$ is simple and either separable or purely inseparable.
The maximal value of $\ed_k(K_{i}/K_{i-1})$ does not increase by Lemma~\ref{lem.Galois}(a).

\smallskip
(b) If $K_1/K_0$ is purely inseparable, do nothing. If $K_1/K_0$ is separable, replace $K_1$ by its normal closure 
$K^{\rm norm}_1$ and $K_i$ by $K_1^{\rm norm} K_i$ for each $i \geqslant 2$. All newly created extensions $K_1^{\rm norm}/K_0$
and $K_1^{\rm norm}K_i/ K_1^{\rm norm} K_{i-1}$ ($i \geqslant 2$), remain simple and either separable or purely inseparable.
Moreover, for every $i \geqslant 2$, $\ed_k(K^{\rm norm}_1/K_0) = \ed_k(K_1/K_0)$ by Lemma~\ref{lem.Galois}(b) and 
$\ed_k(K_1^{\rm norm}K_i/ K_1^{\rm norm} K_{i-1}) \leqslant \ed_k(K_i/K_{i-1})$ by Lemma~\ref{lem.ed1}.

Now relabel $K_0, K_1, \ldots$ and do the same for the extension $K_2/K_1$.
That is, if $K_2/K_1$ is purely inseparable, then do nothing. If $K_2/K_1$ is separable, 
replace $K_2$ by its normal closure $K^{\rm norm}_2$, and $K_i$ by $K_2^{\rm norm} K_i$ for every $i \geqslant 3$.
Proceed recursively: do the same thing for the extension $K_3/K_2$, then (after suitably modifying $K_3, \ldots, K_m$) 
for the extension $K_4/K_3$, etc. When all of these modifications are completed, the resulting tower 
$K = K_0 \subset K_1 \subset \ldots \subset K_m$ will have properties (a) and (b).

\smallskip
(c) If $K_i/K_{i-1}$ is purely inseparable, do nothing. If it is Galois and $G = \Gal(K_i/K_{i-1})$ is simple, again do nothing.
If not - say if $G$ has a proper normal subgroup $N$ - insert $K_i^N$ between $K_{i-1}$ and $K_i$. By Lemma~\ref{lem.Galois}(c),
$\ed_k(K_i/K_i^N)$ and $\ed_k(K_i^N/K_{i-1})$ are both $\leqslant \ed_k(K_i/K_{i-1})$. Thus the maximal value of $\ed_k(K_i/K_{i-1})$
does not increase. Proceeding recursively, we arrive at a tower of field extensions satisfying (a), (b) and (c).
\end{proof}

\section{Extensions of valued fields}
\label{sect.valuation}

Throughout this section we will assume that

\smallskip
(1) $k \subset K \subset L$ are fields and $[L:K] < \infty$,

\smallskip
(2) $K$ and $L$ are complete relative to a discrete valuation $\nu \colon L^* \to \bbZ$. 

\smallskip
(3) We will denote the residue fields of $k$, $K$ and $L$ by $k_{\nu}$, $K_{\nu}$ and $L_{\nu}$, respectively. 

\smallskip
\noindent
Note that we do not require $k$ to be complete.
Our goal is to compare $\lev_k(L/K)$ to $\lev_{k_{\nu}}(L_{\nu}/K_{\nu})$. Our main result is as follows.

\begin{prop} \label{prop.lev-valuation}  In addition to notational convention (1) - (3), assume that the residue field $K_{\nu}$ is perfect.
Then $\lev_{k_{\nu}} (L_{\nu}/K_{\nu}) \leqslant \max \{ \lev_k(L/K), 1 \}$.
\end{prop}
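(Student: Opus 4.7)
The plan is to choose a tower for $L/K$ with extra structure from Proposition~\ref{prop.lev-Galois}, extend $\nu$ uniquely to each level, and pass to residue fields. Set $d = \lev_k(L/K)$ and apply Proposition~\ref{prop.lev-Galois} to obtain a tower $K = K_0 \subset K_1 \subset \cdots \subset K_m$ with $L \hookrightarrow K_m$, each step $K_i/K_{i-1}$ simple and either purely inseparable or Galois with simple Galois group, and $\ed_k(K_i/K_{i-1}) \leqslant d$. Since $K$ is complete, $\nu$ extends uniquely to each $K_i$; write $\overline{K}_i$ for its residue field. Because $\overline{K}_0 = K_\nu$ is perfect and algebraic extensions of perfect fields are perfect, every $\overline{K}_i$ is perfect. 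By Lemma~\ref{lem.lev4} (concatenation of levels), it suffices to show $\ed_{k_\nu}(\overline{K}_i/\overline{K}_{i-1}) \leqslant \max(d, 1)$ for every $i$; the residue tower then witnesses $\lev_{k_\nu}(L_\nu/K_\nu) \leqslant \max(d, 1)$.

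The degenerate cases are immediate. If $K_i/K_{i-1}$ is purely inseparable, then so is the residue extension, which, being over the perfect field $\overline{K}_{i-1}$, is trivial. If $K_i/K_{i-1}$ is Galois with simple group $G$, the (always solvable) inertia subgroup is a normal subgroup of $G$, hence either trivial (unramified) or equal to $G$. In the totally ramified subcase, $G$ is simple and solvable, forcing $G \cong \mathbb{Z}/p\mathbb{Z}$, so again the residue extension is trivial. Thus only the unramified Galois subcase, where $\overline{K}_i/\overline{K}_{i-1}$ is Galois of the same degree with group $G$, requires real work.

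To handle that case, descend $K_i/K_{i-1}$ to $K_i^0/K_{i-1}^0$ with $\trdeg_k(K_{i-1}^0) \leqslant d$, pick a primitive element $\alpha \in K_i^0$, and---after scaling $\alpha$ by a suitable element of $K_{i-1}^0$, possibly after first adjoining a uniformizer of $K_{i-1}$ to $K_{i-1}^0$, an operation that raises the transcendence degree by at most $1$---arrange $\alpha$ to be integral and with $\bar\alpha$ generating $\overline{K}_i$ over $\overline{K}_{i-1}$ via Hensel's lemma. The minimal polynomial of $\alpha$ then has coefficients in $R := K_{i-1}^0 \cap \mathcal{O}_{K_{i-1}}$, so its reduction descends $\overline{K}_i/\overline{K}_{i-1}$ to $\operatorname{Frac}(\bar R)$. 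Applying Abhyankar's inequality to the valued field extension $k \hookrightarrow K_{i-1}^0$ gives
\[
\trdeg_{k_\nu}(\operatorname{Frac}(\bar R)) + \operatorname{rank}_{\mathbb{Z}}(\Gamma_{K_{i-1}^0}/\Gamma_k) \leqslant \trdeg_k(K_{i-1}^0),
\]
and a careful case analysis on whether a uniformizer must be adjoined and on the rank of the value-group quotient yields $\trdeg_{k_\nu}(\operatorname{Frac}(\bar R)) \leqslant \max(d, 1)$. The main obstacle is precisely this valuation-theoretic bookkeeping in the unramified Galois step: one must simultaneously arrange integrality of the primitive element and residue-primitivity of its reduction, while tracking the transcendence degree of the descent relative to $k_\nu$. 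The $+1$ slack in $\max(d, 1)$ is exactly what is needed to absorb the edge case in which adjoining a uniformizer of $K_{i-1}$ to $K_{i-1}^0$ raises the transcendence degree by $1$ without a compensating contribution from the rank of $\Gamma_{K_{i-1}^0}/\Gamma_k$.
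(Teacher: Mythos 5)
Your overall skeleton matches the paper's: take the refined tower from Proposition~\ref{prop.lev-Galois}, pass to residue fields step by step, and concatenate with Lemma~\ref{lem.lev4}. Your treatment of the purely inseparable steps and of the totally ramified Galois steps is correct. The gap is in the unramified Galois case, and it is exactly at the point you flag as "the main obstacle." You assert that, after scaling a primitive element $\alpha$ of the descended extension $K_i^0/K_{i-1}^0$ by an element of $K_{i-1}^0$ (possibly after adjoining one uniformizer), one can arrange $\bar\alpha$ to generate $\overline{K}_i$ over $\overline{K}_{i-1}$. This is not true in general: the reduction of any element of $K_i^0$ lands in the residue field of $K_i^0$ with respect to the restricted valuation, and that residue field can be far too small. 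Concretely, an unramified quadratic extension $F((t))(\sqrt{a})/F((t))$ descends to $\mathbb{Q}(t\sqrt{a})/\mathbb{Q}(at^2)$, whose residue extension is trivial; no amount of scaling inside the descended fields produces an element reducing to $\sqrt{a}$. So "integrality of the primitive element plus residue-primitivity of its reduction" cannot be arranged by bookkeeping alone, and the subsequent Abhyankar computation has nothing to apply to.

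The paper closes this gap using the group structure, in two different ways. When $\Gal(K_i/K_{i-1})$ is simple and \emph{non-abelian}, Lemma~\ref{lem.ed-valuation} shows that a Galois-stable descent $L_0/K_0$ has trivial inertia filtration on its residue ring (because the inertia and higher ramification groups are normal and of a restricted form --- cyclic, or $P\ltimes C$ --- which a non-abelian simple group cannot realize); hence the residue extension descends with full degree to $(K_0)_\nu$, and Abhyankar's inequality gives $\ed_{k_\nu}\leqslant \ed_k$ with no loss of $1$. When the Galois group is \emph{abelian}, no descent argument is attempted at all: the residue extension is then abelian, hence solvable, hence of level $\leqslant 1$ by Lemma~\ref{lem.radical}. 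That abelian case --- not a uniformizer adjunction --- is the true source of the $\max\{\,\cdot\,,1\}$ in the statement. To repair your proof you would need either to reproduce the ramification-theoretic argument of Lemma~\ref{lem.ed-valuation} for the non-abelian simple steps, or to split off the abelian steps and handle them by solvability as the paper does.
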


Our proof of Proposition~\ref{prop.lev-valuation} will rely on the following lemma comparing the essential dimensions of
$L/K$ and $L_{\nu}/K_{\nu}$.

\begin{lem} \label{lem.ed-valuation} In addition to notational conventions (1), (2), (3), assume that
$L/K$ is a Galois extension, $H = \Gal(L/K)$ is a non-abelian finite simple group and
$L_{\nu}$ is separable over $K_{\nu}$. 
Then $\ed_{k_{\nu}} (L_{\nu}/K_{\nu}) \leqslant \ed_k(L/K)$.
\end{lem}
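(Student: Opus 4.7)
The plan is to start with a descent $L_0/K_0$ of $L/K$ having $\trdeg_k(K_0) = d := \ed_k(L/K)$ and $K_0$ finitely generated over $k$ (Lemma~\ref{lem.algebras2}), and then produce a descent of $L_\nu/K_\nu$ by simply restricting the valuation $\nu$ to $K_0$ and $L_0$. Set $R = K_0 \cap \mathcal{O}_K$ and $S = L_0 \cap \mathcal{O}_L$; these are discretely valued rings (their value groups are subgroups of $\nu(K^*) = \bbZ$) with residue fields $\tilde K_0 \hookrightarrow K_\nu$ and $\tilde L_0 \hookrightarrow L_\nu$. I will argue that $\tilde L_0/\tilde K_0$ is a descent of $L_\nu/K_\nu$; the desired inequality will then follow from Abhyankar's inequality applied to $K_0/k$ with the valuation $\nu|_{K_0}$, which gives $\trdeg_{k_\nu}(\tilde K_0) \leqslant \trdeg_k(K_0) = d$.

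Since $K$ and $L$ are complete, $\nu$ is the unique extension of $\nu|_K$ to $L$, and therefore $H = \Gal(L/K) = \Gal(L_0/K_0)$ preserves $\mathcal{O}_L$, hence also $S = L_0 \cap \mathcal{O}_L$, and (as $S$ is local) its maximal ideal $\mathfrak{m}_S$ as well. This says that the decomposition group of $\mathfrak{m}_S$ in $H$ equals $H$, equivalently, that $S$ is the unique valuation ring of $L_0$ extending $R$. Upon completing we obtain a Galois extension $\hat L_0/\hat K_0$ of complete discretely valued fields with Galois group $H$ whose residue extension is precisely $\tilde L_0/\tilde K_0$.

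The main step is to show that the inertia subgroup $\tilde I \trianglelefteq H$ of $\hat L_0/\hat K_0$ is trivial. Classical ramification theory furnishes a filtration of $\tilde I$ whose top quotient is cyclic and whose lower quotients are elementary abelian $p$-groups (for $p$ the residue characteristic); in particular $\tilde I$ is solvable. Since $H$ is non-abelian simple, the two normal subgroups permitted by simplicity are $1$ and $H$, and the latter is excluded because $H$ is not solvable. Hence $\tilde I = 1$. Combined with the hypothesis that $L_\nu/K_\nu$ is separable, the identity $|\tilde I| = e \cdot p^s$ (with $e$ the ramification index and $p^s$ the inseparability degree of the residue extension) forces $e = 1$ and $p^s = 1$, so $\tilde L_0/\tilde K_0$ is separable Galois of degree $|H|$ with Galois group $H$.

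Finally, the $H$-action on $\tilde L_0$ is by construction the restriction of the $H$-action on $L_\nu$ (both are induced by reduction from the $H$-action on $L$), so
\[ \tilde L_0 \cap K_\nu \; = \; \tilde L_0 \cap L_\nu^H \; = \; \tilde L_0^H \; = \; \tilde K_0. \]
Consequently the compositum $\tilde L_0 \cdot K_\nu \subset L_\nu$ has degree $[\tilde L_0 : \tilde K_0] = |H| = [L_\nu : K_\nu]$ over $K_\nu$, forcing $\tilde L_0 \cdot K_\nu = L_\nu$ and $\tilde L_0 \otimes_{\tilde K_0} K_\nu \cong L_\nu$. Thus $L_\nu/K_\nu$ descends to $\tilde L_0/\tilde K_0$, and $\ed_{k_\nu}(L_\nu/K_\nu) \leqslant \trdeg_{k_\nu}(\tilde K_0) \leqslant d$. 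The only substantive obstacle is the inertia-vanishing argument in paragraph three; everything else is residue-field Galois theory and a direct application of Abhyankar's inequality to the finitely generated extension $K_0/k$.
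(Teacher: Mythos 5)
Your argument is, in substance, the paper's: descend $L/K$ to a minimal $L_0/K_0$, restrict the valuation, rule out nontrivial inertia by pitting the solvability of inertia groups against the simplicity and non-solvability of $H$, and conclude by a degree count together with Abhyankar's inequality. There is, however, one genuine gap at the first step. Lemma~\ref{lem.algebras2} produces a descent $L_0/K_0$, but gives no reason for $L_0$ to be stable under $H = \Gal(L/K)$: for an arbitrary descent, $\sigma(L_0)$ is in general a different subfield of $L$, and $L_0/K_0$ need not even be normal (the $K_0$-conjugates of a primitive element of $L_0$ lie in $L$ but not necessarily in $L_0$). Your assertion that ``$H = \Gal(L/K) = \Gal(L_0/K_0)$ preserves $\mathcal{O}_L$, hence also $S$'' therefore does not parse as written, and everything downstream --- the uniqueness of $S$ over $R$, the identification $\Gal(\hat{L}_0/\hat{K}_0) = H$, and the computation $\tilde{L}_0 \cap K_\nu = \tilde{L}_0^H = \tilde{K}_0$ --- depends on it. The repair is exactly the one the paper makes: by \cite[Lemma 2.2]{br} the minimal descent can be chosen with $L_0$ a $\Gal(L/K)$-invariant subfield of $L$.

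A smaller caution: the ramification filtration you invoke (cyclic top quotient, elementary abelian $p$-quotients below) is classically proved for complete discretely valued fields whose residue extension is \emph{separable}, and separability of $\tilde{L}_0/\tilde{K}_0$ is not known a priori --- it does not follow from separability of $L_\nu/K_\nu$, since $\tilde{K}_0$ may be strictly smaller than $K_\nu$. What you actually need is only the weaker fact, valid without that hypothesis, that the inertia group of a Galois extension of complete discretely valued fields is an extension of a cyclic group by a normal $p$-group, hence solvable; this is the same structural input the paper uses. With these two points addressed the proof is complete, and the remaining difference from the paper is cosmetic: the paper first splits into the totally ramified and unramified cases for $L/K$ itself, whereas you recover unramifiedness of $L/K$ at the end from the degree count.
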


\begin{proof}[Proof of Lemma~\ref{lem.ed-valuation}]
First note that since $K$ is complete, $\nu$ is the unique valuation of $L$ lying over $\nu_{| \, K^*}$; 
see~\cite[Theorem II.3.1(ii)]{serre-lf}.
Thus $\nu$ remains invariant under the action of $H$, and this action descends to $L_{\nu}$.

Let $K_{\rm un}/K$ be the largest unramified subextension of $L/K$. Then $I = \Gal(L/K_{\rm un})$ is the inertia 
subgroup of $H = \Gal(L/K)$, i.e., the kernel of the $H$-action on $L_{\nu}$. 
In particular, $I$ is normal in $H$, and $L_{\nu}/K_{\nu}$ is an $(H/I)$-Galois extension; see~\cite[Corollary III.5.1]{serre-lf}. 
Since $H$ is simple, there are only two possibilities: either 

\smallskip
(i) $I = G$ and $K_{\rm un} = K$, i.e., $L/K$ is totally ramified, or

\smallskip
(ii) $I = 1$ and $K_{\rm un} = L$, i.e., $L/K$ is unramified.

\smallskip
\noindent
Let us consider these possibilities separately. Case (i) is straightforward. Here $L_{\nu} = K_{\nu}$, so that 
$\ed_{k_{\nu}} (L_{\nu}/K_{\nu}) = 0$ and hence, $\ed_{k_{\nu}} (L_{\nu}/K_{\nu}) \leqslant \ed_k(L/K)$, as desired.

In case (ii), $L_{\nu}/K_{\nu}$ is an $H$-Galois extension. 
Let $d = \ed_k(L/K)$. By definition, $L/K$ descends to $L_0/K_0$, where $k \subset K_0 \subset K$ and $\trdeg_k(K_0) = d$.
By \cite[Lemma 2.2]{br} we may assume that $L_0$ is invariant under $H$. Recall that $L = L_0 \otimes_{K_0} K$. Since $H$ acts faithfully on $L$
and trivially on $K$, it acts faithfully on $L_0$. The valuation $\nu$ restricts to an $H$-invariant discrete valuation 
on $L_0$, and the $H$-action on $L_0$ descends to an $H$-action on the residue field $(L_0)_{\nu}$.
Note however that a priori $L_0$ and $K_0$ may not be complete, and $L_0/K_0$ may be ramified. 

We claim that $H$ acts faithfully of $(L_0)_{\nu}$. Let us assume for a moment that this claim has been established.
Then $L_\nu/ K_\nu$  descends to $(L_0)_\nu/ (K_0)_\nu$, where $(K_0)_{\nu}$ denotes the residue field of $K_0$.
Indeed, the image of the natural map $(L_0)_{\nu} \otimes_{(K_0){\nu}} K_{\nu} \to L_{\nu}$ 
is surjective by the Galois correspondence, and hence, is an isomorphism, because $[(L_0)_{\nu}: (K_0)_{\nu}] = |H| = [L_{\nu}: K_{\nu}]$.
We thus conclude, that
\[ \ed_{k_{\nu}}(L_{\nu}/K_{\nu}) \leqslant \trdeg_{k_{\nu}} (K_{0})_{\nu} \leqslant \trdeg_k(K_0) = d, \]
as desired. Here the second inequality follows from \cite[lemma 2.1]{brosnan2018essential}, which is a special case of Abhyankar's Lemma. 

It remains to prove the claim. For each $d \geqslant 0$, let $L_0^{\geqslant d} = \{ a \in L^* \, | \, \nu(a) \geqslant d \} \cup \{ 0 \}$.
In particular, $L_0^{\geqslant 0}$ is the valuation ring of $\nu$ in $L_0$, $L_0^{\geqslant 1}$ is the maximal ideal, and
$L_0^{\geqslant 0}/L_0^{\geqslant 1}$ is, by definition, the residue field $(L_0)_{\nu}$. Let $I_d$ be the kernel of the $H$-action on $L_0^{\geqslant 0}/L_0^{\geqslant d + 1}$.
Then $I_0 \supset I_1 \supset I_2 \supset \ldots $ is a decreasing sequence of normal subgroups of $H$.
Since $H$ is simple, each $I_d$ is either all of $H$ or $1$. Our goal is to show that $I_0 = 1$. Assume the contrary: $I_0 = H$.
Consider two cases.

\smallskip
Case 1: $\Char((L_0)_{\nu}) = 0$. In this case $I_0 = H$ is a cyclic group; see~\cite[Corollary IV.2.2]{serre-lf} or \cite[Lemma 2.2(a)]{brosnan2018essential}. This contradicts our assumption that $H$ is non-abelian.

\smallskip
Case 2: $\Char((L_0)_{\nu}) = p > 0$. In this case $I_0 = H$ is of the form $P \ltimes C$, where $P$ is a $p$-group and $C$ is a cyclic group of
order prime to $p$; see~\cite[Corollary IV.2.4]{serre-lf}. Once again, this contradicts our assumption that $H$ is simple and non-abelian.
This completes the proof of the claim and thus of Lemma~\ref{lem.ed-valuation}.
\end{proof}

\begin{proof}[Proof of Proposition~\ref{prop.lev-valuation}] Let $d = \lev_k(L/K)$. By Definition~\ref{def.level} there exists
a tower 
\[ K = K_0 \subset K_1 \subset \ldots \subset K_m, \]
of finite field extensions, where
$L$ embeds in $K_m$ over $K$ and $\ed_k(K_i/K_{i-1}) \leqslant d$ for each $i = 1, \ldots, m$. Since $K$ is complete,
so are $K_1, \ldots, K_m$; see~\cite[Section II.2, Proposition 3]{serre-lf}. By Proposition~\ref{prop.lev-Galois}
we may assume that each $K_{i+1}/K_i$
is simple and either purely inseparable or Galois with $\Gal(K_{i+1}/K_i)$ a finite simple group. 
Passing to residue fields, we obtain a tower 
\[ K_{\nu} = (K_0)_{\nu} \subset (K_1)_{\nu} \subset \ldots \subset (K_m)_{\nu} \]
such that $L_{\nu}$ embeds into  $(K_m)_{\nu}$ over $k_{\nu}$. In view of Lemma~\ref{lem.lev4} it now suffices to show that
\begin{equation} \label{e.valuation}
\text{$\lev_{k_{\nu}}((K_{i})_{\nu}/ (K_{i-1})_{\nu}) \leqslant \max \{ d, 1 \}$ for each $i = 1, \ldots, m$.}
\end{equation}

If $K_{i}/K_{i-1}$ is purely inseparable, then $(K_i)_{\nu} / (K_{i-1})_{\nu}$ is again purely inseparable. By Lemma~\ref{lem.radical}(c),
$\lev_{k_{\nu}}((K_i)_{\nu} / (K_{i-1})_{\nu}) \leqslant 1$, and~\eqref{e.valuation} holds.

If $K_i/K_{i-1}$ is Galois, and $\Gal(K_i/K_{i-1})$ is simple and non-abelian, then 
\[ \lev_{k_{\nu}}((K_{i})_{\nu}/ (K_{i-1})_{\nu}) \leqslant 
\ed_{k_{\nu}}((K_{i})_{\nu}/ (K_{i-1})_{\nu}) \leqslant \ed_k(K_i/K_{i-1}) \leqslant d , \]
and~\eqref{e.valuation} follows. Here the first inequality is given by Remark~\ref{rem.def-level}(3) and the second by Lemma~\ref{lem.ed-valuation}.

It remains to consider the case, where $H_i = \Gal(K_i/K_{i-1})$ is abelian. 
Since we are assuming that $K_{\nu}$ is perfect, $(K_i)_{\nu}/ (K_{i-1})_{\nu}$ is a Galois extension,
where $\Gal \big( (K_i)_{\nu}/ (K_{i-1})_{\nu} \big)$ is a quotient of $\Gal(K_i/K_{i-1})$; see~\cite[Section 1.7, Proposition 20]{serre-lf}.
(Note that  the decomposition group is all of $\Gal(K_i/K_{i-1})$ here, because $K_{i}$ is complete.)
In particular, $(K_i)_{\nu}/ (K_{i-1})_{\nu}$ is an abelian (and hence, solvable) extension. 
Consequently, $\lev_{k_{\nu}}((K_{i})_{\nu}/ (K_{i-1})_{\nu}) \leqslant 1$ by Lemma~\ref{lem.radical}. We conclude that 
the inequality~\eqref{e.valuation} holds in this case as well.
\end{proof}

\section{The level $d$ closure of a field}
\label{sect.level-d}

\begin{defn} \label{def.d-closure}
Let $K$ be a field containing $k$
and $\overline{K}$ be an algebraic closure of $K$ and $d \geqslant 1$ be an integer. We define the level $d$ closure $K^{(d)}$ 
of $K$ in $\overline{K}$ to be the compositum of all intermediate extensions $K \subset L \subset \overline{K}$ such that
$[L: K] < \infty$ and $\lev_k(L/K) \leqslant d$. 
Clearly $K^{(1)} \subset K^{(2)} \subset K^{(3)} \subset \ldots$ . Up to isomorphism
(over $K$) the level $d$ closure $K^{(d)}$ depends only on $K$ and not on the choice of $\overline{K}$. We will say that
$K$ is closed at level $d$ if $K = K^{(d)}$, i.e., if $K$ has no non-trivial extensions of level $\leqslant d$.
\end{defn}

\begin{remark} \label{rem.0-closed}
If $d = 0$, then $K^{(0)} = \overline{k} K$, where $\overline{k}$ denotes the algebraic closure of $k$ and
the compositum is taken in $\overline{K}$. In particular, $K$ is closed at level $0$ if and only if $K$ contains an algebraic closure of $k$.
This follows directly from Lemma~\ref{lem.algebraic}.
\end{remark}

In the case, where $k$ is an algebraically closed field and $K = k(x_1, \ldots, x_n)$ is a purely transcendental extension,
Definition~\ref{def.d-closure} appeared in the short note of Arnold and Shimura in~\cite{browder}, pp.~45-46. 
In this section we will prove the following properties of level $d$ closure.  It seems likely that Arnold and Shimura 
had something like this in mind, through I have not encountered any explicit statements along these lines in the literature.

\begin{prop} \label{prop.level-d.1} Let $k \subset K \subset E$ be fields and $d \geqslant 0$ be an integer.

\smallskip
(a) Consider an intermediate field $K \subset L \subset \overline{K}$ such that  $[L:K] < \infty$.  
Then $\lev_k(L/K) \leqslant d$ if and only if $L \subset K^{(d)}$.

\smallskip
(b) $K^{(d)} \subset E^{(d)}$. Moreover, if $E$ is a finite extension of $K$ and $\lev_k(E/K) \leqslant d$, then equality holds,
$K^{(d)} = E^{(d)}$.

\smallskip
(c) $E^{(d)} = \bigcup \, E_{\rm f.g.}^{(d)}$, where the union is taken over
the intermediate fields $K \subset E_{\rm f.g.} \subset E$ with $E_{\rm f.g.}$ finitely generated over $K$.

\smallskip
(d) $(K^{(d)})^{(n)}= K^{(n)}$ for every $n \geqslant d$. In particular, $K^{(d)}$ is closed at level $d$.
\end{prop}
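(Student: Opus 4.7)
All four parts follow by combining the elementary lemmas of Sections~\ref{sect.prel1}--\ref{sect.level}, with part (c) requiring the bulk of the work.

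\textbf{Part (a).} One direction is immediate from Definition~\ref{def.d-closure}. For the converse, write $L = K(\gamma_1, \ldots, \gamma_s)$ with each $\gamma_i \in K^{(d)}$; by definition each $\gamma_i$ lies in some finite extension $M_i/K$ with $\lev_k(M_i/K) \leqslant d$. Applying Lemma~\ref{lem.lev1} with $K' = M_1 \cdots M_{i-1}$ and then Lemma~\ref{lem.lev4} inductively yields $\lev_k(M_1 \cdots M_s/K) \leqslant d$, and Remark~\ref{rem.def-level}(2) gives the same bound for $L$.

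\textbf{Part (b).} The inclusion $K^{(d)} \subset E^{(d)}$: any finite $L/K$ with $\lev_k(L/K) \leqslant d$ satisfies $\lev_k(LE/E) \leqslant d$ by Lemma~\ref{lem.lev1}, so $L \subset LE \subset E^{(d)}$ by part (a). When $E/K$ is finite of level $\leqslant d$, the reverse inclusion follows by part (a) and Lemma~\ref{lem.lev4}: any finite $M/E$ with $\lev_k(M/E) \leqslant d$ satisfies $\lev_k(M/K) \leqslant d$.

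\textbf{Part (c).} This is where the main work lies. The inclusion $\bigcup E_{\rm f.g.}^{(d)} \subset E^{(d)}$ is immediate from part (b). For the reverse inclusion, fix $\alpha \in E^{(d)}$ and choose by part (a) a tower $E = E_0 \subset E_1 \subset \cdots \subset E_m$ with $\alpha \in E_m$ and $\ed_k(E_i/E_{i-1}) \leqslant d$, which by Remark~\ref{rem.def-level}(4) we may take to consist of simple extensions $E_i = E_{i-1}(\beta_i)$. By Lemma~\ref{lem.algebras2} each step descends to $F_i/K_{i-1}$ with $K_{i-1} \subset E_{i-1}$ finitely generated over $k$ and $\trdeg_k(K_{i-1}) \leqslant d$; by Lemma~\ref{lem.descent1} we may arrange $F_i = K_{i-1}[x]/(p_i(x))$ with $\beta_i$ a root of $p_i$. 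The key observation is that $\alpha$ is a polynomial in the $\beta_i$'s over $E$, and each coefficient of $p_i$ is itself a polynomial in $\beta_1,\ldots,\beta_{i-1}$ with coefficients in $E$, and similarly each generator of each $K_{i-1}$; collecting all of these $E$-coefficients (finitely many) into a subfield $E_{\rm f.g.} \subset E$ finitely generated over $K$, one sets $E_{\rm f.g.}^i := E_{\rm f.g.}(\beta_1,\ldots,\beta_i)$ and checks that the minimal polynomial of $\beta_i$ over $E_{\rm f.g.}^{i-1}$ divides $p_i$ and that $K_{i-1} \subset E_{\rm f.g.}^{i-1}$. Lemma~\ref{lem.ed1}, applied with $K = K_{i-1}$, $L = F_i$, $K' = E_{\rm f.g.}^{i-1}$, $L' = E_{\rm f.g.}^i$, yields $\ed_k(E_{\rm f.g.}^i/E_{\rm f.g.}^{i-1}) \leqslant \ed_k(F_i/K_{i-1}) \leqslant d$. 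Since $\alpha \in E_{\rm f.g.}^m$, part (a) gives $\alpha \in E_{\rm f.g.}^{(d)}$. The main obstacle here is managing the bookkeeping so that the subfield $E_{\rm f.g.} \subset E$ contains enough data to descend the entire tower simultaneously.

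\textbf{Part (d).} For $K^{(n)} \subset (K^{(d)})^{(n)}$: any finite $L/K$ with $\lev_k(L/K) \leqslant n$ satisfies $\lev_k(L \cdot K^{(d)}/K^{(d)}) \leqslant n$ by Lemma~\ref{lem.lev1}. For the reverse inclusion, apply part (c) with $E = K^{(d)}$: every element of $(K^{(d)})^{(n)}$ lies in $E_{\rm f.g.}^{(n)}$ for some $E_{\rm f.g.} \subset K^{(d)}$ finitely generated over $K$. Since $K^{(d)}$ is algebraic over $K$, such an $E_{\rm f.g.}$ is in fact finite over $K$; by part (a) we have $\lev_k(E_{\rm f.g.}/K) \leqslant d \leqslant n$, so part (b) gives $E_{\rm f.g.}^{(n)} = K^{(n)}$. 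Taking $d = n$ specializes to the statement that $K^{(d)}$ is closed at level $d$.
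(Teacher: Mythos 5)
Your proofs of parts (a), (b) and (d) are correct and essentially identical to the paper's (for (d) the paper first reduces to the case $n=d$ and then invokes (c) and (b), whereas you handle general $n$ directly via (c) and (b); this is an immaterial difference). Part (c) is where your route genuinely diverges. The paper factors the hard content into a standalone Lemma (its Lemma~\ref{lem.lev2}), which descends a finite extension $L/E$ of level $\leqslant d$ to $L_0/E_0$ with $E_0 \subset E$ finitely generated over the \emph{base field} $k$; it then reduces to the case $K=k$, verifies that the union $U=\bigcup E_{\rm f.g.}^{(d)}$ is a subfield containing $E$, and concludes $L = EL_0 \subset U$. You instead descend the entire splitting tower in one pass to composites $E_{\rm f.g.}(\beta_1,\ldots,\beta_i)$ with a single subfield $E_{\rm f.g.} \subset E$ finitely generated over $K$, chosen to contain the $E$-coefficients of the generators of each descent field $K_{i-1}$ and of $\alpha$; Lemma~\ref{lem.ed1} then controls $\ed_k$ at each stage exactly as in the paper's proof of Lemma~\ref{lem.lev4}. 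Your version buys a more direct argument that avoids both the reduction to $K=k$ and the ``union is a subfield'' verification, at the cost of heavier inline bookkeeping (which you correctly identify as the crux and which does close up: $K_{i-1}\subset E_{\rm f.g.}^{\,i-1}$ forces $p_i\in E_{\rm f.g.}^{\,i-1}[x]$, so the degree of $\beta_i$ over $E_{\rm f.g.}^{\,i-1}$ can only drop, which is precisely the situation Lemma~\ref{lem.ed1} is designed for). The paper's factoring is slightly more modular but proves a stronger descent statement (over $k$ rather than over $K$) than is strictly needed here. Both arguments are sound.
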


Our proof of Proposition~\ref{prop.level-d.1} will rely on the following lemma.

\begin{lem} \label{lem.lev2} Let $k \subset K \subset L$ be field extensions such that $[L:K] < \infty$. 
Then $L/K$ descends to some $L'/K'$, where $K'$ is finitely generated over $k$ and $\lev_k(L'/K') = \lev_k(L/K)$.
\end{lem}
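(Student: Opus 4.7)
The plan is to descend the entire tower witnessing $\lev_k(L/K)\le d$ in a single application of Lemma~\ref{lem.algebras1}, producing a finitely generated $K'\subset K$ together with a parallel tower for the descent $L'/K'$. Set $d:=\lev_k(L/K)$ and fix a tower $K=K_0\subset K_1\subset\cdots\subset K_m$ with $L\subset K_m$ and $\ed_k(K_i/K_{i-1})\le d$; by Remark~\ref{rem.def-level}(4) I may take each step simple, $K_i=K_{i-1}(\beta_i)$ with minimal polynomial $h_i(x)$. For each $i$, Lemma~\ref{lem.algebras2} produces a descent $E_i/E_{i-1}$ of $K_i/K_{i-1}$ with $E_{i-1}\subset K_{i-1}$ finitely generated over $k$ and $\trdeg_k(E_{i-1})\le d$; by Lemma~\ref{lem.descent1} $E_i/E_{i-1}$ is simple, so after replacing $\beta_i$ by a suitable lift of its generator I may assume $E_i=E_{i-1}(\beta_i)$ and $h_i\in E_{i-1}[x]$. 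Fix a finite generating set $T_i\subset E_{i-1}$ over $k$.

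Next, choose a $K$-basis $b_1,\dots,b_n$ of $L$ and fix the monomial basis $\{\beta_1^{e_1}\cdots\beta_m^{e_m}:0\le e_i<\deg h_i\}$ of $K_m/K$, which contains $1$. Each $t\in\bigcup_i T_i$ lies in some $K_{i-1}$ and hence expands as a $K$-linear combination of monomials in $\beta_1,\dots,\beta_{i-1}$; let $U\subset K$ be the finite set of coefficients appearing. Applying Lemma~\ref{lem.algebras1} to $A=K_m$ with $S=\{b_j,\beta_i\}\cup\bigcup_i T_i\cup U$ and the chosen basis produces a finitely generated $K'\subset K$ and a $K'$-subalgebra $K'_m\subset K_m$ (a field, since it is a finite-dimensional domain over $K'$) with $K'_m\otimes_{K'}K\cong K_m$ and $S\subset K'_m$. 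Because each $u\in U\subset K$ has trivial expansion $u=u\cdot 1$ in the monomial basis, the construction of $K'$ in the proof of Lemma~\ref{lem.algebras1} places $U\subset K'$. Define $K'_i:=K'(\beta_1,\dots,\beta_i)\subset K'_m$ and $L':=K'(b_1,\dots,b_n)\subset K'_m$.

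The main obstacle is that the $E_{i-1}$ for $i\ge 2$ need not lie in $K$; the auxiliary set $U$ is exactly what lets me absorb them into a single finitely generated subfield of $K$. Since $U\subset K'$, we have $T_i\subset K'(\beta_1,\dots,\beta_{i-1})=K'_{i-1}$, hence $E_{i-1}\subset K'_{i-1}$, giving $h_i\in K'_{i-1}[x]$. As $h_i$ is irreducible over the larger field $K_{i-1}$, it remains irreducible over $K'_{i-1}$, so $K'_i\cong K'_{i-1}(\beta_i)\cong K'_{i-1}\otimes_{E_{i-1}}E_i$ and $\ed_k(K'_i/K'_{i-1})\le\trdeg_k(E_{i-1})\le d$. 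Each $b_j\in K'_m=K'(\beta_1,\dots,\beta_m)$ gives $L'\subset K'_m$, so the tower $K'=K'_0\subset\cdots\subset K'_m$ certifies $\lev_k(L'/K')\le d$. Flatness of $K/K'$ makes $L'\otimes_{K'}K\hookrightarrow K'_m\otimes_{K'}K=K_m$ injective with image $K\cdot L'=L$, so $L/K$ descends to $L'/K'$; Lemma~\ref{lem.lev1} applied with the field extension $K'\subset K$ then supplies the reverse inequality $\lev_k(L/K)\le\lev_k(L'/K')$, completing the proof.
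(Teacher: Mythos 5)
Your proof is correct and follows essentially the same strategy as the paper's: descend each rung of a simple tower via Lemmas~\ref{lem.algebras2} and~\ref{lem.descent1}, apply Lemma~\ref{lem.algebras1} to $K_m$ with one finite set $S$ collecting all the relevant generators so as to obtain a single finitely generated $K'$, and then rebuild the tower over $K'$. The differences are only in the bookkeeping --- you take $K_i' = K'(\beta_1,\dots,\beta_i)$ and verify $[K_i':K_{i-1}']=[K_i:K_{i-1}]$ via irreducibility of $h_i$ over the subfield $K_{i-1}'$ (with the auxiliary coefficient set $U$ forcing $E_{i-1}\subset K_{i-1}'$), where the paper takes $K_i'=K_m'\cap K_i$ and uses a product-of-degrees count; you also spell out the reverse inequality via Lemma~\ref{lem.lev1}, which the paper leaves implicit.
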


\begin{proof} Set $d = \lev_k(L/K)$ and choose
a tower $K = K_0 \subset K_1 \subset \ldots \subset K_m$ of finite field extensions such that $L$ embeds into $K_m$ over $K$ 
and $\ed_k(K_i/K_{i-1}) \leqslant d$, as in Definition~\ref{def.level}. By Remark~\ref{rem.def-level}(4), we may assume that each
intermediate extension $K_i/K_{i-1}$ is simple.

By Lemma~\ref{lem.algebras2}, $K_i/K_{i-1}$ descends to some $E_i/F_{i-1}$, where
$E_{i} \subset K_i$, $k \subset F_{i-1} \subset K_{i-1}$, $F_{i-1}$ is finitely generated over $k$ and 
$\trdeg_k(F_{i-1}) = \ed_k(E_i/F_{i-1}) =  \ed_k(K_i/K_{i-1}) \leqslant d$. Let $G_{i-1}$ be a finite set of generators for $F_{i-1}$ over $k$.
By Lemma~\ref{lem.descent1}, $E_i/F_{i-1}$ is simple, say, $E_i = F_{i-1}(\alpha_i)$.

Similarly, by Lemma~\ref{lem.algebras1}, the field extension $L/K$ descends to $E'/F'$, where the intermediate field 
$k \subset F' \subset K$ is finitely generated over $k$. Let $H$ be a finite set of generators for $F'$ over $k$ and
$B$ be a $F'$-vector space basis for $E'$. These notations are summarized in the diagram below, where $\hookrightarrow$ indicates descent.
\[
  \xymatrix{ & & K_m \ar@{-}[d] &  & \\ 
 E' \ar@{^{(}->}[r] &    L \ar@{-}[ur] & \vdots \ar@{-}[d] &  &  \\
   &  & K_i \ar@{-}[d]  & \ar@{_{(}->}[l] E_i \ar@{-}[d] \ar@{=}[r] & F_{i-1}(\alpha_i) \\
   &    & K_{i-1} \ar@{-}[d]   & \ar@{_{(}->}[l] F_{i-1}  & \\
   &      & \vdots \ar@{-}[d] &  & \\
   & F' \ar@{^{(}->}[r] \ar@{-}[luuuu]  & K_0 \ar@{-}[luuuu] \ar@{=}[r] & K &  }
\]
By Lemma~\ref{lem.algebras1}, $K_m/K$ descends to some $K_m'/K'$ such 
that $k \subset K' \subset K$, $K'$ is finitely generated over $k$ and $K_m'$ contains the finite subset 
\[ G_0 \cup \ldots \cup G_{m-1} \cup \{ \alpha_1, \ldots, \alpha_m \} \cup H \cup B  \]
of $K_m$. Consider the tower
\begin{equation} \label{e.lev2a} K' = K_0' \subset K_1' \subset \ldots \subset K_m', \end{equation}
where $K_i' = K_m' \cap K_i$ for each $i$. Note that $K_{i-1}'$ contains $k$ and $G_{i-1}$ and hence, $k(G_{i-1}) = F_{i-1}$.
Moreover, since $K_i'$ contains $K_{i-1}'$ and $\alpha_i$, it also contains $F_{i-1}(\alpha_i) = E_i$.

Since $K_m/K$ descends to $K_m'/K'$, we have 
\begin{align} \label{e.lev2b}
[K_m': K_{m-1}'] \cdot [K_{m-1}': K_{m-2}'] \cdot \ldots \cdot [K_1' : K_0'] = 
[K_m': K'] = \\ \nonumber [K_m: K] = [K_m : K_{m-1}] \cdot [K_{m-1}: K_{m-2}] \cdot \ldots \cdot [K_1 : K_0] . 
\end{align}
On the other hand, since $\alpha_i \in K_i'$ has degree $[E_i: F_{i-1} ] = [K_i: K_{i-1}]$ over $K_{i-1}$,
it has degree $\geqslant [K_i: K_{i-1}]$ over $K_{i-1}'$. Thus $[K_i' : K_{i-1}'] \geqslant [K_i :K_{i-1}]$ for each $i$.
In view of~\eqref{e.lev2b}, we conclude that $[K_i': K_{i-1}'] = [K_i : K_{i-1}]$ for each $i$. In other words,
$K_i/K_{i-1}$ descends to $K_{i}'/K_{i-1}'$  which, in turn, descends to $E_i/F_{i-1}$. Thus
\[  \ed_k(K_i'/K_{i-1}') \leqslant \ed_k(E_i/F_{i-1}) \leqslant \trdeg_k(F_{i-1}) \leqslant d. \]
Finally, note that $K' = K_0'$ contains $H$ and thus $K'$ contains $k(H) = F'$. Set $L' = K_m' \cap L$.
Since $K_m'$ contains $B$, this tells us that $L/K$ descends to $L'/K'$. By our construction,
$L' = K_m' \cap L$ embeds into $K_m'$ over $K'$. 
The tower~\eqref{e.lev2a} now shows that $\lev_k(L'/K') \leqslant d$, as desired.
\end{proof}

\begin{proof}[Proof of Proposition~\ref{prop.level-d}] (a) If $\lev_k(L/K) \leqslant d$, then $L \subset K^{(d)}$ by the definition of $K^{(d)}$.
Conversely, if $L \subset K^{(d)}$ and $[L : K] < \infty$, then $L$ is contained in a compositum $L_1 L_2 \ldots L_n$
of finitely many finite extension $L_i/K$ such that $\lev_k(L_i/K) \leqslant d$ for each $i$. Using Lemmas~\ref{lem.lev1} and~\ref{lem.lev4}
recursively, we see that 
\[ \lev_k(L/K) \leqslant \lev(L_1 \ldots L_n/K) \leqslant d . \]

\smallskip
(b) Recall that $K^{(d)}$ is generated by finite extensions $L/K$ of level $\leqslant d$. 
In order to prove that $K^{(d)} \subset E^{(d)}$ it suffices to show that every such $L$ is contained in $E^{(d)}$. 
This follows from the inequality $\lev_k(LE/E) \leqslant \lev_k(L/K)$ of Lemma~\ref{lem.lev1}. 

Now suppose $E$ is a finite extension of $K$ and $\lev_k(E/K) \leqslant d$. We want to prove that in this case 
$E^{(d)} \subset K^{(d)}$. It suffices to show that every finite extension $M/E$ of level $\leqslant d$ 
lies in $K^{(d)}$, i.e., $\lev_k(M/K) \leqslant d$. This follows from Lemma~\ref{lem.lev4}.

\smallskip
(c) Set $U = \bigcup E_{\rm f.g.}^{(d)}$. By part (b), $E_{\rm f.g.}^{(d)} \subset E^{(d)}$ for each finitely generated field
$K \subset E_{\rm f.g.} \subset E$. Hence, $U \subset E^{(d)}$. To prove the opposite inclusion, we proceed in three steps.

\smallskip
(i) We reduce to the case, where $K = k$. Indeed, every intermediate field $k \subset E_0 \subset E$ such that $E_0$ is finitely generated over $k$
lies in $E_1 = K E_0$ which is finitely generated over $K$. By part (b), $E_0^{(d)} \subset E_1^{(d)} \subset U$. Thus
\[ \bigcup \, E_0^{(d)} \subset U \subset  E^{(d)} \]
where the first union is over finitely generated subextensions $k \subset E_0 \subset E$.
If we know that the first union is $E^{(d)}$, then both of these inclusions are equalities, and $U = E^{(d)}$, as desired.
From now on we will assume that $K = k$.

\smallskip
(ii) $U$ is a subfield of $E^{(d)}$. Indeed, suppose $x_1 \in E_1^{(d)}$ and $x_2 \in E_2^{(d)}$, where $k \subset E_i \subset E$ and $E_i$ is finitely 
generated over $k$ for $i = 1, 2$. Assume $x_1 \neq 0$. We want to show that $x_1 \pm x_2$, $x_1 \cdot x_2$ and $x_1^{-1}$ all lie in $U$. Indeed,
the composite $E_3 = E_1 E_2$ (in $E$) is also finitely generated over $k$. Hence $E_3^{(d)}$ is contained in $U$. By part (b),
$x_1 \in E_1^{(d)} \subset E_3^{(d)}$ and $x_2 \in E_2^{(d)} \subset E_3^{(d)}$. 
We conclude that $x_1 \pm x_2$, $x_1 \cdot x_2$ and $x_1^{-1} \in E_3^{(d)} \subset U$, as desired.

\smallskip
(iii) $U$ contains $E$. This is because $U$ contains $k(x)$ for every $x \in E$.

\smallskip
(iv) $U$ contains $E^{(d)}$.
It suffices to show that $U$ contains every finite extension $L/E$ such that $\lev_k(L/E) \leqslant d$.
Recall that by Lemma~\ref{lem.lev2}, $L/E$ descends to $L_0/E_0$ for some field $k \subset E_0 \subset E$ such that $E_0$ is finitely 
generated over $k$ and $\lev_k(L_0/E_0) = \lev_k(L/E) \leqslant d$. Thus $L_0 \subset E_0^{(d)} \subset U$. Since $U$ is a subfield of
$\overline{E}$ containing both $E$ and $L_0$, it contains $L = E L_0$.

\smallskip
(d) Part (b) tells us that $K^{(n)} \subset (K^{(d)})^{(n)}= (K^{(n)})^{(n)}$. Thus it suffices to show that $(K^{(n)})^{(n)} = K^{(n)}$, i.e.,
that $K^{(n)}$ is closed at level $n$. In other words, we may assume without loss of generality that $n = d$.

Let $E = K^{(d)}$. We want to show that $E^{(d)} = K^{(d)}$.
By part (c), it suffices to show that $L^{(d)} = K^{(d)}$ for every 
intermediate extension $K \subset L \subset E = K^{(d)}$, where $L$ is finitely generated (or equivalently, finite) over $K$.
Since $K \subset L \subset K^{(d)}$, part (a) tells us that $\lev_k(L/K) \leqslant d$. The desired equality, $L^{(d)} = K^{(d)}$,
is now given by the second assertion in part (b). 
\end{proof}

\begin{cor} \label{cor.level-1}
Suppose $K \in \Fields_k$ is closed at level $d \geqslant 1$. Then $K$ is perfect and solvably closed.
\end{cor}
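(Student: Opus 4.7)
The plan is to apply Lemma~\ref{lem.radical} together with Proposition~\ref{prop.level-d.1}(a). The point is that for $d \geqslant 1$ the hypothesis $K = K^{(d)}$ means $K$ has no non-trivial finite extensions of level $\leqslant d$, and in particular none of level $\leqslant 1$; but Lemma~\ref{lem.radical} already identifies three classes of extensions (solvable, radical, purely inseparable) whose level is automatically $\leqslant 1$. Matching these up gives the corollary essentially for free.

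First I would handle perfectness. Let $L/K$ be any finite purely inseparable extension. By Lemma~\ref{lem.radical}(c), $\lev_k(L/K) \leqslant 1 \leqslant d$, so Proposition~\ref{prop.level-d.1}(a) places $L \subset K^{(d)}$. Since $K = K^{(d)}$ by hypothesis, this forces $L = K$. Thus $K$ admits no non-trivial finite purely inseparable extension, which is equivalent to $K$ being perfect.

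Next, for solvable closedness the argument runs in exactly the same way, with Lemma~\ref{lem.radical}(a) in place of (c): any finite solvable extension $L/K$ satisfies $\lev_k(L/K) \leqslant 1 \leqslant d$, so Proposition~\ref{prop.level-d.1}(a) again yields $L \subset K^{(d)} = K$ and hence $L = K$. This is the definition of $K$ being solvably closed.

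I do not anticipate any serious obstacle; the corollary is little more than an unpacking of Lemma~\ref{lem.radical} in the language of level $d$ closures. The only mild point to keep in mind is that the hypothesis is stated for arbitrary $d \geqslant 1$, but by Definition~\ref{def.d-closure} the chain $K^{(1)} \subset K^{(2)} \subset \ldots$ is nested, so closure at any level $d \geqslant 1$ is at least as strong as closure at level $1$, which is all that the argument uses.
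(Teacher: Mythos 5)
Your proof is correct and follows exactly the same route as the paper: apply Lemma~\ref{lem.radical} to see that solvable and purely inseparable extensions have level $\leqslant 1 \leqslant d$, then use Proposition~\ref{prop.level-d.1}(a) and the hypothesis $K = K^{(d)}$ to force $L = K$. No gaps.
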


\begin{proof}
Suppose $L/K$ is a solvable or purely inseparable extension. Our goal is to show that $K = L$. Indeed, by Lemma~\ref{lem.radical},
$\lev_k(L/K) \leqslant 1$. By Proposition~\ref{prop.level-d.1}(a),
$K \subset L \subset K^{(d)}$. Since $K$ is closed at level $d$, $K^{(d)} = K$ and thus
$K = L$.
\end{proof} 

\section{The resolvent degree of a functor}
\label{sect.rd-functor}

Following Merkurjev, Berhuy and Favi~\cite{berhuy-favi}, we will now define essential dimension for a broader class of objects, beyond finite 
field extensions. Let $k$ be a base field, and $\mathcal{F} \colon \Fields_k \to \Sets$ be a functor from the category 
of field extensions $K/k$ to the category of sets. All functors in this paper will be assumed to be covariant.
We think of $\mathcal{F}$ as specifying the type of object we are considering, and
$\mathcal{F}(K)$ as the set of objects of this type defined over $K$. Given a field extension $k \subset K \subset K'$, we think of 
the natural map $\mathcal{F}(K) \to \mathcal{F}(K')$ as base change. The image of $\alpha \in \mathcal{F}(K)$ under this map will be denoted by $\alpha_{K'}$.

\begin{defn} \label{def.ed-functor}
Any object $\alpha \in \mathcal{F}(K)$ in the image of the natural map $\mathcal{F}(K_0) \to \mathcal{F}(K)$ is 
said to {\em descend} to $K_0$. The essential dimension $\ed_k(\alpha)$ is defined as the minimal value of $\trdeg_k(K_0)$, 
where the minimum is taken over all intermediate fields $k \subset K_0 \subset K$ such that $\alpha$ descends to $K_0$.
\end{defn}

\begin{example} \label{ex.functor} Consider the functor $\operatorname{Alg} \colon \Fields_k \to \Sets'$, where
$\operatorname{Alg}(K)$ is the set of isomorphism classes of finite-dimensional $K$-algebras.
Here natural map $\operatorname{Alg}(K) \to \operatorname{Alg}(K')$ takes a $K$-algebra $A$ 
to the $K'$-algebra $K' \otimes_K A$.
Let $K \in \Fields_k$ and $A$ be a finite-dimensional $K$-algebra. 
If we view $A$ as an object in $\mathcal{F}(K)$, then
$\ed_k(A)$ given by Definition~\ref{def.ed-functor} is the same as $\ed_k(A)$ given by Definition~\ref{def.ed-algebra}.
\end{example}

Now let $\mathcal{F}$ be a functor from the category $\Fields_k$ of field extensions $K/k$ to the category $\Sets'$ 
of sets with a marked element. We will denote the marked element in $\mathcal{F}(K)$ by $1$ and will refer to it as being ``split". 
We will say that a field extension $L/K$ splits an object $\alpha \in \mathcal{F}(K)$ 
if $\alpha_L = 1$.  Let us assume that 
\begin{align}  \label{e.splittable} \text{for every field $K/k$ and every $\alpha \in \mathcal{F}(K)$, $\alpha$ can be} \\
\text{split by a field extension $L/K$ of finite degree.} \nonumber
\end{align} 
This is a strong condition on $\mathcal{F}$; in particular, it implies that
$\mathcal{F}(K) = \{ 1 \}$ whenever $K$ is algebraically closed.

\begin{defn} \label{def.rd}
Let $\mathcal{F} \colon \Fields_k \to \Sets'$ be a functor satisfying condition~\eqref{e.splittable}, $K/k$ be a field extension and 
$\alpha \in \mathcal{F}(K)$.

\smallskip
(a) The resolvent degree $\rd_k(\alpha)$ is the minimal integer $d \geqslant 0$ such that $\alpha$ is split by a field extension $L/K$ of level $d$
(or equivalently, of level $\leqslant d$).

\smallskip
(b) The resolvent degree $\rd_k(\mathcal{F})$ of the functor $\mathcal{F}$ is the maximal value of $\rd_k(\alpha)$, as $K$ ranges over all fields containing $k$ and $\alpha$ ranges over $\mathcal{F}(K)$.
\end{defn}

\begin{remarks}
(1) Note that the level $\lev_k(L/K)$ plays a similar role in Definition~\ref{def.rd} to the role played by the transcendence degree $\trdeg_k(K_0)$
in Definition~\ref{def.ed-functor}.

(2) Condition~\eqref{e.splittable} ensures that $\rd_k(\alpha)$ is finite for every $K \in \Fields_k$ and every $\alpha \in \mathcal{F}(K)$. On the other hand, 
$\rd_k(\mathcal{F})$ can a priori be infinite, even though no examples where $\rd_k(\mathcal{F}) > 1$ are known.
\end{remarks}

\begin{example} \label{ex.etale}
Consider the functor $\text{\'Et}_n \colon \Fields_k \to \Sets'$, where $\text{\'Et}(K)$ is the set of isomorphism 
classes of $n$-dimensional \'etale algebras $L/K$. Recall that
an $n$-dimensional \'etale algebra $L$ is a direct product of the form $L = L_1 \times \ldots \times L_r$, 
where each $L_i$ is a finite separable field extension of $K$ and $[L_1:K] + \ldots + [L_r: K] = n$.

\smallskip
(a) If $L/K$ is a separable field extension of degree $n$, and $[L]$ is its class in $\text{\'Et}_n(K)$, then $\rd_k([L]) = \lev_k(L/K)$.

\smallskip
(b) More generally, if $L = L_1 \times \ldots \times L_r$ is a direct product of separable extensions of $K$ as above, and $[L]$
is its class in $\text{\'Et}_n(K)$, then
$\rd_k([L]) = \max_{i = 1, \ldots, r} \lev_k(L_i/K)$.

\smallskip
(c) $\rd_k(\text{\'Et}) = \max \, \lev_k(L/K)$, where the maximum is taken over all separable field extensions $L/K$
of degree $\leqslant n$.
\end{example}

\begin{proof} (a) By the Primitive Element Theorem, $L \simeq_K K[x]/(f(x))$, where $f(x) \in K[x]$ is 
an irreducible separable polynomial of
degree $n$. A field extension $L'/K$ splits $[L]$ if and only if $f(x)$ splits as a product of linear factors over $L'$. Equivalently,
$L'$ splits $L$ if and only if $L'$ contains the normal closure $L^{\rm norm}$ of $L$ over $K$. By Remark~\ref{rem.def-level}(2),
\[ \rd_k([L]) = \min \{ \lev_k(L'/K) \; \big| \; L^{\rm norm} \subset L' \} =  \lev_k(L^{\rm norm}/K) . \]
On the other hand, by Lemma~\ref{lem.norm}, $\lev_k(L^{\rm norm}/K) = \lev_k(L/K)$.

(b) A field extension $L'/K$ splits $[L]$ if and only if it splits each $[L_i] \in \text{\'Et}_{[L_i:K]}(K)$. Hence,
by part (a), $\rd_k([L]) \geqslant \max_{i = 1, \ldots, r} \rd_k([L_i]) = \max_{i = 1, \ldots, r} \lev_k(L_i/K)$. To prove the opposite inequality,
take $L'$ to be the compositum of $L_i$ over $K$. Then $L'$ splits $[L]$. Moreover, combining Lemmas~\ref{lem.lev1} and~\ref{lem.lev4}, 
we obtain
\[ \rd_k([L]) \leqslant \lev_k(L'/K) \leqslant \max_{i = 1, \ldots, r} \lev_k(L_i/K). \]

(c) is an immediate consequence of (b).
\end{proof} 

\begin{lem} \label{lem.functor-3} Let $\mathcal{F} \colon \Fields_k \to \Sets'$ be a functor satisfying condition~\eqref{e.splittable}, $K/k$ be a field extension and $\alpha \in \mathcal{F}(K)$. Then 

(a) $\rd_k(\alpha_{K'}) \leqslant \rd_k(\alpha)$ for any field $K'$ containing $K$.

\smallskip
(b) $\rd_k(\alpha) \leqslant \ed_k(\alpha)$.

\smallskip
(c) $\rd_k(\mathcal{F}) \leqslant \ed_k(\mathcal{F})$.
\end{lem}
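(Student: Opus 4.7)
The plan is to reduce each statement to the splitting construction for field extensions and then invoke Lemma~\ref{lem.lev1} (which controls level under composita) together with Remark~\ref{rem.def-level}(3) (which bounds level by essential dimension). Parts (a) and (c) are essentially formal, while (b) requires one genuine construction.

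For (a), set $d = \rd_k(\alpha)$. By Definition~\ref{def.rd}(a), there exists a finite field extension $L/K$ with $\lev_k(L/K)\leqslant d$ and $\alpha_L = 1$. Fix a compositum $L' = K'L$ of $K'$ and $L$ over $K$ inside some common overfield. Then $L'/K'$ is finite, and by functoriality of $\mathcal{F}$ applied to the inclusions $L \hookrightarrow L'$ and $K' \hookrightarrow L'$, we have $(\alpha_{K'})_{L'} = \alpha_{L'} = (\alpha_L)_{L'} = 1$. Hence $L'$ splits $\alpha_{K'}$. Lemma~\ref{lem.lev1} gives $\lev_k(L'/K') \leqslant \lev_k(L/K) \leqslant d$, so $\rd_k(\alpha_{K'}) \leqslant d$, as required.

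For (b), set $e = \ed_k(\alpha)$. By Definition~\ref{def.ed-functor}, $\alpha$ descends to some $\alpha_0 \in \mathcal{F}(K_0)$ where $k \subset K_0 \subset K$ and $\trdeg_k(K_0) = e$. By hypothesis~\eqref{e.splittable}, there is a finite extension $L_0/K_0$ splitting $\alpha_0$, i.e., $(\alpha_0)_{L_0} = 1$. Choose a compositum $L = K L_0$ of $K$ and $L_0$ over $K_0$; then $[L:K]<\infty$ and by functoriality $\alpha_L = ((\alpha_0)_{L_0})_L = 1$, so $L$ splits $\alpha$. Applying Lemma~\ref{lem.lev1} to the tower $k \subset K_0 \subset L_0$ together with the extension $K_0 \subset K$, we obtain $\lev_k(L/K) \leqslant \lev_k(L_0/K_0)$. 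By Remark~\ref{rem.def-level}(3), $\lev_k(L_0/K_0) \leqslant \ed_k(L_0/K_0)$, and since $L_0/K_0$ trivially descends to itself, $\ed_k(L_0/K_0) \leqslant \trdeg_k(K_0) = e$. Combining these gives $\rd_k(\alpha) \leqslant \lev_k(L/K) \leqslant e = \ed_k(\alpha)$.

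Finally, (c) is a formal consequence of (b): taking the supremum of $\rd_k(\alpha) \leqslant \ed_k(\alpha)$ over all $K \in \Fields_k$ and all $\alpha \in \mathcal{F}(K)$ yields $\rd_k(\mathcal{F}) \leqslant \ed_k(\mathcal{F})$. The only step that is not entirely routine is choosing the right splitting field in (b); the key insight is that one should first descend $\alpha$ to $\alpha_0$ and split $\alpha_0$ there, rather than splitting $\alpha$ directly, so that the bound on $\lev_k$ comes for free from the transcendence degree of the descended base $K_0$.
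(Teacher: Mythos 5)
Your proof is correct and follows essentially the same route as the paper: part (a) via the compositum and Lemma~\ref{lem.lev1}, part (b) by descending to $K_0$, splitting there, and chaining $\lev \leqslant \ed \leqslant \trdeg$, and part (c) by taking suprema. The only cosmetic difference is that in (b) the paper simply cites part (a) for the step $\rd_k(\alpha) \leqslant \rd_k(\alpha_0)$, whereas you inline the compositum construction explicitly; the content is identical.
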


\begin{proof} (a) If $\alpha$ is split by a finite extension $L/K$ such 
that $\lev_k(L/K) = d$, then $\alpha_{K'}$ is split by the finite extension $K' L/K'$
of level $\lev_k(K' L/K') \leqslant d$; see Lemma~\ref{lem.lev1}. 

(b) Set $d = \ed_k(\alpha)$. Then $\alpha$ descends to $\alpha_0 \in \mathcal{F}(K_0)$ for some 
intermediate field $k \subset K_0 \subset K$ such that $\trdeg_k(K_0) =d$.
Since $\mathcal{F}$ satisfies condition~\eqref{e.splittable}, 
$\alpha_0$ is split by some finite extension $L_0/K_0$. Now
\[ \rd_k(\alpha) \leqslant \rd_k(\alpha_0) \leqslant \lev_k(L_0/K_0) \leqslant \ed_k(L_0/K_0) \leqslant d, \]
as desired. Here the first inequality follows from part (a), the second from the definition of
$\rd_k(\alpha_0)$, the third from Remark~\ref{rem.def-level}(3), and the fourth from the fact that $\trdeg_k(K_0) = d$.

(c) is an immediate consequence (b). 
\end{proof}

\begin{lem} \label{lem.functor-2} Let $k \subset k' \subset K$ be field extensions, Let $\mathcal{F} \colon \Fields_k \to \Sets'$ 
be a functor satisfying condition~\eqref{e.splittable} and $\alpha \in \mathcal{F}(K)$. Then

\smallskip
(a) $\rd_k(\alpha) \geqslant \rd_{k'}(\alpha)$. 

\smallskip
(b) Moreover, equality holds if $k'$ is algebraic over $k$. 

\smallskip
(c) Furthermore, there exists an intermediate field $k \subset l_0 \subset k'$
such that $l_0$ is finitely generated over $k$ and $\rd_{l}(\alpha) = \rd_{k'}(\alpha)$ for every field $l$ between $l_0$ and $k'$.
\end{lem}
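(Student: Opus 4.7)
The plan is to mirror the proof of Lemma~\ref{lem.ed0} at the level of resolvent degree, with the main mechanical input now being Lemma~\ref{lem.base-change} (which plays the role that Lemma~\ref{lem.ed0} played in the proof of Definition~\ref{def.ed-algebra}-style statements). In each case I would pick an optimal splitting extension $L/K$ for $\alpha$ and then apply the corresponding part of Lemma~\ref{lem.base-change} to this \emph{one} extension. A key preliminary observation is that whether or not a finite extension $L/K$ splits $\alpha$ depends only on the functor $\mathcal{F}$ evaluated on the tower $K \subset L$, and not at all on the choice of base field $k$ or $k'$; only the value of the level $\lev_{?}(L/K)$ changes when we replace $k$ by $k'$.

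For part (a), I would choose a finite extension $L/K$ splitting $\alpha$ that achieves the minimum $\lev_k(L/K) = \rd_k(\alpha)$; such an $L$ exists because $\mathcal{F}$ satisfies condition~\eqref{e.splittable} and the levels are non-negative integers. The same extension $L/K$ still splits $\alpha$ when we regard $K$ as lying over $k'$, and Lemma~\ref{lem.base-change}(a) yields $\lev_{k'}(L/K) \leqslant \lev_k(L/K)$. Feeding this into Definition~\ref{def.rd}(a) gives $\rd_{k'}(\alpha) \leqslant \lev_{k'}(L/K) \leqslant \rd_k(\alpha)$.

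For part (b), to get the reverse inequality when $k'/k$ is algebraic, I would pick $L/K$ splitting $\alpha$ with $\lev_{k'}(L/K) = \rd_{k'}(\alpha)$, and invoke Lemma~\ref{lem.base-change}(b) to conclude $\lev_k(L/K) = \lev_{k'}(L/K)$, hence $\rd_k(\alpha) \leqslant \rd_{k'}(\alpha)$. Combined with (a), this gives the desired equality.

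For part (c), I would again pick $L/K$ splitting $\alpha$ with $\lev_{k'}(L/K) = \rd_{k'}(\alpha)$, and apply Lemma~\ref{lem.base-change}(c) to this specific $L/K$. This produces an intermediate field $k \subset l_0 \subset k'$, finitely generated over $k$, such that $\lev_l(L/K) = \lev_{k'}(L/K)$ for every $l$ with $l_0 \subset l \subset k'$. For any such $l$, since $L/K$ still splits $\alpha$, we obtain $\rd_l(\alpha) \leqslant \lev_l(L/K) = \lev_{k'}(L/K) = \rd_{k'}(\alpha)$, while part (a) applied to the inclusion $l \subset k'$ gives $\rd_l(\alpha) \geqslant \rd_{k'}(\alpha)$; equality follows. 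I do not expect a genuine obstacle here, since Lemma~\ref{lem.base-change} has already absorbed all the difficulty; the only point to be careful about is to choose one fixed splitting extension $L/K$ for the whole argument in (c), so that the single field $l_0$ delivered by Lemma~\ref{lem.base-change}(c) works uniformly for all intermediate $l$.
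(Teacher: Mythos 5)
Your proposal is correct and follows essentially the same route as the paper: all three parts are reduced to the corresponding parts of Lemma~\ref{lem.base-change} applied to a splitting extension $L/K$, with part (c) using one fixed $L/K$ optimal for $k'$ exactly as the paper does. The only cosmetic difference is that in part (a) you minimize $\lev_k(L/K)$ over splitting extensions first, whereas the paper just observes the inequality $\lev_k(L/K)\geqslant\lev_{k'}(L/K)$ for every splitting extension; these are the same argument.
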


\begin{proof} For every finite extension $L/K$ splitting $\alpha$, we have $\lev_k(L/K) \geqslant \lev_{k'}(L/K)$. Moreover, 
equality holds if $k'$ is algebraic over $k$; see Lemma~\ref{lem.base-change}. This proves (a) and (b).

For part (c), choose a splitting extension $L/K$ such that $d = \lev_{k'}(L/K)$ assumes its minimal possible value,  $d = \rd_{k'}(\alpha)$.
Now choose $l_0$ as in Lemma~\ref{lem.base-change}(c). Then for any intermediate field $l_0 \subset l \subset k'$, 
\[ \rd_l(\alpha) \leqslant \lev_l(L/K) = \lev_{k'}(L/K) = d = \rd_{k'}(\alpha). \]
Combining this inequality with the inequality of part (a), we conclude that $\rd_l(\alpha) = \rd_{k'}(\alpha)$.
\end{proof}

\begin{lem} \label{lem.functor-1} If $k$ is algebraically closed, then $\rd_k(\alpha) > 0$ for any $K \in \Fields_k$ and
any $1 \neq \alpha \in \mathcal{F}(K)$. In particular, $\rd_k(\mathcal{F}) = 0$ if and only if $\mathcal{F}$ 
is the trivial functor, i.e., if and only if $\mathcal{F}(K) = 1$ for every $K \in \Fields_k$. 
\end{lem}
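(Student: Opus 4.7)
The plan is to reduce the first assertion directly to Lemma~\ref{lem.algebraic}, which characterizes finite extensions of level zero. Suppose for contradiction that $\rd_k(\alpha) = 0$ for some $1 \neq \alpha \in \mathcal{F}(K)$. By Definition~\ref{def.rd}(a), there exists a finite extension $L/K$ which splits $\alpha$ and satisfies $\lev_k(L/K) = 0$. Lemma~\ref{lem.algebraic} tells us that, since $k$ is algebraically closed, $\lev_k(L/K) = 0$ forces $L \subset \overline{k}K = K$, so in fact $L = K$. But then $\alpha = \alpha_L = 1$, contradicting the hypothesis $\alpha \neq 1$.

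For the second assertion, note that if $\mathcal{F}$ is trivial, then $\rd_k(\alpha) = 0$ for every $\alpha$ trivially, and so $\rd_k(\mathcal{F}) = 0$. Conversely, if $\rd_k(\mathcal{F}) = 0$, then $\rd_k(\alpha) = 0$ for every $K \in \Fields_k$ and every $\alpha \in \mathcal{F}(K)$. By the first part (contrapositive), this means $\alpha = 1$ for all such $\alpha$, so $\mathcal{F}(K) = \{1\}$ for every $K$, i.e., $\mathcal{F}$ is trivial.

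There is essentially no obstacle here: the entire content of the lemma is packaged inside Lemma~\ref{lem.algebraic}, which was proved earlier. The only thing to be careful about is the convention that condition~\eqref{e.splittable} guarantees $\rd_k(\alpha)$ is a well-defined non-negative integer, so that the value $0$ is indeed attained only when a splitting extension of level $0$ exists.
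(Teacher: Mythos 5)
Your proof is correct and is exactly the argument the paper has in mind: its own proof consists of the single line ``Immediate from Lemma~\ref{lem.algebraic},'' and your write-up simply makes explicit the reduction (a splitting extension $L/K$ of level $0$ must satisfy $L \subset \overline{k}K = K$, forcing $\alpha = \alpha_L = 1$). Nothing further is needed.
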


\begin{proof} Immediate from Lemma~\ref{lem.algebraic}.
\end{proof}

\begin{lem} \label{lem.functor3} Let $\mathcal{F}_1, \mathcal{F}_2, \mathcal{F}_3$ be functors $\Fields_k \to \Sets'$ satisfying~\eqref{e.splittable}.

\smallskip
(a) Suppose $\mathcal{F}_1 \to \mathcal{F}_2 \to \mathcal{F}_3$ is an exact sequence~\footnote{This means that $\mathcal{F}_1(K) \to \mathcal{F}_2(K) \to \mathcal{F}_3(K)$ is an exact sequence 
in $\Sets'$ for every field $K/k$.}.
Then $\rd_k(\mathcal{F}_2) \leqslant \max \, \{ \rd_k(\mathcal{F}_1), \rd_k(\mathcal{F}_3) \}$.

\smallskip
(b) If a morphism $\mathcal{F}_1 \to \mathcal{F}_2$ of functors has trivial kernel, then $\rd_k(\mathcal{F}_1) \leqslant \rd_k(\mathcal{F}_2)$.

\smallskip
(c) If a morphism $\mathcal{F}_2 \to \mathcal{F}_3$ of functors is surjective, then $\rd_k(\mathcal{F}_3) \leqslant \rd_k(\mathcal{F}_2)$.

\smallskip
(d) If $1 \to \mathcal{F}_1 \to \mathcal{F}_2 \to \mathcal{F}_3 \to 1$ is a short exact sequence, then
$\rd(\mathcal{F}_2) = \max \, \{ \rd_k(\mathcal{F}_1), \rd_k(\mathcal{F}_3) \}$. In particular, 
$\rd_k (\mathcal{F}_1 \times \mathcal{F}_3) = \max \, \{ \rd_k(\mathcal{F}_1), \rd_k(\mathcal{F}_3) \}$.
\end{lem}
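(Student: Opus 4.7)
The plan is to prove (a) first by the standard two-step splitting argument, then derive (b) and (c) from elementary chasing, and finally combine them to get (d).

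For part (a), fix $\alpha_2 \in \mathcal{F}_2(K)$ and denote its image in $\mathcal{F}_3(K)$ by $\alpha_3$. By condition~\eqref{e.splittable} applied inside $\mathcal{F}_3$, there exists a finite extension $L_3/K$ with $\lev_k(L_3/K) \leqslant \rd_k(\mathcal{F}_3)$ that splits $\alpha_3$. Then $(\alpha_2)_{L_3}$ maps to $1$ in $\mathcal{F}_3(L_3)$, so by exactness at $\mathcal{F}_2$ it lies in the image of $\mathcal{F}_1(L_3) \to \mathcal{F}_2(L_3)$, say $(\alpha_2)_{L_3}$ is the image of some $\alpha_1 \in \mathcal{F}_1(L_3)$. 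Pick a finite extension $L_2/L_3$ with $\lev_k(L_2/L_3) \leqslant \rd_k(\mathcal{F}_1)$ that splits $\alpha_1$; then $(\alpha_2)_{L_2}$ is the image of $(\alpha_1)_{L_2} = 1$, hence $(\alpha_2)_{L_2} = 1$ (morphisms in $\Sets'$ preserve the marked element). Applying Lemma~\ref{lem.lev4} to the tower $K \subset L_3 \subset L_2$ gives $\lev_k(L_2/K) \leqslant \max\{\rd_k(\mathcal{F}_1), \rd_k(\mathcal{F}_3)\}$, whence $\rd_k(\alpha_2)$ is bounded by this max. Taking the supremum over all $\alpha_2$ and $K$ yields (a).

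For part (b), given $\alpha_1 \in \mathcal{F}_1(K)$ with image $\alpha_2 \in \mathcal{F}_2(K)$, any finite extension $L/K$ splitting $\alpha_2$ satisfies $(\alpha_1)_L \mapsto (\alpha_2)_L = 1$, so trivial kernel forces $(\alpha_1)_L = 1$. Choosing $L$ with $\lev_k(L/K) \leqslant \rd_k(\mathcal{F}_2)$ gives $\rd_k(\alpha_1) \leqslant \rd_k(\mathcal{F}_2)$. For part (c), given $\alpha_3 \in \mathcal{F}_3(K)$, surjectivity (at $K$) produces a lift $\alpha_2 \in \mathcal{F}_2(K)$; any extension $L/K$ splitting $\alpha_2$ also splits $\alpha_3$, since $1 \mapsto 1$. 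Hence $\rd_k(\alpha_3) \leqslant \rd_k(\mathcal{F}_2)$.

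Part (d) is now formal: in a short exact sequence $1 \to \mathcal{F}_1 \to \mathcal{F}_2 \to \mathcal{F}_3 \to 1$, part (a) gives $\rd_k(\mathcal{F}_2) \leqslant \max\{\rd_k(\mathcal{F}_1), \rd_k(\mathcal{F}_3)\}$, while parts (b) and (c) give the two inequalities $\rd_k(\mathcal{F}_1) \leqslant \rd_k(\mathcal{F}_2)$ and $\rd_k(\mathcal{F}_3) \leqslant \rd_k(\mathcal{F}_2)$. Combined, these force equality. The product case $\mathcal{F}_1 \times \mathcal{F}_3$ follows by applying (d) to the tautological short exact sequence $1 \to \mathcal{F}_1 \to \mathcal{F}_1 \times \mathcal{F}_3 \to \mathcal{F}_3 \to 1$.

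I do not expect any serious obstacle. The only point requiring care is ensuring that morphisms in $\Sets'$ send the marked element to the marked element (so that splittings push forward along morphisms of functors), and that ``surjective'' in (c) is interpreted as surjective on every $\mathcal{F}_i(K)$; both are standard conventions for pointed-set-valued functors, consistent with the exactness used in (a).
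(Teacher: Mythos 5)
Your proposal is correct and follows essentially the same route as the paper: the same two-step splitting argument for (a) (split the image in $\mathcal{F}_3$, lift by exactness, then split the lift in $\mathcal{F}_1$, concluding with Lemma~\ref{lem.lev4}), and the same formal combination for (d). The only cosmetic difference is that you prove (b) and (c) by direct element chasing, whereas the paper obtains them by applying (a) to the degenerate exact sequences $1 \to \mathcal{F}_1 \to \mathcal{F}_2$ and $\mathcal{F}_2 \to \mathcal{F}_3 \to 1$; the underlying content is identical.
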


\begin{proof} (a) 
Suppose $\alpha \in \mathcal{F}_2(K)$ for some field $K/k$. Denote the image of $\alpha$ in $\mathcal{F}_3(K)$ by $\beta$. 
After passing to an extension $L/K$ of level $\leqslant \rd(\mathcal{F}_3)$, we may assume that $\beta$ is split.
Hence, $\alpha_L \in \mathcal{F}_2(L)$ is the image of some $\gamma \in \mathcal{F}_3(L)$. A further extension $L'/L$ of level $\leqslant \rd(\mathcal{F}_1)$ splits $\gamma$. 
Thus the composite extension $K \subset L \subset L'$
splits $\alpha$. We conclude that 
\[ \rd_k(\alpha) \leqslant \lev_k(L'/K) \leqslant \max \, \{ \lev_k(L/K) , \lev_k(L'/L) \} \leqslant \max \, \{  \rd_k(\mathcal{F}_1), \, \rd_k(\mathcal{F}_2)  \}, \]
where the inequality in the middle follows from Lemma~\ref{lem.lev4}.
Taking the maximum over all fields $K/k$ and all objects $\alpha \in \mathcal{F}_2(K)$, we conclude that
$\rd(\mathcal{F}_2) \leqslant \max \, \{ \rd_k(\mathcal{F}_1), \rd_k(\mathcal{F}_3) \}$.

\smallskip
(b) and (c): Apply part (a) to the exact sequences $1 \to \mathcal{F}_1 \to \mathcal{F}_2$ and $\mathcal{F}_2 \to \mathcal{F}_3 \to 1$,
respectively.

\smallskip
(d) For the first assertion combine the inequalities of (a), (b) and (c). The second assertion is a special case of the first with
$\mathcal{F}_3 = \mathcal{F}_1 \times \mathcal{F}_2$.
\end{proof}

\begin{prop} \label{prop.h2} Let $A$ be a diagonalizable group (i.e., a closed subgroup of the split torus $\bbG_m^d$)
defined over $k$. Then the functor $H^2(\ast, A)$ satisfies condition~\eqref{e.splittable} and
\[ \rd_k ( H^2(\ast, A) ) \leqslant 1. \] 
\end{prop}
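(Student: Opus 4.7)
The plan is to reduce everything to the Brauer group $H^2(\ast,\bbG_m)$ and then invoke a substantive splitting result for Brauer classes. Splittability~\eqref{e.splittable} is immediate from $H^2(K,A) = \varinjlim_L H^2(\Gal(L/K), A(L))$ as $L$ runs over the finite Galois extensions of $K$ that split $A$: every class is already defined by a $2$-cocycle for some finite Galois $L/K$ and is killed after restriction to $\Gal(K^{\mathrm{sep}}/L)$, so $L$ splits it.

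Because $A \subset \bbG_m^d$ is split diagonalizable, its character group is a finitely generated abelian group with trivial Galois action, so $X^*(A) \simeq \bbZ^r \oplus \bigoplus_{i=1}^s \bbZ/n_i$ and therefore $A \simeq \bbG_m^r \times \prod_{i=1}^s \mu_{n_i}$ over $k$. Since $H^2(K,-)$ commutes with finite products of commutative group schemes, Lemma~\ref{lem.functor3}(d) reduces the problem to the two cases $A = \bbG_m$ and $A = \mu_n$. For the second, I would use the Kummer sequence $1 \to \mu_n \to \bbG_m \xrightarrow{n} \bbG_m \to 1$ (exact as fppf sheaves in every characteristic) together with Hilbert~90 ($H^1(K,\bbG_m)=0$) to embed $H^2(K,\mu_n)$ into $\Br(K)$ as a morphism of functors with trivial kernel; Lemma~\ref{lem.functor3}(b) then yields $\rd_k(H^2(\ast,\mu_n)) \leqslant \rd_k(\Br)$.

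It therefore suffices to prove $\rd_k(\Br) \leqslant 1$, i.e., that every Brauer class is split by an extension of level $\leqslant 1$. The cleanest route is to cite the type-$A$ case of Tits' theorem from the Introduction: the $\PGL_n$-torsor corresponding to a central simple algebra representing $\alpha \in \Br(K)$ is split by a radical extension of $K$, and Lemma~\ref{lem.radical}(b) shows any radical extension has level $\leqslant 1$. An equivalent route that stays inside cohomology is to first pass to $K(\mu_n)$, where $n$ is the period of $\alpha$; this is cyclotomic, hence solvable, hence of level $\leqslant 1$ by Lemma~\ref{lem.radical}(a). Then Merkurjev--Suslin writes $\alpha_{K(\mu_n)}$ as a sum of symbol classes $(a_i,b_i)_n$, each split by the Kummer extension $K(\mu_n)(\sqrt[n]{a_i})$, and Lemma~\ref{lem.lev4} assembles these level $\leqslant 1$ pieces into a single level $\leqslant 1$ splitting field for $\alpha$. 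When $\Char k$ divides the period of $\alpha$, the $p$-primary part would be absorbed using Artin--Schreier--Witt theory together with Lemma~\ref{lem.radical}(c), since purely inseparable extensions are automatically of level $\leqslant 1$.

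The main obstacle is this last step. The inequality $\rd_k(\Br) \leqslant 1$ is not elementary, and whichever route one follows it rests on a substantive input (Tits' theorem for type $A$, or Merkurjev--Suslin with an Artin--Schreier--Witt supplement). Once this is granted, the rest of the argument is a formal bookkeeping reduction using only Lemma~\ref{lem.functor3}, the Kummer sequence, and Lemma~\ref{lem.radical}.
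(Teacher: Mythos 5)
Your proposal is correct, and your ``second route'' is essentially the paper's own proof: decompose $A \simeq \bbG_m^r \times \prod_i \mu_{n_i}$, reduce to the factors via Lemma~\ref{lem.functor3}(d), identify $H^2(\ast,\mu_n)$ with ${}_n\Br$ inside $H^2(\ast,\bbG_m)=\Br$, and split Brauer classes by solvable extensions using Merkurjev--Suslin, with Lemma~\ref{lem.radical} converting solvable into level $\leqslant 1$. The one substantive difference in that route is the characteristic-$p$ supplement: the paper first applies the primary decomposition to reduce to prime-power index and then invokes Albert's theorem that a $p$-algebra in characteristic $p$ is Brauer-equivalent to a cyclic algebra, hence split by a cyclic (solvable) extension; this is cleaner and more precise than the Artin--Schreier--Witt sketch you offer, which as written is not yet a proof. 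Your preferred ``first route'' via Tits is genuinely different from the paper and needs a caveat: it is legitimate only as a citation of Tits' original theorem in \cite{tits-resume}, since within this paper the statement that adjoint type-$A$ torsors are split by radical extensions is derived in Section~\ref{sect.proof-main3} using Proposition~\ref{prop.group2}(d), which itself rests on Proposition~\ref{prop.h2} --- so deducing it from the paper's later results would be circular. Moreover, the paper explicitly sidesteps the delicate prime-characteristic points in Tits' argument, whereas your Proposition must hold over an arbitrary field $k$; this is presumably why the author chose the Merkurjev--Suslin/Albert route, which buys a characteristic-free and self-contained argument at the cost of quoting two standard theorems.
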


\begin{proof} First let us consider the special case, where $A = \bbG_m$. Recall that $H^2(K, \bbG_m)$ is in a natural (functorial) bijection with the Brauer group $\Br(K)$. Thus it suffices to show that every central simple algebra $A$ over every $F \in \Fields_k$ can be split by 
a solvable extension of $K$. By the Primary decomposition Theorem we may assume without loss of generality that the index of $A$ is a prime power, $p^r$.
If $\Char(k) \neq p$, then the Merkurjev-Suslin Theorem tells us that $A$ can be split by a solvable extension of $K$; see \cite[Corollary 2.5.9]{gille-szamuely}. If $p = \Char(k)$, then by a theorem of Albert, $A$
is Brauer-equivalent to a cyclic algebra and thus can be split by a cyclic (and hence, once again, solvable) field extension of $K$.
This completes the proof in the case where $A = \bbG_m$. 

If $A = \mu_n$, then $H^2(\ast, \mu_n) \simeq {\, }_n \Br(K)$, where ${\, }_n \Br(K)$ is the $n$-torsion subgroup of $\Br(K)$, and the same argument applies.

In general we write $A$ as a direct product
$A_1 \times_k \ldots \times_k A_r$, where each $A_i$ is $k$-isomorphic to $\bbG_m$ or $\mu_{n}$ for some integer $n$. Then
$H^2(\ast, A) = H^2(\ast, A_1) \times \ldots \times H^2(\ast, A_r)$, and the desired conclusion follows from
Lemma~\ref{lem.functor3}(d).
\end{proof}

\begin{remark} \label{rem.h2}
If $A \neq 1$ in Proposition~\ref{prop.h2}, then equality holds: $\rd_k(H^2(\ast, A)) = 1$.

To prove this, we readily reduce to the case, where $A = \mu_n$ for some $n \geqslant 2$. In this case,
assume the contrary. Then for every $K \in \Fields_k$, every $\alpha \in H^2(K, \mu_n)$ can be split by a finite extension $L/K$
of level $0$. In particular, by Remark~\ref{rem.0-closed}, if $K$ contains $\overline{k}$, then $K$ is closed at level $0$, i.e., 
there are no non-trivial finite extensions $L/K$ of level $0$ and thus $H^2(K, \mu_n) = 1$. 
On the other hand, it is well known that if $K = \overline{k}(x, y)$, where $x$ and $y$ are variables, 
the symbol algebra $(x, y)_n$ represents a non-trivial class in $H^2(K, \mu_n)$, a contradiction.
\qed
\end{remark}

\begin{remark} \label{rem.hn}
Using the Norm Residue Isomorphism Theorem (formerly known as the Bloch-Kato Conjecture)  
in place of the Merkurjev-Suslin Theorem, one shows in the same manner that $\rd_k(H^d(*, A)) \leqslant 1$ for every $d \geqslant 1$
and that equality holds if $A \neq 1$.
\end{remark}

\section{Functors preserving direct limits}
\label{sect.base-change-alg}

In this section we will assume that our functor $\mathcal{F} \colon \Fields_k \to \Sets'$ respects direct limits.
Examples include Galois cohomology functors $H^1(\ast, G)$, where $G$ is an algebraic group over $k$, as well as $H^d(\ast, G)$ 
for every $d \geqslant 2$, if $G$ is abelian. For such functors $\mathcal{F}$ the study of resolvent degree can 
be facilitated by using the notion of level $d$ closure of of field introduced in Section~\ref{sect.level-d}. 

\begin{prop} \label{prop.level-d}
Assume that a functor $\mathcal{F} \colon \Fields_k \to \Sets'$ satisfies condition~\eqref{e.splittable} and respects 
direct limits. Let $K \in \Fields_k$ and $\alpha \in \mathcal{F}(K)$. Then

\smallskip
(a) $\rd_k(\alpha) \leqslant d$ if and only if $\alpha$ splits over $K^{(d)}$, i.e., $\alpha_{K^{(d)}} = 1$.

\smallskip
(b) $\rd_k(\mathcal{F}) \leqslant d$ if and only if $\mathcal{F}(K) = 1$ for every field $K$ closed at level $d$.

\smallskip
(c) Suppose $\rd_k(\alpha_{K^{(d)}}) \leqslant m$. Then $\rd_k(\alpha) \leqslant \max \{ d, \, m \}$. 

\smallskip
(d) Suppose $\rd_k(\beta) \leqslant m$ for every field $E \in \Fields_k$ closed at level $d$ and every $\beta \in \mathcal{F}(E)$. 
Then $\rd_k(\mathcal{F}) \leqslant \max \{ d, \, m \}$. 
\end{prop}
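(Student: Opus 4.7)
The plan is to establish part (a) first and then derive parts (b), (c), (d) as essentially formal consequences, using the closure properties of $K^{(d)}$ recorded in Proposition~\ref{prop.level-d.1} (especially part (d), which says $K^{(d)}$ is closed at level $d$).

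For part (a), the forward direction is immediate from the definitions: if $\rd_k(\alpha) \leqslant d$, then some finite extension $L/K$ with $\lev_k(L/K) \leqslant d$ splits $\alpha$, and $L \subseteq K^{(d)}$ by the definition of the level-$d$ closure, so $\alpha_{K^{(d)}} = 1$. For the converse, I would observe that $K^{(d)}$ is the directed union of its finite intermediate extensions $K \subseteq L \subseteq K^{(d)}$ satisfying $\lev_k(L/K) \leqslant d$. Directedness follows from the fact that a composite of two such extensions sits inside $K^{(d)}$ and is itself of level $\leqslant d$ over $K$, via Proposition~\ref{prop.level-d.1}(a) combined with Lemmas~\ref{lem.lev1} and~\ref{lem.lev4}. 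Because $\mathcal{F}$ respects direct limits, $\mathcal{F}(K^{(d)}) = \varinjlim_L \mathcal{F}(L)$ over this directed system, so $\alpha_{K^{(d)}} = 1$ implies $\alpha_L = 1$ for some such $L$, whence $\rd_k(\alpha) \leqslant \lev_k(L/K) \leqslant d$.

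For part (b), apply (a) in both directions. If $\mathcal{F}(E) = 1$ for every $E$ closed at level $d$, then in particular $\mathcal{F}(K^{(d)}) = 1$ for every $K \in \Fields_k$, so every $\alpha \in \mathcal{F}(K)$ has $\rd_k(\alpha) \leqslant d$ by (a). Conversely, if $\rd_k(\mathcal{F}) \leqslant d$ and $E$ is closed at level $d$, then every $\beta \in \mathcal{F}(E)$ satisfies $\rd_k(\beta) \leqslant d$, so by (a) $\beta$ splits over $E^{(d)} = E$, meaning $\beta = 1$. For part (c), apply (a) to $\alpha_{K^{(d)}} \in \mathcal{F}(K^{(d)})$ at level $m$: it splits over $(K^{(d)})^{(m)}$. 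A short case analysis using Proposition~\ref{prop.level-d.1}(b),(d) gives $(K^{(d)})^{(m)} = K^{(\max\{d,m\})}$ (directly from (d) when $m \geqslant d$; and from $(K^{(d)})^{(m)} \subseteq (K^{(d)})^{(d)} = K^{(d)}$ together with $K^{(d)} \subseteq (K^{(d)})^{(m)}$ when $m \leqslant d$), so applying (a) once more yields $\rd_k(\alpha) \leqslant \max\{d,m\}$. Part (d) is then immediate: for any $K$ and $\alpha \in \mathcal{F}(K)$, the field $K^{(d)}$ is closed at level $d$, so $\rd_k(\alpha_{K^{(d)}}) \leqslant m$ by hypothesis, and (c) delivers $\rd_k(\alpha) \leqslant \max\{d,m\}$; taking the supremum over $K$ and $\alpha$ gives the conclusion.

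The only step requiring genuine (albeit short) work is the converse in (a): verifying that the finite extensions $L/K$ with $\lev_k(L/K) \leqslant d$ that lie inside $K^{(d)}$ form a \emph{directed} system, so that the direct-limit hypothesis on $\mathcal{F}$ can be applied. Every remaining assertion is a clean formal manipulation of the definition of $K^{(d)}$ and the idempotence of level-$d$ closure from Proposition~\ref{prop.level-d.1}(d).
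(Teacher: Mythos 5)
Your proposal is correct and follows essentially the same route as the paper: part (a) via the direct-limit hypothesis together with Proposition~\ref{prop.level-d.1}(a), and parts (b)--(d) as formal consequences using the idempotence of level-$d$ closure from Proposition~\ref{prop.level-d.1}(d). The only cosmetic difference is that in the converse of (a) you verify directedness of the system of level-$\leqslant d$ subextensions explicitly, whereas the paper simply takes an arbitrary finite subextension of $K^{(d)}$ and bounds its level afterwards; both are fine.
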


\begin{proof} (a) 
Suppose $\rd_k(\alpha) \leqslant d$. Then $\alpha$ splits over a finite extension $L$ of $K$ such that $\lev_k(L/K) \leqslant d$.
By definition of $K^{(d)}$, $L$ embeds into $K^{(d)}$ over $K$. Hence, $\alpha_{K^{(d)}} = 1$.
Conversely, suppose $\alpha$ splits over $K^{(d)}$. Since $\mathcal{F}$ respects direct limits, $\alpha$ splits over some subextension
$K \subset L \subset K^{(d)}$ such that $[L:K] < \infty$. By Proposition~\ref{prop.level-d.1}(a), 
$\lev_k(L/K) \leqslant d$. Thus $\rd_k(\alpha) \leqslant d$.

\smallskip
(b) Suppose $\rd_k(\mathcal{F}) \leqslant d$ and $K \in \Fields_k$ is closed at level $d$. By part (a), any $\alpha \in \mathcal{F}(K)$
splits over $K^{(d)}$. By Proposition~\ref{prop.level-d.1}(d), $K^{(d)} = K$ and thus $\alpha = 1$. This shows that $\mathcal{F}(K) = 1$.

Conversely, assume $\mathcal{F}(K) = 1$ whenever $K \in \Fields_k$ is closed at level $d$. Let $F$ be an arbitrary field containing
$k$ and $\alpha \in \mathcal{F}(F)$. By our assumption (with $K = F^{(d)}$), $\alpha_{F^{(d)}} = 1$. Since
$\mathcal{F}$ respects direct limits, $\alpha_E = 1$ for some $F \subset E \subset F^{(d)}$,
where $E$ is finitely generated over $F$, i.e., $[E : F] < \infty$. By Proposition~\ref{prop.level-d.1}(a), 
$\lev_k(E/F) \leqslant d$. Hence, $\rd_k(\alpha) \leqslant d$. 

(c) Let $n = \max \{ d, \, m \}$.  In view of part (a), our goal is to show that $\alpha_{K^{(n)}} = 1$. 
Set $E = K^{(d)}$. By Proposition~\ref{prop.level-d.1}(d), $E$ is closed at level $d$ and $E^{(n)} = K^{(n)}$. By our assumption,
$\rd_k(\alpha_{E}) \leqslant m$. By part (a), $\alpha_{E^{(m)}} = 1$. Since $E^{(m)} \subset E^{(n)}$,
we conclude that $\alpha_{K^{(n)}} = \alpha_{E^{(n)}} = 1$. 

(d) is an immediate consequence of (c).
\end{proof}

\begin{defn} \label{def.restriction} Let $\mathcal{F} \colon \Fields_k \to \Sets'$ be a functor. For any field $k'$ containing $k$,
we define $\mathcal{F}_{k'} \colon \Fields_{k'} \to \Sets'$ to be a restriction of $\mathcal{F}$ to $\Fields_{k'}$. In other words, $\mathcal{F}_{k'}(K)$ is only defined if $K$ contains $k'$, and for such $K$, $\mathcal{F}_{k'}(K) = \mathcal{F}(K)$.
\end{defn}

\begin{prop} \label{prop.functor2} Assume that a functor $\mathcal{F} \colon \Fields_k \to \Sets'$ satisfies condition~\eqref{e.splittable}.

\smallskip
(a) If $k'/k$ is a field extension, then the functor $\mathcal{F}_{k'}$ also satisfies condition~\eqref{e.splittable} 
and $\rd_k(\mathcal{F}) \geqslant \rd_{k'}(\mathcal{F}_{k'})$.

\smallskip
(b) Moreover, if $k'/k$ is an algebraic field extension and $\mathcal{F}$ respects direct limits, then $\rd_k(\mathcal{F}) = \rd_{k'}(\mathcal{F}_{k'})$.
\end{prop}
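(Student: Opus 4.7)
For part (a), observe that $\mathcal{F}_{k'}$ inherits condition~\eqref{e.splittable} trivially from $\mathcal{F}$: for $K \in \Fields_{k'}$ and $\alpha \in \mathcal{F}_{k'}(K) = \mathcal{F}(K)$, any finite splitting extension of $\alpha$ with respect to $\mathcal{F}$ splits $\alpha$ with respect to $\mathcal{F}_{k'}$ as well. For the inequality, take any $K \in \Fields_{k'}$ and $\alpha \in \mathcal{F}_{k'}(K)$. Viewing $\alpha$ as an object of $\mathcal{F}(K)$ and applying Lemma~\ref{lem.functor-2}(a), we get $\rd_{k'}(\alpha) \leqslant \rd_k(\alpha) \leqslant \rd_k(\mathcal{F})$. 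Taking the supremum over all such $K$ and $\alpha$ yields $\rd_{k'}(\mathcal{F}_{k'}) \leqslant \rd_k(\mathcal{F})$.

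For part (b), assume $k'/k$ is algebraic and $\mathcal{F}$ respects direct limits. Set $d = \rd_{k'}(\mathcal{F}_{k'})$; if $d = \infty$ the inequality $\rd_k(\mathcal{F}) \leqslant d$ is vacuous, so assume $d < \infty$. The plan is to combine Proposition~\ref{prop.level-d} (characterizing $\rd$ via level-$d$ closures when $\mathcal{F}$ respects direct limits) with the key observation that level closures are insensitive to an algebraic base change, by Lemma~\ref{lem.base-change}(b).

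Fix $K \in \Fields_k$ and $\alpha \in \mathcal{F}(K)$. Let $E = K^{(d)}$ denote the level-$d$ closure of $K$ computed with respect to $k$. Since $k'$ is algebraic over $k$, we have $k' \subset \overline{k} \subset \overline{k} K = K^{(0)} \subset K^{(d)} = E$, so $E \in \Fields_{k'}$. By Proposition~\ref{prop.level-d.1}(d), $E$ is closed at level $d$ with respect to $k$; but by Lemma~\ref{lem.base-change}(b), for any finite extension $L/E$ we have $\lev_k(L/E) = \lev_{k'}(L/E)$, so $E$ is equally closed at level $d$ with respect to $k'$. Applying Proposition~\ref{prop.level-d}(b) to the functor $\mathcal{F}_{k'}$, which satisfies condition~\eqref{e.splittable} by part (a) and clearly still respects direct limits, we deduce that $\mathcal{F}_{k'}(E) = \{1\}$, hence $\alpha_E = 1$. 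A final application of Proposition~\ref{prop.level-d}(a) to $\mathcal{F}$ then gives $\rd_k(\alpha) \leqslant d$. Since $K$ and $\alpha$ were arbitrary, $\rd_k(\mathcal{F}) \leqslant d = \rd_{k'}(\mathcal{F}_{k'})$, which together with part (a) gives the desired equality.

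The only subtle point is ensuring that a field closed at level $d$ over $k$ is still closed at level $d$ over $k'$; this is precisely where the algebraicity hypothesis on $k'/k$ enters, via Lemma~\ref{lem.base-change}(b). Without algebraicity, the two closures could differ and the argument would break down, which is consistent with the fact that Theorem~\ref{thm.main1}—the version for arbitrary $k'/k$—requires substantially more work and is deferred to a later section.
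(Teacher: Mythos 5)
Your proposal is correct and follows essentially the same route as the paper: both arguments rest on the level-$d$ closure formalism of Propositions~\ref{prop.level-d} and~\ref{prop.level-d.1} together with the invariance of level under algebraic base change (Lemma~\ref{lem.base-change}(b)), the only difference being that the paper phrases the key step as the field identity $K^{(d)} = (k'K)^{(d)}$ and deduces the pointwise equality $\rd_k(\alpha) = \rd_{k'}(\alpha_{k'K})$, whereas you show directly that $K^{(d)}$ is closed at level $d$ over $k'$ and invoke Proposition~\ref{prop.level-d}(b). This reorganization is sound and, if anything, slightly cleaner in part (a).
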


\begin{proof} Let $K \in \Fields_k$ and $\alpha \in \mathcal{F}(K)$. 

(a) The first assertion is obvious from Definition~\ref{def.restriction}. 
To prove the second assertion, it suffices to show that 
\begin{equation} \label{e.functor2}
\rd_k(\alpha) \geqslant \rd_k(\alpha_{k'K}) \geqslant \rd_{k'}(\alpha_{k'K}), 
\end{equation}
where $k'K$ is a compositum of $k'$ and $K$. Indeed, the maximal value of the left hand side over all $K \in \Fields_k$ and all $\alpha \in \mathcal{F}(K)$ is $\rd_k(\mathcal{F})$, where as the maximal value of the right hand side is $\rd_{k'}(\mathcal{F}_{k'})$.
The first inequality in~\eqref{e.functor2} follows from Lemma~\ref{lem.functor-3}(a) and the second 
from Lemma~\ref{lem.functor-2}(a). 

\smallskip
(b) Here it suffices to show that 
\begin{equation} \label{e.functor4b}
\rd_k(\alpha) = \rd_k(\alpha_{k'K}) = \rd_{k'}(\alpha_{k'K}). \end{equation}
The second equality follows from Lemma~\ref{lem.functor-2}(b). To prove the first inequality, it suffices to show that
\begin{equation} \label{e.functor4c}
K^{(d)} = (k'K)^{(d)}
\end{equation}
for every $d \geqslant 0$. Indeed, if we can prove this, then $\alpha$ is split by $K^{(d)}$ if and 
only if $\alpha_{k'K}$ is split by $(k'K)^{(d)}$, and the desired equality follows from Proposition~\ref{prop.level-d}(a).

To prove~\eqref{e.functor4c}, note that by Remark~\ref{rem.0-closed}, $K \subset k' K \subset K^{(0)}$. By Proposition~\ref{prop.level-d.1},
\[ K^{(d)} \subset (k' K)^{(d)} \subset (K^{(0)})^{(d)} = K^{(d)} , \]
and~\eqref{e.functor4c} follows.
\end{proof}

\begin{example} \label{ex.etale2} Let $\text{\'Et}_n \colon \Fields_k \to \Sets'$ be the functor of $n$-dimensional \'etale algebras 
introduced in Example~\ref{ex.etale}. If $d \geqslant \rd_k(\text{\'Et}_n)$, and $K \in \Fields_k$ is closed at level $d$,
then every polynomial of degree $\leqslant n$ splits into a product of linear factors over $K$.
\end{example}

\begin{proof} If $n = 1$, the assertion is vacuous, so we may assume that $n \geqslant 2$. One readily checks that 
the functor $(\text{\'Et}_n)_{\overline k}$ is non-trivial for any $n \geqslant 2$. Hence, 
\[ \rd_k(\text{\'Et}_n) \geqslant \rd_{\overline{k}} \big( (\text{\'Et}_n)_{\overline{k}} \big) \geqslant 1 , \]
where $\overline{k}$ denotes an algebraic closure of $k$, the first inequality follows from Proposition~\ref{prop.functor2}(a),
the second from Lemma~\ref{lem.functor-1}. Thus $d \geqslant 1$. By Corollary~\ref{cor.level-1}, $K$ is perfect. 

It remains to show that there does not exist an irreducible polynomial $f(x) \in K[x]$ of degree $m$ for any $2 \leqslant m \leqslant n$.
Indeed, assume the contrary. Then \[ E = K[x]/(f(x)) \times \underbrace{K \times \ldots \times K}_{\text{$n-m$ times}} \]
is a non-split \'etale algebra of degree $n$. On the other hand, $\text{\'Et}_n(K) = 1$ by Proposition~\ref{prop.level-d}(b), i.e., 
every \'etale algebra of degree $n$ over $K$ is split, a contradiction. 
\end{proof}

\begin{remark} \label{rem.truncated} 
Proposition~\ref{prop.functor2}(b) may fail if 

\smallskip
(a) the functor $\mathcal{F}$ is not required to respect direct limits or if

\smallskip
(b) the field $k'$ is not required to be algebraic over $k$.
\end{remark}

\begin{proof} Our counterexamples in parts (a) and (b) will both rely on the following construction. 
Let $\mathcal{F} \colon \Fields_k \to \Sets'$ be a functor and $\Lambda$ be a collection of fields $K \subset \Fields_k$
closed under inclusion. That is, if $L \in \Lambda$ and $K \subset L$, then $K \in \Lambda$. Set
$\mathcal{F}^{\Lambda} \colon \Fields_k \to \Sets'$ by
\[ \mathcal{F}^{\Lambda}(K) = \begin{cases} 
\text{$\mathcal{F}(K)$, if $K \in \Lambda$, and} \\          
\text{$\{ 1 \}$, if $K \not \in \Lambda$.}
\end{cases} \]
If $K \subset L$ be a field extension, the natural map $\mathcal{F}^{\Lambda}(K) \to \mathcal{F}^{\Lambda}(L)$ is defined to be 
the same as the natural map $\mathcal{F}(K) \to \mathcal{F}(L)$ if $L \in \Lambda$ and to be the trivial map (sending every element of $\mathcal{F}(K)$ to $1$)
if $L \not \in \Lambda$. It is easy to see that $\mathcal{F}^{\Lambda}$ is well defined. Moreover, if $\mathcal{F}$ satisfies condition~\eqref{e.splittable}, 
then so does $\mathcal{F}^{\Lambda}$. Informally, we think of $\mathcal{F}^{\Lambda}$ as a truncation of $\mathcal{F}$.

The starting point for both parts is a functor $\mathcal{F} \colon \Fields_k \to \Sets_k'$ which satisfies~\eqref{e.splittable},
respects direct limits, and such that 
$\rd_k(\mathcal{F}) \geqslant 1$. There are many examples of such functors, e.g., $\mathcal{F} =  H^2(\ast, \bbG_m)$; see Remark~\ref{rem.h2}.
Choose  a field $K \in \Fields_k$ and an object $\alpha \in \mathcal{F}(K)$ such that $\rd_k(\alpha) \geqslant 1$.
Since $\mathcal{F}$ respects direct limits, $\alpha$ descends to $\alpha_0 \in \mathcal{F}(K_0)$ for some
intermediate field $k \subset K_0 \subset K$ such that $K_0$ is finitely generated over $k$. By Lemma~\ref{lem.functor-3}(a), 
$\rd_k(\alpha_0) \geqslant \rd_k(\alpha) \geqslant 1$. After replacing $K$ by $K_0$ and $\alpha$ by $\alpha_0$,
we may assume that $K$ is finitely generated over $k$. 

(a) Consider the truncated functor $\mathcal{F}^{\Lambda}$, where 
\[  \Lambda = \{ K/k \; \big| \; \text{$K$ is finitely generated over $k$} \}. \]
Note that $\rd_k(\alpha) \geqslant 1$ whether we view $\alpha$ as an object 
in $\mathcal{F}$ or $\mathcal{F}^{\Lambda}$. On the other hand, if the algebraic closure $\overline{k}$ is not finitely generated 
over $k$ (e.g., if $k = \mathbb Q$), then no field containing $\overline{k}$ can be finitely generated over $k$. This tells us that 
the truncated functor $\mathcal{F}^{\Lambda}_{\overline{k}}$ is the trivial functor and consequently,
$\rd_{\overline{k}} \, (\mathcal{F}^{\Lambda}_{\overline{k}}) = 0$. We conclude that 
Proposition~\ref{prop.functor2}(b) fails for $\mathcal{F}^{\Lambda}$ if $k' = \overline{k}$.

(b) Set $m = \trdeg_k(K)$ and consider the truncated functor $\mathcal{F}^{\Lambda}$, where 
\[  \Lambda = \{ K/k \; \big| \;  \trdeg_k(K) \leqslant m \}. \]
The functor $\mathcal{F}^{\Lambda}$ continues to satisfy condition~\eqref{e.splittable} and to respect direct limits. 
If $\trdeg_k(k') > m$, then $\mathcal{F}^{\Lambda}_{k'}$ is trivial and 
thus $\rd_{k'}(\mathcal{F}^{\Lambda}) = 0$. On the other hand, $\rd_k(\alpha) \geqslant 1$ whether we view $\alpha$ as an object in $\mathcal{F}$ or 
$\mathcal{F}^{\Lambda}$. In summary, $\rd_k(\mathcal{F}^{\Lambda}) \geqslant 1$, $\rd_{k'}(\mathcal{F}^{\Lambda}_{k'}) = 0$, and  
Proposition~\ref{prop.functor2}(b) fails for $\mathcal{F}^{\Lambda}$.
\end{proof}

\section{Change of base field}
\label{sect.base-change-arb}

As we saw in Remark~\ref{rem.truncated}(b), Proposition~\ref{prop.functor2}(b) fails if $k'$ is not assumed to be algebraic over $k$. 
In this section we will show that under an additional condition on the functor $\mathcal{F}$, the equality of 
Proposition~\ref{prop.functor2}(b) can be (largely) salvaged for an arbitrary field 
extension $k'/k$. The condition we will impose on $\mathcal{F}$ is as follows:
\begin{align}  \label{e.condition*} 
\text{the natural map $\mathcal{F}(E) \to \mathcal{F}\big( E((t)) \big)$ has trivial} \\
\nonumber  
\text{kernel for every perfect field $E$ containing $k$.} 
\end{align}
Note that this is only slightly weaker than Condition (*) considered by Merkurjev in~\cite[Section 3]{merkurjev-gms}. The only difference between the two 
is that in~\cite{merkurjev-gms}, $E$ is not required to be perfect. As is pointed out in~\cite{merkurjev-gms}, this is a natural condition,
which is often satisfied.

\begin{prop} \label{prop.functor5}
Assume that a functor $\mathcal{F} \colon \Fields_k \to \Sets'$ satisfies conditions~\eqref{e.splittable} and~\eqref{e.condition*}
and respects direct limits. Then 
\[ \rd_{k'}(\mathcal{F}_{k'}) \leqslant \rd_{k}(\mathcal{F}) 
 \leqslant \max \{ \rd_{k'}(\mathcal{F}_{k'}), \,  1 \}  \]
for any field extension $k'/k$.
\end{prop}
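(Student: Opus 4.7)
The first inequality $\rd_{k'}(\mathcal{F}_{k'}) \leqslant \rd_k(\mathcal{F})$ is Proposition~\ref{prop.functor2}(a). For the second inequality, set $d_0 = \rd_{k'}(\mathcal{F}_{k'})$ and $d = \max\{d_0, 1\}$. By Proposition~\ref{prop.level-d}(b), it suffices to show $\mathcal{F}(K) = \{1\}$ whenever $K \in \Fields_k$ is closed at level $d$ over $k$; any such $K$ is perfect by Corollary~\ref{cor.level-1}. The plan is to reduce to the case of a simple purely transcendental extension $k' = k(t)$ and then to exploit the completion $K((t))$ together with Proposition~\ref{prop.lev-valuation}, Cohen's structure theorem, and condition~\eqref{e.condition*}.

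\textbf{Reduction to $k' = k(t)$.} Given $\beta \in \mathcal{F}(K)$, since $\mathcal{F}$ respects direct limits, $\beta$ descends from some $\beta_0 \in \mathcal{F}(K_0)$ with $K_0/k$ finitely generated. Applying Lemma~\ref{lem.functor-2}(c) to $(\beta_0)_{K_0 \cdot k'} \in \mathcal{F}_{k'}(K_0 \cdot k')$ produces a finitely generated intermediate field $l_0 \subset k'$ with $\rd_{l_0}((\beta_0)_{K_0 \cdot k'}) = \rd_{k'}((\beta_0)_{K_0 \cdot k'}) \leqslant d_0$, so we may assume $k'/k$ is itself finitely generated. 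Writing $k'$ as a tower of simple extensions, handling the algebraic steps via Proposition~\ref{prop.functor2}(b), and iterating the simple transcendental steps, we reduce to the case $k' = k(t)$.

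\textbf{Main argument for $k' = k(t)$.} Consider $\beta_{K((t))}$, which lies in $\mathcal{F}_{k'}(K((t)))$ since $K((t)) \supset k(t)$. By hypothesis $\rd_{k'}(\beta_{K((t))}) \leqslant d_0$, so some finite extension $M/K((t))$ with $\lev_{k'}(M/K((t))) \leqslant d_0$ splits $\beta_{K((t))}$. Apply Proposition~\ref{prop.lev-valuation} to the $t$-adic valuation $\nu$: the restriction of $\nu$ to $k(t)$ has residue field $k$, and $K((t))$ has perfect residue field $K$, so
\[ \lev_k(M_\nu/K) \leqslant \max\{\lev_{k'}(M/K((t))), 1\} \leqslant d. \]
Since $K$ is closed at level $d$ over $k$, Proposition~\ref{prop.level-d.1}(a) forces $M_\nu = K$; that is, $M/K((t))$ is totally ramified.

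\textbf{Conclusion.} The field $M$ is thus a complete equicharacteristic discrete valuation field with perfect residue field $K$. The embedding $K \hookrightarrow K((t)) \hookrightarrow M$ lies in the valuation ring $M^+$ and reduces to the identity on $K$, so it is a coefficient field in the sense of Cohen's structure theorem; hence $M \cong K((\pi))$ as $K$-algebras for any uniformizer $\pi$. Since $M$ splits $\beta$, we have $\beta_M = 1$, and condition~\eqref{e.condition*} applied with the perfect field $E = K$ yields $\beta = 1$, as required. The main obstacle is the bookkeeping in the reduction step; the extra ``$1$'' in the max $\max\{d_0, 1\}$ originates from the corresponding ``$1$'' in Proposition~\ref{prop.lev-valuation}, reflecting the possibility of a ramified extension when comparing generic and special fibers.
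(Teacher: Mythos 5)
Your overall strategy is the same as the paper's: reduce to a finitely generated, then simple purely transcendental extension $k' = k(t)$, pass to the completion $K((t))$, transfer the splitting field down to the residue field via Proposition~\ref{prop.lev-valuation}, and finish with the Cohen structure theorem and condition~\eqref{e.condition*}. Your one-variable argument is correct, and arguably a bit cleaner than the paper's: by first using closedness of $K$ at level $d$ to force $M_{\nu} = K$, you make the coefficient-field issue in Cohen's theorem transparent (the paper instead applies Cohen to a splitting field $L$ with possibly larger residue field $L_\nu$ and then bounds $\lev_k(L_\nu/K)$ afterwards); the two variants are equivalent given Proposition~\ref{prop.level-d}.

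There is, however, a real gap in your reduction to finitely generated $k'$. Lemma~\ref{lem.functor-2}(c) applied to $(\beta_0)_{K_0k'}$ gives a finitely generated $l_0 \subset k'$ with $\rd_{l_0}\big((\beta_0)_{K_0k'}\big) = \rd_{k'}\big((\beta_0)_{K_0k'}\big)$ — but the object here still lives over $K_0k'$, a field containing \emph{all} of $k'$. What the rest of your argument needs is a bound on $\rd_{l_0}(\beta_{l_0K})$ (the object over a field containing only $l_0$), and in general $\rd_{l_0}(\beta_{l_0K}) \geqslant \rd_{l_0}(\beta_{k'K})$ with no a priori reverse inequality; note also that the functor-level quantity $\rd_{l_0}(\mathcal{F}_{l_0})$ only satisfies $\rd_{l_0}(\mathcal{F}_{l_0}) \geqslant \rd_{k'}(\mathcal{F}_{k'})$, so one cannot "replace $k'$ by $l_0$" at the level of functors either. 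Closing this gap requires a second use of the direct-limit hypothesis (this is exactly the paper's Lemma~\ref{lem.claim}): since $\beta$ splits over $(k'K)^{(d_0)}$ and $\mathcal{F}$ respects direct limits, $\beta$ already splits over $L^{(d_0)}$ for some finitely generated subextension $K \subset L \subset k'K$; any such $L$ is contained in $lK$ for a suitable finitely generated $k \subset l \subset k'$, and Proposition~\ref{prop.level-d}(a) then gives $\rd_l(\beta_{lK}) \leqslant d_0$. With that lemma in place, your iteration over a transcendence basis (together with Proposition~\ref{prop.functor2}(b) for the residual algebraic extension) and your main argument go through.
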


The remainder of this section will be devoted to proving Proposition~\ref{prop.functor5}.
We begin with the following lemma.

\begin{lem} \label{lem.claim}
Assume that a functor $\mathcal{F} \colon \Fields_k \to \Sets'$ satisfies condition~\eqref{e.splittable} and
respects direct limits. Let $k'/k$ be a field extension, $K \in \Fields_k$ and
$\alpha \in \mathcal{F}(K)$. 
Then  there exists an intermediate field $k \subset l \subset k'$ such that
$l$ is finitely generated over $k$ and $\rd_l(\alpha_{lK}) = \rd_{k'}(\alpha_{k' K})$.
Here $k' K$ is some compositum of $k'$ and $K$ over $k$. The compositum $lK$ is taken in $k' K$.
\end{lem}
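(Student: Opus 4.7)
The plan is to descend a tower witnessing $\lev_{k'}(M/k'K) \leqslant d$ down to $lK$ for some finitely generated $l \subset k'$, in such a way that the splitting descends along with the combinatorial data. Set $d = \rd_{k'}(\alpha_{k'K})$. For every intermediate field $k \subset l \subset k'$, Lemma~\ref{lem.functor-3}(a) and Lemma~\ref{lem.functor-2}(a) give
\[ \rd_l(\alpha_{lK}) \geqslant \rd_l(\alpha_{k'K}) \geqslant \rd_{k'}(\alpha_{k'K}) = d, \]
so only the reverse inequality $\rd_l(\alpha_{lK}) \leqslant d$ needs to be produced for some $l$ finitely generated over $k$.

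First, fix a finite extension $M/k'K$ with $\alpha_M = 1$ and $\lev_{k'}(M/k'K) \leqslant d$, and by Remark~\ref{rem.def-level}(4) embed it in a tower
\[ k'K = K_0 \subset K_1 \subset \cdots \subset K_m, \qquad K_i = K_{i-1}(\gamma_i), \qquad M \subset K_m, \]
where $\gamma_i$ has minimal polynomial $p_i(x) \in K_{i-1}[x]$ and $\ed_{k'}(K_i/K_{i-1}) \leqslant d$. By Lemmas~\ref{lem.algebras1} and~\ref{lem.algebras2}, each step descends to $F_{i-1}(\gamma_i)/F_{i-1}$ with $k' \subset F_{i-1} \subset K_{i-1}$ finitely generated over $k'$, $p_i \in F_{i-1}[x]$, and $\trdeg_{k'}(F_{i-1}) \leqslant d$. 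Write $F_{i-1} = k'(\beta_{i-1}^{(1)}, \ldots, \beta_{i-1}^{(r_{i-1})})$ with each $\beta_{i-1}^{(j)} \in k'K$.

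Next, choose $l \subset k'$ finitely generated over $k$ satisfying: (i) every $\beta_{i-1}^{(j)}$ lies in $lK$, which is possible since $k'K = \bigcup_l lK$; (ii) expressing each coefficient of $p_i$ as a rational function in the $\beta_{i-1}^{(j)}$ with coefficients from $k'$, those $k'$-coefficients already lie in $l$, so that $p_i \in G_{i-1}[x]$ for $G_{i-1} := l(\beta_{i-1}^{(1)}, \ldots, \beta_{i-1}^{(r_{i-1})}) \subset lK$; and (iii) every algebraic dependence over $k'$ among the $\beta_{i-1}^{(j)}$ is already realised by a polynomial with coefficients in $l$, which, as in the proof of Lemma~\ref{lem.base-change}(c), forces $\trdeg_l(G_{i-1}) \leqslant \trdeg_{k'}(F_{i-1}) \leqslant d$. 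Each condition involves only finitely many elements of $k'$, so such an $l$ exists. Set $K_i^{(l)} := lK(\gamma_1, \ldots, \gamma_i) \subset K_i$. Since $K_{i-1}^{(l)} \subset K_{i-1}$, the polynomial $p_i$ remains irreducible over $K_{i-1}^{(l)}$, and therefore $[K_i^{(l)}: K_{i-1}^{(l)}] = \deg p_i$ and $K_i^{(l)}/K_{i-1}^{(l)}$ descends to $G_{i-1}(\gamma_i)/G_{i-1}$. Hence $\ed_l(K_i^{(l)}/K_{i-1}^{(l)}) \leqslant \trdeg_l(G_{i-1}) \leqslant d$ and $\lev_l(K_m^{(l)}/lK) \leqslant d$.

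It remains to arrange that $\alpha_{K_m^{(l)}} = 1$, and this is the step where the hypothesis that $\mathcal{F}$ respects direct limits is indispensable. The directed union $\bigcup_l K_m^{(l)}$, taken over finitely generated subextensions $k \subset l \subset k'$, equals $K_m$; hence $\mathcal{F}(K_m) = \varinjlim_l \mathcal{F}(K_m^{(l)})$. Since $\alpha_M = 1$ forces $\alpha_{K_m} = 1$, some $l$ already trivialises $\alpha_{K_m^{(l)}}$, and by enlarging if necessary we may assume the same $l$ satisfies (i)--(iii). Then $K_m^{(l)}/lK$ is a finite extension of level $\leqslant d$ splitting $\alpha_{lK}$, which yields $\rd_l(\alpha_{lK}) \leqslant d$ as required. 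The main subtlety is coordinating the algebraic conditions (i)--(iii), which handle combinatorial descent of the tower, with the final direct-limit step, which is what actually descends the splitting itself; these fit together because each imposes only a finite enlargement of $l$.
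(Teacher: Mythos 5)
Your argument is essentially sound but follows a genuinely different route from the paper. The paper first invokes Lemma~\ref{lem.functor-2}(c) to stabilize the base field (replacing $k$ by a finitely generated $l_0$ with $\rd_{l}(\alpha_{k'K})=d$ for all $l_0\subset l\subset k'$), and then works entirely with the level-$d$ closure: by Proposition~\ref{prop.level-d}(a) the class $\alpha$ splits over $(k'K)^{(d)}$, by Proposition~\ref{prop.level-d.1}(c) this closure is the directed union of the closures $L^{(d)}$ over finitely generated subextensions $L/K$ of $k'K$, and the direct-limit hypothesis then locates the splitting inside some $(lK)^{(d)}$. You instead descend the splitting tower by hand: you re-run, in this specific situation, the descent-of-towers argument that the paper has packaged once and for all into Lemma~\ref{lem.lev2} (which underlies Proposition~\ref{prop.level-d.1}(c)). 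What your route buys is independence from the closure formalism and it dispenses with the Lemma~\ref{lem.functor-2}(c) reduction, since you bound $\rd_l(\alpha_{lK})$ directly by $\lev_l(K_m^{(l)}/lK)$; what it costs is that you must redo the bookkeeping of Lemma~\ref{lem.lev2} correctly, and this is where you slip.

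The slip: you assert that the generators $\beta_{i-1}^{(j)}$ of $F_{i-1}$ lie in $k'K$, and accordingly your condition (i) places them in $lK$ and concludes $G_{i-1}\subset lK$. This is false for $i\geqslant 2$: Lemma~\ref{lem.algebras2} only gives $k'\subset F_{i-1}\subset K_{i-1}$, and $K_{i-1}$ is a proper finite extension of $k'K$ in general, so the $\beta_{i-1}^{(j)}$ need not lie in $k'K$ and condition (i) as stated cannot be met. The repair is local: since $K_{i-1}=k'K(\gamma_1,\ldots,\gamma_{i-1})$ is the directed union of the fields $K_{i-1}^{(l)}=lK(\gamma_1,\ldots,\gamma_{i-1})$ over finitely generated $l$, you can instead require in (i) that each $\beta_{i-1}^{(j)}$ lie in $K_{i-1}^{(l)}$, giving $G_{i-1}\subset K_{i-1}^{(l)}$ — which is all that your subsequent steps actually use (the descent of $K_i^{(l)}/K_{i-1}^{(l)}$ to $G_{i-1}(\gamma_i)/G_{i-1}$ and the irreducibility of $p_i$ over $K_{i-1}^{(l)}$ both only need $G_{i-1}\subset K_{i-1}^{(l)}\subset K_{i-1}$). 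You should also say explicitly that the generator $\gamma_i$ and its minimal polynomial $p_i$ may be chosen so that $p_i\in F_{i-1}[x]$, i.e.\ that $\gamma_i$ is replaced by a primitive element of the descended extension $E_i$ supplied by Lemmas~\ref{lem.algebras1}, \ref{lem.algebras2} and \ref{lem.descent1}; as written this is implicit. With these two adjustments the proof is correct.
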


\begin{proof} For any intermediate field $k \subset l \subset k'$, we have
\[ \rd_l(\alpha_{lK}) \geqslant \rd_l(\alpha_{k'K}) \geqslant \rd_{k'}(\alpha_{k' K}) ; \]
see~\eqref{e.functor2}. Our goal is to show that the opposite inequality holds for a suitably 
chosen intermediate field $k \subset l \subset k'$, where $l$ is finitely generated over $k$.

Set $d = \rd_{k'}(\alpha_{k' K})$. By Lemma~\ref{lem.functor-2}(c) there exists an intermediate extension 
$k \subset l_0 \subset k'$ such that $l_0$ is finitely generated over $k$,
and $d = \rd_{l}(\alpha_{k' K})$ for any intermediate field $l_0 \subset l \subset k'$. 
After replacing $k$ by $l_0$ and $\alpha$ by $\alpha_{l_0 K}$, we may assume without loss of generality 
that $k = l_0$. In particular, $d = \rd_k(\alpha_{k' K})$. By Proposition~\ref{prop.level-d}(a), $\alpha$ splits over $(k'K)^{(d)}$.
Since $\mathcal{F}$ preserves direct limits, $\alpha$ splits over $L^{(d)}$ for some intermediate extension
$K \subset L \subset (k'K)$ such that $L$ is finitely generated over $K$. Any such $L$ is contained in $l K$ for
some intermediate field $k \subset l \subset k'$, where $l$ is finitely generated over $k$. Thus $\alpha$ splits over $(lK)^{(d)}$.
By Proposition~\ref{prop.level-d}(a) this implies that $\rd_l(\alpha_{lK}) \leqslant d$, as claimed.
\end{proof}

\begin{proof}[Proof of Proposition~\ref{prop.functor5}]
The first inequality $\rd_{k'}(\mathcal{F}_{k'}) \leqslant \rd_{k}(\mathcal{F})$ is proved in Proposition~\ref{prop.functor2}(a).
We will thus focus on proving the second inequality.
Let $K$ be a field containing $k$ and 
$\alpha \in \mathcal{F}(K)$.
Our goal is to show that
\begin{equation} \label{e.functor3b}
\rd_k(\alpha) \leqslant \max \{ \rd_{k'}(\alpha_{k' K}) , \, 1 \}.
\end{equation}
If we can prove this, then taking the maximum over all $K \in \Fields_k$ and all $\alpha \in \mathcal{F}(K)$,
we will obtain the desired inequality $\rd_k(\mathcal{F}) \leqslant \max \{ \rd_{k'}(\mathcal{F}_{k'}), \, 1 \}$.

We begin by reducing to the case where $k'$ is finitely generated over $k$.  
Indeed, choose $l$ as in Lemma~\ref{lem.claim}. That is, $l$ is finitely generated over $k$ and
$\rd_l(\alpha_{lK}) = \rd_{k'}(\alpha_{k' K})$. For the purpose of 
proving~\eqref{e.functor3b} we may now replace $k'$ by $l$.

From now on we will assume $k'$ is finitely generated over $k$.
Choose a transcendence basis $t_1, \ldots, t_n$ for $k'/k$ and set
$k_i = k(t_1, \ldots, t_i)$, so that $k'$ is algebraic over $k_n$. By~\eqref{e.functor4b},
\[ \rd_{k_n}(\alpha_{k_n K}) = \rd_{k_n}(\alpha_{k' K}) =  \rd_{k'}(\alpha_{k'K}). \]
Thus we may further replace $k'$ by $k_n$. It remains to show that
\begin{equation} \label{e.functor3c}
\rd_k(\alpha) \leqslant \max \{ \rd_{k(t)}(\alpha_{K(t)}) , \, 1 \},
\end{equation}
where $t$ is a variable. Indeed, applying this inequality recursively, we readily deduce~\eqref{e.functor3b}:
\[ \rd_{k}(\alpha) \leqslant \max \{ \rd_{k_1}(\alpha_{k_1 K}), \, 1 \} \leqslant \ldots \leqslant 
\max \{ \rd_{k_n}(\alpha_{k_n K}), \, 1 \}.  \] 
(Recall that here $k' = k_n$.)

The remainder of the proof will be devoted to establishing the inequality~\eqref{e.functor3c}. First observe that we
may assume without loss of generality that $K$ is closed at level $1$. 
Indeed, let $K^{(1)}$ be the level $1$ closure of $K$. By Proposition~\ref{prop.level-d}(c), 
\[ \rd_k(\alpha) \leqslant \max \{ \rd_k(\alpha_{K^{(1)}}), \, 1 \} \]
and by Lemma~\ref{lem.functor-3}(a),
\[ \rd_{k(t)}(\alpha_{K^{(1)}(t)}) \leqslant \rd_{k(t)}(\alpha_{K(t)}) , \] 
where $K^{(1)}$ is the level $1$ closure of $K$. These inequalities show 
that in the course of proving~\eqref{e.functor3c}, we may replace $K$ by $K^{(1)}$ and $\alpha$ by $\alpha_{K^{(1)}}$.
In other words, for the purpose of proving~\eqref{e.functor3c}, we may assume that $K$ is closed at level $1$. In particular, we may assume that
$K$ is a perfect field; see Corollary~\ref{cor.level-1}.

We now proceed with the proof of~\eqref{e.functor3c} under the assumption that $K$ is a perfect field.
First we observe that by Lemma~\ref{lem.functor-3}(a), $\rd_{k(t)}(\alpha_{K(t)}) \leqslant \rd_{k(t)} ( \alpha_{K((t))} )$. 
Thus we only need to show that
\begin{equation} \label{e.functor3d}
\rd_k(\alpha) \leqslant \max \{ \rd_{k(t)}(\alpha_{K((t))}) , \, 1 \},
\end{equation}
Set $d = \rd_{k(t)}(\alpha_{K((t))})$. By definition there exists 
a finite field extension $L/K((t))$ such that $\alpha_L = 1$ and $\lev_{k(t)} \big( L/K((t)) \big) = d$. 

The field $K((t))$ carries a natural discrete valuation $\nu \colon K((t))^* \to \bbZ$
with uniformizer $t$, trivial on $K$. Lift $\nu$ to a discrete 
valuation $L^* \to \dfrac{1}{e} \bbZ$, where $e$ is the ramification index.
By abuse of notation I will continue to denote this lifted valuation by $\nu$.
I will denote the residue field of $L$ relative to this valuation by $L_{\nu}$.

Note that since $K((t))$ is complete with respect to $\nu$, so is $L$.
Moreover, since $K$ is perfect, so is $L_{\nu}$.
Note also that we are in equal characteristic situation here: 
\[ \Char(L_{\nu}) = \Char(K) = \Char(k) = \Char(k(t)) = \Char(K((t))) = \Char(L). \]
By the Cohen Structure Theorem, $L$ is isomorphic to the field of Laurent series ${L}_{\nu}((s))$
in one variable over $L_{\nu}$.
Since $\alpha_L = 1$ and $L_{\nu}$ is perfect, the natural map 
\[ \mathcal{F}({L}_{\nu}) \to \mathcal{F}(L_{\nu}((s))) = \mathcal{F}({L}) \]
has trivial kernel, by our assumption~\eqref{e.condition*}. 
We conclude that $\alpha_{L_\nu} = 1$. In other words, ${L}_{\nu}/K$ is a splitting extension for $\alpha$.
By Proposition~\ref{prop.lev-valuation}, 
\[ \rd_k(\alpha) \leqslant \lev_k(L_{\nu}/K) \leqslant \max \{ \lev_k(L/K((t))), \, 1 \} = \max \{ d, \, 1 \}, \]
where $K$ is the residue field of $K((t))$.
Proposition~\ref{prop.lev-valuation} applies here because we are assuming that the residue field $K_{\nu} = K$ 
is perfect. This completes the proof of Proposition~\ref{prop.functor5}.
\end{proof}

\section{The resolvent degree of an algebraic group}
\label{sect.rd-group}

Let $G$ be an algebraic group over $k$, not necessarily affine, smooth or connected.
Of particular interest to us will be the functor $H^1(*, G)$ whose objects over $K$ are isomorphism classes
of $G$-torsors over $\Spec(K)$. Here torsors are assumed to be locally trivial in the flat (fppf) topology. If $G$ is 
smooth over $k$, this is equivalent to being trivial in the \'etale topology. 
For every field $K$ containing $k$, the set $H^1(K, G)$ has a marked element, represented by the split $G$-torsor $G_K \to \Spec(K)$, 
where $G_ K = G \times_{\Spec(k)} \Spec(K)$.
The functor $H^1(\ast, G)$ satisfies condition~\eqref{e.splittable}. 

\begin{defn} \label{def.rd-group} We define $\ed_k(G) = \ed_k (H^1(\ast, G) )$ and 
$\rd_k(G) = \ed_k (H^1(\ast, G))$.
\end{defn}

The essential dimension~$\ed_k(G)$ of an algebraic group $G/k$ has been much studied; see~\cite{merkurjev-survey, icm}. 
If $G$ is an abstract finite group (viewed as an algebraic group over $k$)
and $\Char(k) = 0$, our definition of $\rd_k(G)$ above coincides with the definition given by Farb and Wolfson~\cite{farb-wolfson}. 
To the best of my knowledge, $\rd_k(G)$ has not been previously investigated for other algebraic groups $G/k$.

In view of Proposition~\ref{prop.functor2}(b), passing from $G$ to $G_{\overline{k}}$ 
does not change the resolvent degree. Thus from now on we will assume that $k$ is algebraically closed.

\begin{example} \label{ex.etale3} The functor $\text{\'Et}_n$ introduced in Example~\ref{ex.etale} 
is isomorphic to $H^1(\ast, \Sym_n)$ and thus $\rd_k(\text{\'Et}_n) = \rd_k(\Sym_n)$. By Example~\ref{ex.etale},
\[ \rd_k(\Sym_n) = \max \, \lev_k(L/K) . \]
The algebraic form of Hilbert's 13th Problem asks for the value of $\rd(n) = \rd_{\bbC}(\Sym_n)$.
\end{example}

\begin{remark} \label{rem.motivation} Recall that the classical definition of $\rd(n)$ is motivated by
wanting to express a root of a general polynomial $f(x) = x^n + a_1 x^{n-1} + \ldots + a_n$ as a composition 
of algebraic functions in $\leqslant d$ variables. This is equivalent to finding the smallest integer $d$ such 
that the $0$-cycle in $\mathbb A^1_K$ given by $f(x) = 0$ has an $L$-point,
for some field extension $L/K$ of level $\leqslant d$. If $G$ is an algebraic group over $k$, $K$ is a field containing $k$
and $T \to \Spec(K)$ is a $G$-torsor, then our definition of $\rd_k(T)$ retains this flavor. 
Indeed, saying that $T$ is split by $L$ is equivalent to saying that $T$ has an $L$-point. 
\end{remark}

\begin{remark} \label{rem.group2.5} (cf.~\cite[Lemma 3.2]{farb-wolfson})
Let $G$ be an algebraic group defined over a field $k$, $K$ be a field containing $k$, and
$\alpha \colon T \to \Spec(K)$ be a $G$-torsor. Setting $\mathcal{F} = H^1(\ast, G)$ in Lemma~\ref{lem.functor-3}, we obtain 
the inequalities $\rd_k(\alpha) \leqslant \ed_k(\alpha)$ and $\rd_k(G) \leqslant \ed_k(G)$.
\end{remark} 

\begin{remark} \label{rem.subgroup} (cf.~\cite[Lemma 3.13]{farb-wolfson})
Let $G$ be a finite group and $H$ be a subgroup. We will view $G$ and $H$ as algebraic groups over $k$.
The long exact sequence in Galois cohomology associated to $1 \longrightarrow H \stackrel{i}{\longrightarrow} G$ 
(see~\cite[Section I.5.4]{serre1997galois}) readily show that the induced morphism
$i_* \colon H^1(\ast, H) \to H^1(\ast, G)$ has trivial kernel. By Lemma~\ref{lem.functor3}(b), we conclude that 
$\rd_k(H) \leqslant \rd_k(G)$.
\end{remark}

\begin{example} \label{ex.solvable-group} (cf.~\cite[Corollary~3.4]{farb-wolfson})
If $G$ is a solvable finite group, then $\rd_k(G) \leqslant 1$. Indeed, every element of $H^1(K, G)$ can be split by a solvable extension $L/K$, and a solvable extension has level $\leqslant 1$ by Lemma~\ref{lem.radical}(a).
Moreover, if we further assume that $G \neq 1$, then $\rd_k(G) = 1$. This follows from Lemma~\ref{lem.functor-1} and Proposition~\ref{prop.functor2}(b).
\end{example}

Recall that an algebraic group $G$ defined over a field $k$ is called special if $H^1(\ast, G)$ is the trivial functor, i.e.,
$H^1(K, G) = 1$ for every field $K$ containing $k$. This notion 
is due to Serre~\cite{serre-special}. (Note that \cite{serre-special} is reprinted in~\cite{serre-reprinted}.)

\begin{lem} \label{lem.group1} (a) Let $G$ be an algebraic group over an algebraically closed field $k$. Then

\smallskip
(a) $G$ is special if and only if $\rd_k(G) = 0$.

\smallskip
(b) If $G$ is connected and solvable, then $\rd_k(G) = 0$.

\smallskip
(c) If $G$ is the special linear group $\SL_n$ or the symplectic group $\Sp_{2n}$, then $\rd_k(G) = 0$ for any $n \geqslant 1$.
\end{lem}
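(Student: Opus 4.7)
The plan is to reduce all three parts to part (a), which packages the interaction between $\rd_k$ and the functor $H^1(\ast, G)$, and then to verify triviality of the relevant Galois cohomology sets by classical means.

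For part (a), I would simply apply Lemma~\ref{lem.functor-1} to $\mathcal{F} = H^1(\ast, G)$, which satisfies condition~\eqref{e.splittable}: since $k$ is algebraically closed, $\rd_k(\mathcal{F}) = 0$ if and only if $\mathcal{F}$ is the trivial functor, which by definition means exactly that $G$ is special.

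For part (b), I would invoke the classical structure theorem for connected solvable (linear) algebraic groups over an algebraically closed field: such a $G$ admits a composition series $1 = G_0 \subset G_1 \subset \ldots \subset G_n = G$, with each $G_{i-1}$ normal in $G_i$ and each successive quotient $G_i/G_{i-1}$ isomorphic to $\bbG_a$ or $\bbG_m$. Combining the exact sequence of pointed sets
\[
H^1(K, G_{i-1}) \to H^1(K, G_i) \to H^1(K, G_i/G_{i-1})
\]
with the vanishings $H^1(K, \bbG_a) = 0$ (additive Hilbert 90) and $H^1(K, \bbG_m) = 0$ (multiplicative Hilbert 90), an induction on $n$ yields $H^1(K, G) = 1$ for every $K \in \Fields_k$. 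Thus $G$ is special, and part (a) gives $\rd_k(G) = 0$.

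For part (c), I would verify directly that $\SL_n$ and $\Sp_{2n}$ are special. The short exact sequence $1 \to \SL_n \to \GL_n \xrightarrow{\det} \bbG_m \to 1$ induces an exact sequence of pointed sets
\[
\GL_n(K) \xrightarrow{\det} K^{\ast} \to H^1(K, \SL_n) \to H^1(K, \GL_n);
\]
the determinant is surjective on $K$-points, and $H^1(K, \GL_n)$ classifies $n$-dimensional $K$-vector spaces up to isomorphism and hence is trivial, forcing $H^1(K, \SL_n) = 1$. For $\Sp_{2n}$, the pointed set $H^1(K, \Sp_{2n})$ classifies non-degenerate alternating bilinear forms on a $2n$-dimensional $K$-vector space, and any two such forms are isomorphic (symplectic bases exist over every field), so $H^1(K, \Sp_{2n}) = 1$. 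In both cases part (a) supplies the conclusion.

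The only point requiring mild care is that Definition~\ref{def.rd-group} is phrased in terms of fppf cohomology. Since $\bbG_a$, $\bbG_m$, $\GL_n$, $\SL_n$, and $\Sp_{2n}$ are all smooth over $k$, their fppf and \'etale $H^1$ coincide, and the long exact sequences and Hilbert-90 vanishings cited above go through verbatim; no genuine obstacle arises, and the lemma is a bookkeeping consequence of the functorial machinery of Section~\ref{sect.rd-functor} combined with standard facts about linear algebraic groups.
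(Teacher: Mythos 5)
Your proposal is correct and follows the same route as the paper: part (a) is exactly Lemma~\ref{lem.functor-1} applied to $\mathcal{F} = H^1(\ast, G)$, and parts (b) and (c) reduce to the specialness of connected solvable groups, $\SL_n$ and $\Sp_{2n}$. The only difference is that the paper simply cites Serre's exposé \cite[Section 4.4]{serre-special} for these specialness statements, whereas you reprove them via the composition series with $\bbG_a$/$\bbG_m$ quotients, Hilbert 90, and the classification of symplectic forms — all standard and correct.
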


\begin{proof} (a) By Lemma~\ref{lem.functor-1}, $\rd_k(G) = 0$ if and only if $H^1(\ast, G)$ is the trivial functor, i.e., if and only
if $G$ is special.

(b) Every connected solvable group is special; see~\cite[Section 4.4(a)]{serre-special}.

(c) $\SL_n$ and $\Sp_{2n}$ are special; see~\cite[Section 4.4(b) and (c)]{serre-special}. 
\end{proof}

We now record several simple but useful observations about the behavior of resolvent degree in exact sequences of groups.

\begin{prop} \label{prop.group2} Consider a short exact sequence of algebraic groups
\begin{equation} \label{e.exact} 
1 \to A \to B \to C \to 1 
\end{equation}
defined over a field $k$. Then

\smallskip
(a) $\rd_k(B) \leqslant \max \, \{  \rd_k(A), \, \rd_k(C)  \}$. 

\smallskip
(b) If $B$ is isomorphic to the direct product $A \times C$, then  $\rd_k(B) = \max \, \{  \rd_k(A), \, \rd_k(C)  \}$.

\smallskip
(c) If $G$ is a diagonalizable algebraic group over $k$, then $\rd_k(G) \leqslant 1$.

\smallskip
(d) Suppose \eqref{e.exact} is a central short exact sequence and $A$ is diagonalizable over $k$.
Then \[ \text{$\rd_k(B) \leqslant \max \, \{ \rd_k(C),  1 \}$ and $\rd_k(C) \leqslant \max \, \{ \rd_k(B),  1 \}$.} \]
\end{prop}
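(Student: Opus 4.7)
My plan is to deduce all four parts from the general functorial framework of Section~\ref{sect.rd-functor} applied to the flat cohomology functor $H^1(\ast,-)$, with Proposition~\ref{prop.h2} supplying the crucial extra ingredient in parts (c) and (d).

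For (a), the short exact sequence~\eqref{e.exact} yields the standard exact sequence of pointed functors
\[ H^1(\ast, A) \to H^1(\ast, B) \to H^1(\ast, C), \]
where exactness is taken in $\Sets'$ (the preimage of the marked class equals the image of the previous map). No smoothness hypothesis on $A,B,C$ is required, since one can work in fppf cohomology. Lemma~\ref{lem.functor3}(a) then delivers the bound. For (b), when $B = A \times C$ one has $H^1(\ast, B) = H^1(\ast, A) \times H^1(\ast, C)$ naturally in $K$, so the equality is immediate from Lemma~\ref{lem.functor3}(d).

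For (c), I invoke the structure theorem for diagonalizable groups: since a diagonalizable group is by definition a closed subgroup of the split torus $\bbG_m^d$, its character lattice is a finitely generated abelian group with trivial Galois action, so $G \simeq \bbG_m^r \times \mu_{n_1} \times \cdots \times \mu_{n_s}$ over $k$. By Lemma~\ref{lem.functor3}(d) it suffices to verify $\rd_k(\bbG_m) \leqslant 1$ and $\rd_k(\mu_n) \leqslant 1$. Hilbert 90 gives $H^1(K, \bbG_m) = 1$, hence $\rd_k(\bbG_m) = 0$. For $\mu_n$, Kummer theory (in flat cohomology when $\operatorname{char}(k) \mid n$) identifies $H^1(K, \mu_n)$ with $K^*/(K^*)^n$, and every class $[a]$ is split by the radical extension $K(a^{1/n})$, which has level $\leqslant 1$ by Lemma~\ref{lem.radical}(b).

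Finally, for (d), the first inequality is immediate from (a) together with (c). For the second inequality the centrality hypothesis is essential: it extends the cohomology sequence by one further term to
\[ H^1(\ast, B) \to H^1(\ast, C) \stackrel{\delta}{\longrightarrow} H^2(\ast, A), \]
which is again exact as a sequence of pointed functors. Applying Lemma~\ref{lem.functor3}(a) once more together with the estimate $\rd_k(H^2(\ast, A)) \leqslant 1$ from Proposition~\ref{prop.h2} yields
\[ \rd_k(C) \leqslant \max \{ \rd_k(B), \, \rd_k(H^2(\ast, A)) \} \leqslant \max \{ \rd_k(B), \, 1 \}. \]
The only point that calls for any care is verifying that the four- and five-term cohomology sequences are available in the fppf topology when $A$ is not smooth (for example $\mu_p$ in characteristic $p$), but this is standard. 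Accordingly I expect no real obstacle; the only non-formal input is the structural decomposition of diagonalizable groups used in (c).
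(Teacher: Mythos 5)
Your proposal is correct and follows essentially the same route as the paper: exactness of the fppf cohomology sequences plus Lemma~\ref{lem.functor3} for (a), (b) and the second half of (d), the splitting of a diagonalizable group into $\bbG_m$ and $\mu_n$ factors for (c), and Proposition~\ref{prop.h2} for (d). The only (harmless) deviation is in bounding $\rd_k(\mu_n)$: you use Kummer theory and Lemma~\ref{lem.radical}(b) to split each class by a radical extension of level $\leqslant 1$, whereas the paper invokes $\rd_k(\mu_n)\leqslant \ed_k(\mu_n)=1$; both work.
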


\begin{proof} (a) follows from Lemma~\ref{lem.functor3}(a) applied to the exact sequence of functors
$H^1(\ast, A) \to H^1(\ast, B) \to H^1(\ast, C) $
induced by~\eqref{e.exact}.

(b) follows from Lemma~\ref{lem.functor3}(d), since in this case
the functor $H^1(\ast, B)$ is isomorphic to $H^1(\ast, A) \times H^1(\ast, C)$.

(c) Write $G$ as a product $G_1 \times \ldots \times G_r$, where each $G_i$ is isomorphic 
either to $\bbG_m$ or $\mu_n$ for some $n \geqslant 2$. By part (b), it suffices to show that $\rd_k(G_i) \leqslant 1$
for each $i$. Now recall that $\rd_k(\bbG_m) = 0$ by Lemma~\ref{lem.group1}(b). On the other hand, 
$\rd_k(\mu_n) \leqslant \ed_k(\mu_n)$ by Remark~\ref{rem.group2.5}, and $\ed_k(\mu_n) = 1$ for every $n  \geqslant 2$; see, 
e.g.,~\cite[Example 3.5]{merkurjev-survey}.

(d) To prove the first inequality, combine parts (a) and (c).
The second inequality follows from Lemma~\ref{lem.functor3}(a) applied to the exact sequences of functors
$H^1(\ast, B) \to H^1(\ast, C) \to H^2(\ast, A) $
induced by~\eqref{e.exact}. Recall that $\rd_k ( H^2(*, A) ) \leqslant 1$ by Proposition~\ref{prop.h2}. 
\end{proof}

\begin{cor} \label{cor.group3}
Let $G$ be a connected reductive affine algebraic group, $T$ be a split maximal torus of $G$, 
$N$ be the normalizer of $T$ in $G$, and $W = N/T$ be the Weyl group. Then 
\[ \rd_k(W) \geqslant \rd_k(N) \geqslant \rd_k(G).  \]
\end{cor}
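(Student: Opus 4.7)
The plan is to split the chain $\rd_k(W) \geqslant \rd_k(N) \geqslant \rd_k(G)$ into two inequalities and handle each by a standard reduction for torsors under reductive groups.

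For the first inequality $\rd_k(W) \geqslant \rd_k(N)$, I would apply Proposition~\ref{prop.group2}(a) to the defining short exact sequence
\[ 1 \to T \to N \to W \to 1, \]
obtaining $\rd_k(N) \leqslant \max\{\rd_k(T), \rd_k(W)\}$. Since $T$ is a split torus, it is isomorphic to $\bbG_m^r$ and hence connected and solvable, so Lemma~\ref{lem.group1}(b) gives $\rd_k(T) = 0$ (alternatively, split tori are special by Lemma~\ref{lem.group1}(c)-style reasoning, so $H^1(\ast, T)$ is the trivial functor). The desired bound drops out.

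For the second inequality $\rd_k(N) \geqslant \rd_k(G)$, the essential input is the classical reduction-of-structure-group theorem for connected reductive groups: for every field $K$ containing $k$, every $G$-torsor over $\Spec(K)$ admits a reduction of structure to $N$, so that the natural morphism of functors $H^1(\ast, N) \to H^1(\ast, G)$ is surjective. Granting this, Lemma~\ref{lem.functor3}(c) immediately yields $\rd_k(G) \leqslant \rd_k(N)$. The main obstacle is precisely this surjectivity statement, which is the only nonformal step in the argument; one cites the standard result (due to Borel-Springer over arbitrary fields, or Grothendieck in the reductive group-scheme setting) that for a connected reductive $G$ with split maximal torus $T$ and its normalizer $N$, the map $H^1(K, N) \to H^1(K, G)$ is surjective for every $K/k$.
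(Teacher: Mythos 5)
Your proposal is correct and follows essentially the same route as the paper: the first inequality comes from Proposition~\ref{prop.group2}(a) applied to $1 \to T \to N \to W \to 1$ together with $\rd_k(T)=0$, and the second from the surjectivity of $H^1(\ast,N)\to H^1(\ast,G)$ (the paper cites \cite[Corollary 5.3]{chernousov2008reduction} and \cite[Lemma III.4.3.6]{serre1997galois}) combined with Lemma~\ref{lem.functor3}(c).
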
 

\begin{proof} By~\cite[Corollary 5.3]{chernousov2008reduction}
the natural morphism $H^1(K, N) \to H^1(K, G)$ is surjective; 
see also~\cite[Lemma III.4.3.6]{serre1997galois}. 
Hence, $\rd_k(N) = \rd(H^1(\ast, N)) \geqslant \rd_k(H^1(\ast, G)) = \rd_k(G)$
by Lemma~\ref{lem.functor3}(c). 

The inequality $\rd_k(W) \geqslant \rd_k(N)$ follows from Proposition~\ref{prop.group2}(a) 
applied to the exact sequence $1 \to T \to N \to W \to 1$. Note that by Lemma~\ref{lem.group1}(b),
$\rd_k(T) = 0$.
\end{proof}

\section{The resolvent degree of an abelian variety}
\label{sect.rd-abelian-variety}

In this section we will assume that the base field $k$ is algebraically closed. 
This assumption is harmless in view of Proposition~\ref{prop.functor2}(b).

Recall that for every algebraic group $G$ defined over $k$, there exists a smooth (i.e., reduced) 
subgroup $G_{\rm red}$ such that $G(k) = G_{\rm red}(k)$; see~\cite[Exp.VI$_A$, Section 0.2]{SGA3I}. 

\begin{lem} \label{lem.non-reduced} Let $K$ be a field containing $k$ and let $i \colon G_{\rm red} \hookrightarrow G$ 
be the natural inclusion and  $i_* \colon H^1(K, G_{\rm red}) \to H^1(K, G)$ be the induced map in cohomology. Then

\smallskip
(a) $i_*$ is injective.

(b) If $K$ is a perfect field, then $i_*$ is bijective.
\end{lem}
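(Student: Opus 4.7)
The plan is to exploit the short exact sequence of fppf group sheaves
\[ 1 \to G_{\rm red} \to G \to C \to 1, \]
where $C = G/G_{\rm red}$. Because $k$ is algebraically closed, $G_{\rm red}$ is normal in $G$ and $C$ is an infinitesimal finite $k$-group scheme: its coordinate ring $\mathcal{O}(C)$ is a local Artinian $k$-algebra with residue field $k$ and nilpotent maximal ideal. The first key observation is that $C(K) = 1$ for every field $K/k$: any $K$-algebra homomorphism $\mathcal{O}(C) \to K$ kills the (nilpotent) maximal ideal and hence factors through $\mathcal{O}(C)/\operatorname{nil} = k \hookrightarrow K$, yielding only the identity. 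The same argument applies to any twisted form of $C$, which remains infinitesimal.

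For part (a), I will use the exact sequence of pointed sets
\[ C(K) \xrightarrow{\delta} H^1(K, G_{\rm red}) \xrightarrow{i_*} H^1(K, G) \]
in fppf cohomology. Since $C(K) = 1$, the image of $\delta$ is trivial, so $\ker(i_*) = \{*\}$; this gives injectivity at the basepoint. To deduce injectivity at an arbitrary class $\alpha \in H^1(K, G_{\rm red})$, I will apply the standard twisting technique (cf. Serre, \emph{Cohomologie galoisienne}, I.5): twisting the whole sequence by (a cocycle representing) $\alpha$ produces an exact sequence $1 \to {}^\alpha G_{\rm red} \to {}^\alpha G \to {}^\alpha C \to 1$ in which $\alpha$ corresponds to the basepoint of $H^1(K, {}^\alpha G_{\rm red})$ under the standard twisting bijection. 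Since ${}^\alpha C$ is a form of $C$ and hence still infinitesimal, $({}^\alpha C)(K) = 1$, and the same reasoning shows that the fiber of $i_*$ above $i_*(\alpha)$ is the singleton $\{\alpha\}$.

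For part (b), exactness of
\[ H^1(K, G_{\rm red}) \xrightarrow{i_*} H^1(K, G) \to H^1(K, C) \]
reduces the problem to showing $H^1(K, C) = 1$ whenever $K$ is perfect. Let $T$ be a $C$-torsor over $K$. Since $C$ is affine and finite over $k$, the torsor $T = \Spec B$ is finite and affine over $K$, and fppf-locally isomorphic to $C_K$. After base change to $\overline{K}$, $B \otimes_K \overline{K} \cong \mathcal{O}(C) \otimes_k \overline{K}$, which is a local ring with residue field $\overline{K}$ and a single $\overline{K}$-point. This forces $B$ to be a local Artinian $K$-algebra whose residue field $F$ is purely inseparable over $K$ (the number of geometric points equals $[F:K]_{\rm sep} = 1$). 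Since $K$ is perfect, $F = K$, and the quotient $B \twoheadrightarrow B/\mathfrak n = K$ is a $K$-algebra homomorphism, i.e., a $K$-rational point of $T$. A torsor with a rational point is trivial, so $H^1(K, C) = 1$.

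The subtlest step, which I expect to be the main obstacle, is the twisting argument in part (a): one must check carefully that ${}^\alpha C$ is well-defined (using normality of $G_{\rm red}$ in $G$, so that $G$ acts on $C$ by inner automorphisms) and that the property of being infinitesimal descends from $C_{\overline{K}}$ to the $K$-form ${}^\alpha C$. The argument in part (b) then reduces to the elementary fact that a perfect field admits no nontrivial purely inseparable algebraic extensions.
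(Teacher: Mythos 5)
Your argument hinges on $G_{\rm red}$ being normal in $G$, so that $C = G/G_{\rm red}$ is a group scheme and you may invoke the exact sequences $C(K) \to H^1(K, G_{\rm red}) \to H^1(K,G)$ and, crucially for part (b), $H^1(K,G_{\rm red}) \to H^1(K,G) \to H^1(K,C)$. But $G_{\rm red}$ need not be normal in $G$, even over an algebraically closed field: take $G = \alpha_p \rtimes \bbG_m$ in characteristic $p$, with $\bbG_m$ acting on $\alpha_p$ by scaling. Then $G_{\rm red} = \bbG_m$ (the locus $x=0$), and conjugating $(0,u)$ by $(a,1)$ yields $\bigl((1-u)a, u\bigr)$, which does not lie in $\bbG_m(R)$ once $a \neq 0$ and $1-u$ is a unit. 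So in general there is no short exact sequence of group sheaves, no object $H^1(K,C)$, and the reduction of part (b) to ``$H^1(K,C)=1$'' does not make sense. Part (a) survives in modified form, since the exact sequence of pointed sets $(G/H)(K) \to H^1(K,H) \to H^1(K,G)$ and its twisted versions hold for an arbitrary subgroup $H$, with $G/H$ a homogeneous space rather than a group; but part (b) as written has a genuine gap.

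The fix is the route the paper takes: by Serre, \emph{Cohomologie galoisienne}, I.5.4, Corollaire 2, the fiber of $i_*$ over any $\gamma \in H^1(K,G)$ is identified with the set of orbits of ${}_{\gamma}G(K)$ on $\bigl({}_{\gamma}(G/G_{\rm red})\bigr)(K)$, where ${}_{\gamma}(\cdot)$ denotes twisting. Your local-Artinian computation is then exactly what is needed, applied to this twisted homogeneous space instead of to a torsor under $C$: since ${}_{\gamma}(G/G_{\rm red})$ becomes isomorphic to the infinitesimal scheme $(G/G_{\rm red})_{\overline{K}}$ after base change, it is $\Spec$ of a local Artinian $K$-algebra whose residue field is purely inseparable over $K$. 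Hence it has at most one $K$-point, so each fiber of $i_*$ has at most one element, giving (a); and when $K$ is perfect the residue field equals $K$, so each fiber has exactly one element, giving (b). So the substance of your argument is correct, but it must be run fiberwise on the twisted homogeneous space rather than through a quotient group that need not exist.
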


\begin{proof} Let $\gamma \in H^1(K, G)$. 
By \cite[I.5.4, Corollary 2]{serre1997galois}, the fiber of $(i_*)^{-1}(\gamma)$ may be identified with the set of orbits of
${\, }_{\gamma}G(K)$ in $({\,}_{\gamma}G/{\, }_{\gamma} G_{\rm red})(K)$.~\footnote{In~\cite{serre1997galois} 
only \'etale cohomology is considered. The same argument works for flat cohomology.}
Here ${\,}_{\gamma} G$ denotes the twist of $G$ by a cocycle representing $\gamma$, and similarly for $G_{\rm red}$. 
Since the homogeneous space $G/G_{\rm red}$ is purely inseparable over $\Spec(k)$, 
the homogeneous space  ${\,}_{\gamma}G/{\, }_{\gamma} G_{\rm red}$  
is purely inseparable over $\Spec(K)$. (To see this, pass to a splitting field of $\gamma$.)
Thus $({\,}_{\gamma}G/{\, }_{\gamma} G_{\rm red})$  
can have at most one $\Spec(K)$-point. This shows that the fiber of $(i_*)^{-1}(\gamma)$ has at most one element, proving (a). 
If $K$ is perfect, then ${\, }_{\gamma}G(K)$ in $({\,}_{\gamma}G/{\, }_{\gamma} G_{\rm red})$ has exactly one $K$-point. In this case
the fiber of $(i_*)^{-1}(\gamma)$ has exactly one element for every $\gamma \in H^1(K, G_{\rm red})$. This proves (b).
\end{proof}

Recall that an infinitesimal group is a connected $0$-dimensional group. Non-trivial infinitesimal groups exist only in prime characteristic.

\begin{prop} \label{prop.reduced} Let $G$ be an algebraic group over $k$.
Then 

\smallskip
(a) $\rd_k(G) \leqslant \max \{ \rd_k(G_{\rm red}), 1 \}$.

\smallskip
(b) If $G$ be an infinitesimal group over $k$, then $\rd_k(G) \leqslant 1$.

\smallskip
(c) Let $G$ be a $0$-dimensional abelian group over $k$ (not necessarily smooth or connected). Then $\rd_k(G) \leqslant 1$.
\end{prop}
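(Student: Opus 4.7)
My plan is to prove part (a) first and then deduce parts (b) and (c) as easy corollaries.

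For part (a), the idea is to pass to the level $1$ closure $K^{(1)}$ and use that $G_{\rm red}$ and $G$ become indistinguishable there. More precisely, fix a field $K/k$ and a class $\alpha \in H^1(K, G)$. The functor $H^1(\ast, G)$ respects direct limits (a standard fact for finitely presented group schemes, since fppf torsors descend through direct limits), so Proposition~\ref{prop.level-d}(c) reduces the task to showing
\[ \rd_k(\alpha_{K^{(1)}}) \;\leqslant\; \rd_k(G_{\rm red}). \]
By Corollary~\ref{cor.level-1}, $K^{(1)}$ is perfect, so Lemma~\ref{lem.non-reduced}(b) tells us that
\[ i_* \colon H^1(K^{(1)}, G_{\rm red}) \;\longrightarrow\; H^1(K^{(1)}, G) \]
is a bijection. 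Write $\alpha_{K^{(1)}} = i_*(\beta)$ for some $\beta \in H^1(K^{(1)}, G_{\rm red})$. By definition of $\rd_k(G_{\rm red})$, there is a finite splitting field $L/K^{(1)}$ for $\beta$ with $\lev_k(L/K^{(1)}) \leqslant \rd_k(G_{\rm red})$. Functoriality of $i_*$ then forces $\alpha_{K^{(1)}}$ to split over $L$ as well, completing the argument.

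For part (b), note that an infinitesimal group $G$ is by definition connected and $0$-dimensional, so $G(k) = \{e\}$, whence $G_{\rm red} = \Spec(k)$ is the trivial group. Applying part (a) gives $\rd_k(G) \leqslant \max\{0, 1\} = 1$. For part (c), since $G$ is $0$-dimensional over the algebraically closed field $k$, the reduced subgroup $G_{\rm red}$ is étale, hence corresponds to the finite abstract group $G(k)$. Commutativity of $G$ passes to $G_{\rm red}$, so $G_{\rm red}$ is a finite abelian (in particular solvable) group. Example~\ref{ex.solvable-group} then gives $\rd_k(G_{\rm red}) \leqslant 1$, and part (a) yields $\rd_k(G) \leqslant 1$.

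The only nontrivial step is (a), and the main subtlety there is the invocation of Proposition~\ref{prop.level-d}(c): one must know that $H^1(\ast, G)$ respects direct limits of fields. This is well-known for algebraic groups $G$ of finite presentation (cocycles and coboundaries are defined over finitely generated subextensions), and is presumably recorded elsewhere in the paper when $H^1(\ast, G)$ is first treated as a functor in Definition~\ref{def.rd-group}; I would cite that fact rather than reprove it. Everything else is a direct assembly of the previously established machinery.
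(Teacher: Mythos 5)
Your proof is correct and, for parts (a) and (b), is essentially the paper's argument: reduce via Proposition~\ref{prop.level-d} to fields closed at level $1$, use Corollary~\ref{cor.level-1} to get perfectness, and then Lemma~\ref{lem.non-reduced}(b) to pull $\alpha$ back to $H^1(\cdot, G_{\rm red})$; the injectivity claim about $i_*$ on splitting fields is exactly how the paper concludes. (The paper also takes for granted, at the start of Section~\ref{sect.base-change-alg}, that $H^1(\ast,G)$ respects direct limits, so your citation of that fact is in line with the text.) The only place you diverge is part (c): the paper runs the connected--\'etale sequence $1 \to G^0 \to G \to G/G^0 \to 1$, bounds $\rd_k(G^0) \leqslant 1$ by part (b) and $\rd_k(G/G^0) \leqslant 1$ by Example~\ref{ex.solvable-group}, and combines them with Proposition~\ref{prop.group2}(a); you instead apply part (a) directly, observing that $G_{\rm red}$ is \'etale, hence constant over the algebraically closed $k$, and a finite abelian (so solvable) group. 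Your route is marginally shorter and relies on the same underlying inputs (the standing assumption that $k$ is algebraically closed, and Example~\ref{ex.solvable-group}); the paper's route has the mild advantage of not needing to identify $G_{\rm red}$ explicitly, but both are valid.
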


\begin{proof} (a) In view of Proposition~\ref{prop.level-d}(d), it suffices to show that $\rd_k(\alpha) \leqslant \rd_k(G_{\rm red})$
for every field $K/k$ such that $K$ is closed at level $1$ and every $\alpha \in H^1(K, G)$. Indeed, every such field $K$ is perfect;
see Corollary~\ref{cor.level-1}. Thus by Lemma~\ref{lem.non-reduced},
$\alpha$ is the image of some $\beta \in H^1(K, G_{\rm red})$. 
Every field extension of $K$ which splits $\beta$ also splits $\alpha$. This tells us that
\[ \rd_k(\alpha) \leqslant \rd_k(\beta) \leqslant \rd_k(G_{\rm red}), \]
as claimed.

\smallskip
(b) If $G$ is infinitesimal, then $G_{\rm red} = 1$. Thus $\rd_k(G_{\rm red}) = 0$, and $\rd_k(G) \leqslant 1$ by part (a).

\smallskip
(c) Consider the exact sequence 
$1 \to G^0 \to G \to G/G^0 \to 1$.
The group $G^0$ is infinitesimal; thus $\rd_k(G^0) \leqslant 1$ 
by part (b). On the other hand, by~\cite[Exp.~VI$_A$, Proposition 5.5.1]{SGA3I}, $G/G^0$ is \'etale. Since $k$ is algebraically closed, 
this tells us that $G/G^0$ is constant, i.e., is isomorphic to an abstract finite abelian group, viewed as an algebraic group over $k$. 
In particular, 
$\rd_k(G/G^0) \leqslant 1$
by Example~\ref{ex.solvable-group}. Applying Proposition~\ref{prop.group2}(a) to the exact sequence~$1 \to G^0 \to G \to G/G^0 \to 1$,
we obtain $\rd_k(G) \leqslant 1$.
\end{proof}

\begin{prop} \label{prop.abelian} Let $A$ be an abelian variety over $k$. Then $\rd_k(A) \leqslant 1$.
\end{prop}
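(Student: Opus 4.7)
The plan is to reduce the computation of $\rd_k(A)$ to the finite torsion subgroup schemes $A[n]$, to which Proposition~\ref{prop.reduced}(c) applies. Let $K \in \Fields_k$ and $\alpha \in H^1(K, A)$ be arbitrary; I want to show that $\rd_k(\alpha) \leqslant 1$.

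The first step is to show that every class $\alpha$ is torsion. Let $T \to \Spec(K)$ be the $A$-torsor representing $\alpha$. Since $A$ is smooth and projective, so is $T$, as it is an fppf twist of $A$. A nonempty smooth scheme over a field always acquires a rational point over some finite separable extension, so there exists a finite separable $L/K$ with $T(L) \neq \emptyset$, and hence $\alpha_L = 1$. The standard restriction-corestriction identity then gives $[L:K]\cdot \alpha = \operatorname{Cor}_{L/K}\operatorname{Res}_{L/K}\alpha = 0$. Set $n = [L:K]$, so $\alpha$ lies in the $n$-torsion of $H^1(K, A)$.

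Next I would use the Kummer sequence to lift $\alpha$ to the $n$-torsion subscheme. The map $[n] \colon A \to A$ is finite, faithfully flat, and surjective, with scheme-theoretic kernel $A[n]$, so we obtain a short exact sequence $0 \to A[n] \to A \xrightarrow{[n]} A \to 0$ of fppf sheaves. Its long exact sequence contains
\[ H^1(K, A[n]) \longrightarrow H^1(K, A) \xrightarrow{\, [n]\,} H^1(K, A), \]
and since $n\alpha = 0$, there exists $\beta \in H^1(K, A[n])$ mapping to $\alpha$. Now $A[n]$ is a $0$-dimensional commutative algebraic $k$-group, so Proposition~\ref{prop.reduced}(c) gives $\rd_k(A[n]) \leqslant 1$. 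Consequently $\beta$ is split by some finite $L'/K$ with $\lev_k(L'/K) \leqslant 1$; the functoriality of the Kummer sequence then forces $\alpha_{L'} = 1$, so $\rd_k(\alpha) \leqslant \lev_k(L'/K) \leqslant 1$. Taking the supremum over all $K$ and $\alpha$ yields $\rd_k(A) \leqslant 1$.

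The main subtlety I expect is ensuring exactness of the Kummer sequence when $\Char(k)$ divides $n$: this is precisely why one must work in the fppf topology rather than the \'etale topology. It is legitimate because torsors in this paper are fppf by definition and $[n]\colon A \to A$ really is faithfully flat with scheme-theoretic kernel $A[n]$. The other point needing a moment's care is that the corestriction step requires $L/K$ to be finite separable, which is exactly what the smoothness of $T$ supplies.
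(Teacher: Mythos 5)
Your proof is correct and follows essentially the same route as the paper: both reduce via the multiplication-by-$n$ Kummer sequence to the surjection $H^1(\ast, A[n]) \to H^1(\ast,A)[n]$ and then invoke Proposition~\ref{prop.reduced}(c) for the $0$-dimensional commutative group $A[n]$. The only difference is cosmetic — the paper simply cites the torsionness of the Weil--Ch\^atelet group, whereas you supply the standard restriction--corestriction argument for it, which is a fine (and correct) elaboration.
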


\begin{proof} Let $K$ be
a field containing $k$. Recall that $H^1(K, A)$ (the Weil-Ch$\hat{\rm a}$telet group of $A_K$) is torsion. 
Thus it suffices to show that $\rd_k \big( H^1(\ast, A)[d] \big)  \leqslant 1$
for every integer $d \geqslant 1$. Examining the exact sequence in cohomology associated to
\[  \xymatrix{  1 \ar@{->}[r] & A[d] \ar@{->}[r] & A \ar@{->}[r]^{\times \, d \quad} & A \ar@{->}[r] & 1 } \]
we conclude that $H^1(\ast, A[d])$ surjects onto 
$H^1(\ast, A)[d]$; see \cite[Section VIII.2]{silverman}~\footnote{$A$ is assumed to be an elliptic curve in~\cite{silverman}, but the same 
argument goes through 
for an abelian variety of arbitrary dimension.}.
Lemma~\ref{lem.functor3}(c) now tells us that
\[ \rd_k(A) = \rd_k \big( H^1(\ast, A)[d] \big)  \leqslant \rd_k \big( H^1(\ast, A[d]) \big) = \rd_k(A[d]). \]
Since $A[d]$ is a $0$-dimensional abelian group over $k$, 
$\rd_k (A[d]) \leqslant 1$ by Proposition~\ref{prop.reduced}(c), and part (c) follows.
\end{proof}

\section{Proof of Theorem~\ref{thm.main1}}
\label{sect.proof-main1}

Setting $\mathcal{F}$ to be the non-abelian cohomology functor $H^1(\ast, G)$
in Proposition~\ref{prop.functor2}(b), we obtain $\rd_k(G) = \rd_{\overline{k}}(G)$ and
$\rd_{k'}(G) = \rd_{\overline{k'}}(G_{\overline{k'}})$, where $\overline{k}$ is the algebraic closure of $k$ and similarly for $k'$.
After replacing $k$ and $k'$ by $\overline{k}$ and $\overline{k'}$, we may assume that $k$ and $k'$ are algebraically closed. 

The following two lemmas will allow us to complete the proof by appealing to Proposition~\ref{prop.functor5}. Lemma~\ref{lem.main1a}
tells us that the conditions of Proposition~\ref{prop.functor5} are satisfied by the non-abelian cohomology functor 
$\mathcal{F} = H^1(\ast, G)$ and thus 
\[ \rd_{k'}(G_{k'}) \leqslant \rd_k(G) \leqslant \{ \rd_{k'}(G_{k'}), \, 1 \} . \]
This yields the desired equality, $\rd_k(G) = \rd_{k'}(G_{k'})$, assuming $\rd_{k'}(G_{k'}) \geqslant 1$. Lemma~\ref{lem.main1b}
shows that this equality also holds when $\rd_{k'}(G_{k'}) = 0$.

\begin{lem} \label{lem.main1a} Let $k$ be algebraically closed field and $G$ be an algebraic group over $k$.
Then the natural map $H^1(E, G) \to H^1(E((t)), G)$ has trivial kernel for every perfect field $E$ containing $k$.
\end{lem}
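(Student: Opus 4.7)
The plan is to reduce to smooth $G$ and then invoke Hensel's lemma for the Henselian DVR $R = E[[t]]$, combined with an injectivity result for the specialization map to the fraction field.

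First, I would reduce to the case where $G$ is smooth. Consider the commutative square
\[ \xymatrix{ H^1(E, G_{\rm red}) \ar[r] \ar[d]^{\iota_1} & H^1(E((t)), G_{\rm red}) \ar[d]^{\iota_2} \\
H^1(E, G) \ar[r] & H^1(E((t)), G), } \]
in which $\iota_1$ is bijective by Lemma~\ref{lem.non-reduced}(b) (using that $E$ is perfect) and $\iota_2$ is injective by Lemma~\ref{lem.non-reduced}(a). A diagram chase shows that triviality of the kernel of the bottom arrow follows from the same property for the top arrow, so we may assume $G$ is smooth.

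Under this assumption, set $R = E[[t]]$ and $K = E((t))$. Since $R$ is Henselian with perfect residue field $E$, Hensel's lemma for torsors under smooth group schemes yields a canonical bijection $H^1(R, G_R) \xrightarrow{\sim} H^1(E, G)$. The map in the lemma then factors as $H^1(E, G) \cong H^1(R, G_R) \to H^1(K, G_K)$, and it suffices to show the second arrow has trivial kernel, i.e., that every $G_R$-torsor trivialized over $K$ is already trivial over $R$.

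This last step is the main obstacle, and I would attack it via Galois cohomology. Set $\Gamma = \Gal(\bar E / E)$ and $\Gamma_K = \Gal(K^{\rm sep}/K)$; the natural surjection $\pi \colon \Gamma_K \twoheadrightarrow \Gamma$ has kernel the inertia subgroup $I$, whose fixed field is the maximal unramified extension $E^{\rm un} = \bigcup_{E'/E \text{ finite}} E'((t))$. A cocycle $c \in Z^1(\Gamma, G(\bar E))$ whose inflation to $\Gamma_K$ is a coboundary $c_{\pi(\sigma)} = g^{-1}\sigma(g)$ with $g \in G(K^{\rm sep})$ forces $g$ to be $I$-fixed (since $c \circ \pi$ vanishes on $I$), hence $g \in G(E^{\rm un})$, and in fact $g \in G(E'((t)))$ for some finite Galois $E'/E$. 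In the illustrative case $G = \bbG_m$, one writes $g = t^n u(t)$ with $u(t) \in E'[[t]]^{\times}$; the factor $t^n$ cancels in $g^{-1}\sigma(g) = u(t)^{-1}\sigma(u(t))$ because $\sigma$ fixes $t$, and the reduction $u(0) \in E'^{\times} \subset \bar E^{\times}$ furnishes the desired cobounding element. For general smooth $G$, one argues analogously by passing to the torsor $T = {}_c G$, which is smooth with an $E'((t))$-point, and invoking the Henselian case of the Grothendieck--Serre conjecture (standard for smooth affine $G$, and available via the N\'eron model for abelian varieties) to extend this to an $E'[[t]]$-point, whose reduction modulo $t$ supplies the required cobounding element in $G(E') \subset G(\bar E)$.
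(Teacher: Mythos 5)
Your reduction to smooth $G$ is exactly the paper's, and your Galois-cohomological setup, after unwinding, reduces the problem to the same geometric statement the paper faces: the twisted torsor $T={}_cG$ is a $G$-torsor over $E$ with a point over $E'((t))$, and one must produce a point of $T$ over $E'[[t]]$ (equivalently extend the section of $T_{E'[[t]]}\to\Spec(E'[[t]])$ from the generic point to the closed point) so that reduction mod $t$ yields the cobounding element. The gap is precisely at this step. Your parenthetical covers two cases --- smooth affine $G$ via ``the Henselian case of the Grothendieck--Serre conjecture'' and abelian varieties via N\'eron models --- but a general smooth algebraic group over $k$ is neither: by Chevalley's theorem it is an extension of an abelian variety by a connected affine group, so $T$ is neither affine nor proper, and neither citation applies to it. Moreover, even in the affine case the appeal is too quick: Grothendieck--Serre over a (Henselian) DVR is Nisnevich's theorem for \emph{reductive} group schemes; for an arbitrary smooth affine group (nontrivial unipotent radical, nontrivial component group) one still needs a d\'evissage, which you have not supplied. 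As written, the argument for general smooth $G$ is essentially a restatement of the lemma itself.

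The paper closes exactly this gap with a uniform geometric device: let $N=G^0_{\rm aff}$ be the affine part from Chevalley's theorem and $B$ a Borel subgroup of $N$; then $G/B$ is proper over $G/N$, which is proper over $\Spec(k)$, so $T/B$ is proper over $\Spec(E)$ and the valuative criterion extends the composite section $\Spec(E((t)))\to (T/B)_{E[[t]]}$ across the closed point. The resulting $E$-point of $T/B$ has preimage in $T$ a torsor under the connected solvable (hence special) group $B$, which therefore has an $E$-point. If you want to salvage your approach, you would need to carry out this kind of reduction (or an equivalent one) rather than cite results that only cover the reductive-affine and proper extremes.
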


\begin{proof}
We begin by reducing the problem to the case where $G$ is smooth. Indeed, let $G_{\rm red}$ be the associated smooth group.
Consider the following diagram
\[  \xymatrix{  1 \ar@{->}[r] & H^1(E, G_{\rm red}) \ar@{->}[d] \ar@{->}[r] & \ar@{->}[d] H^1(E, G)   \ar@{->}[r] & 1 \\
 1 \ar@{->}[r] & H^1(E((t)), G_{\rm red})  \ar@{->}[r] & H^1(E((t)), G),  & } \]
where the bottom row is exact by Lemma~\ref{lem.non-reduced}(a) and the top row is exact by Lemma~\ref{lem.non-reduced}(b).
(Recall that we are assuming $E$ to be perfect.) An easy diagram chase shows that if the left vertical map has trivial kernel,
then so does the right vertical map. In other words, if the lemma holds for $G_{\rm red}$, then it also holds for $G$. From now on we will assume that $G$ is smooth.

Now suppose $\alpha \in H^1(E, G)$ lies in the kernel of the map $H^1(E, G) \to H^1(E((t)), G)$. This means that the $G$-torsor
$\pi \colon T \to \Spec(E)$ representing $\alpha$ splits over $\Spec\big( E((t)) \big)$. In other words,  
$\pi_{E[[t]]} \colon T \times \Spec(E[[t]])
\to \Spec(E[[t]])$ has a section $s \colon \Spec(E((t))) \to T \times \Spec(E((t)))$ over the generic point of $\Spec(E[[t]])$.
We would like to show that this section extends to all of $\Spec(E[[t]])$. If we were able to do this, then restricting to the closed point
of $\Spec(E[[t]])$ (i.e., setting $t = 0$), would yield an $E$-point on $T$, showing that $T$ is split, i.e., $\alpha = [T] = 1$
in $H^1(E, G)$, as desired.
To show that $s$ can be extended to all of $\Spec(E[[t]])$, it is natural to appeal to the valuative criterion for properness.
If $G$ is proper over $\Spec(k)$ (i.e., the identity component of $G$ is an abelian variety), then $T$ is proper over $\Spec(K)$,
a desired lifting exists by the valuative criterion for properness, and the proof is complete.
In general, $T$ is not proper over $\Spec(K)$, so the valuative criterion for properness 
does not apply. Nevertheless, we will now show that a variant of this argument still goes through, if we modify $T$ slightly 
as follows.
 
By Chevalley's Structure Theorem~\cite{chevalley, Con} there exists a unique connected smooth normal affine $k$-subgroup 
$N= G^0_{\rm aff}$ of $G^0$ such that the quotient $G^0/N$ is an abelian variety. Since $G$ is smooth and $k$ is 
an algebraically closed field, $N$ is smooth, connected and normal in $G$, and $G/N$ is proper over $\Spec(k)$; 
see~\cite[Theorem 4.2 and Remark 4.3]{brion15}.

Let $B$ be a Borel subgroup (i.e., a maximal connected solvable subgroup) of $N$. Then the homogeneous space 
$N/B$ is proper over $\Spec(k)$. Consequently $G/B$ is proper over $G/N$. Since $G/N$ is proper over $\Spec(k)$, we conclude that
$G/B$ is proper over $\Spec(k)$ and hence, $T/B$ is proper over $\Spec(K)$.
Now the valuative criterion for properness tells us that the section 
\[ s \colon \Spec \big( E((t)) \big) \to T_{\Spec(E[[t]])} \to (T/B)_{\Spec(E[[t]])} \]
extends to $\Spec (E[[t]])$. Restricting to the closed point of $\Spec(E[[t]])$, we obtain an $E$-point on $T/B$.
Denote this point by $p \colon \Spec(E) \to T/B$.
The preimage of this $E$-point under the natural map $T \to T/B$ is a $B$-torsor over $\Spec(E)$.
Since $B$ is connected and solvable, it is special; see Lemma~\ref{lem.group1}(b). Thus this $B$-torsor is split, i.e., has an $E$-point.
This shows that $T$ has an $E$-point, i.e., $T$ is split over $E$, as desired.
\end{proof}

\begin{lem} \label{lem.main1b} Let $k \subset k'$ be algebraically closed fields and $G$ be an algebraic group over $k$.
Then the following conditions are equivalent.

\smallskip
(a) $\rd_{k'}(G) = 0$, 

\smallskip
(b) $G_{k'}$ is a special group,

\smallskip
(c) $G_{k'}$ is affine, and $\ed_{k'}(G_{k'}) = 0$,

\smallskip
(d) $G$ is affine, and $\ed_k(G) = 0$,

\smallskip
(e) $G$ is a special group,

\smallskip
(f) $\rd_{k}(G) = 0$.
\end{lem}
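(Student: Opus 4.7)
My plan is to split the six conditions into the $k'$-cluster $(\mathrm{a})$--$(\mathrm{c})$ and the $k$-cluster $(\mathrm{d})$--$(\mathrm{f})$, establish the within-cluster equivalences routinely, and then bridge the two clusters by proving $(\mathrm{b})\Rightarrow(\mathrm{e})$. The equivalences $(\mathrm{a})\iff(\mathrm{b})$ and $(\mathrm{e})\iff(\mathrm{f})$ are immediate from Lemma~\ref{lem.group1}(a) over the algebraically closed bases $k'$ and $k$. For $(\mathrm{b})\iff(\mathrm{c})$ (and symmetrically $(\mathrm{d})\iff(\mathrm{e})$), the forward direction uses the classical fact that special groups are affine; for the converse, if $G_{k'}$ is affine with $\ed_{k'}(G_{k'})=0$ then every $G_{k'}$-torsor descends to $\Spec(k')$ and is trivial there, because $k'$ is perfect (so Lemma~\ref{lem.non-reduced}(b) reduces to the smooth case) and \'etale cohomology over a separably closed field vanishes. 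Finally $(\mathrm{e})\Rightarrow(\mathrm{b})$ is immediate: any $K/k'$ is also a $k$-field, so $H^1(K,G_{k'})=H^1(K,G)=1$.

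The heart of the lemma is the converse bridge $(\mathrm{b})\Rightarrow(\mathrm{e})$. Assume $G_{k'}$ is special; since special groups over an algebraically closed base are smooth, connected, and affine (classical), faithfully flat descent shows $G$ is too. Fix $K/k$ and $\alpha\in H^1(K,G)$; the goal is $\alpha=1$. Because $G_{k'}$ is special we have $\alpha_{Kk'}=1$, and because $H^1(\ast,G)$ respects direct limits we can find a finitely generated subextension $k\subset k_1\subset k'$ for which $\alpha_{Kk_1}=1$, with the compositum taken inside $Kk'$. View $k_1$ as the function field of a geometrically integral variety $X/k$ of dimension $d=\trdeg_k(k_1)$ and pick a smooth $k$-rational point $p\in X(k)$, available because $k$ is algebraically closed. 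Cohen's structure theorem then gives $\widehat{\mathcal{O}_{X,p}}\simeq k[[t_1,\ldots,t_d]]$, and iterating the standard embedding $\operatorname{Frac}(R[[t]])\hookrightarrow\operatorname{Frac}(R)((t))$ yields $k_1\hookrightarrow k((t_1))\cdots((t_d))\hookrightarrow K((t_1))\cdots((t_d))$. Since $k$ is algebraically closed $X_K$ is integral, so the compositum of $K$ and $k_1$ over $k$ is unique up to $K$-isomorphism; consequently the compositum realized inside $K((t_1))\cdots((t_d))$ is $K$-isomorphic to $Kk_1\subset Kk'$, and hence $\alpha_{K((t_1))\cdots((t_d))}=1$.

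It remains to descend $d$ steps back down to $K$ by iterated application of Lemma~\ref{lem.main1a}. The proof of that lemma requires $E$ to be perfect only in order to reduce from $G$ to $G_{\rm red}$ via Lemma~\ref{lem.non-reduced}(b); since we have already arranged $G$ to be smooth, that reduction is unnecessary, and the Chevalley-plus-valuative-criterion argument goes through for any $E\supset k$. Applying this strengthened form at $E_i=K((t_1))\cdots((t_i))$ for $i=d-1,d-2,\ldots,0$ gives $\alpha_{E_{d-1}}=\cdots=\alpha_K=1$, completing the proof. The main obstacle is the correct matching of the two composita---the one inside $Kk'$ where we know triviality a priori and the one we construct via Cohen's theorem inside iterated Laurent series---together with recognizing that Lemma~\ref{lem.main1a} extends to non-perfect $E$ when $G$ is already smooth.
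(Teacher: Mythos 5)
Your within-cluster equivalences are fine, your bridge $(\mathrm{e})\Rightarrow(\mathrm{b})$ is indeed trivial, and your proof of the key direction $(\mathrm{b})\Rightarrow(\mathrm{e})$ takes a genuinely different route from the paper. The paper crosses between the two base fields through essential dimension: it proves $(\mathrm{c})\Leftrightarrow(\mathrm{d})$ by citing the invariance of the essential dimension of an affine group under extension of algebraically closed base fields, and $(\mathrm{d})\Rightarrow(\mathrm{e})$ by citing Merkurjev's result that an affine group of essential dimension $0$ is special. You instead specialize a trivialization from a finitely generated piece of $k'$ down to $k$ via iterated Laurent series and Lemma~\ref{lem.main1a}; this is self-contained modulo that lemma and closer in spirit to the proof of Proposition~\ref{prop.functor5}. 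Your observation that the perfectness hypothesis in Lemma~\ref{lem.main1a} is used only to reduce to the smooth case, so that the lemma holds for arbitrary $E\supseteq k$ once $G$ is smooth, is a correct reading of that proof and is exactly the tool needed for the iterated descent.

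There is, however, one genuine gap: the claim that ``the compositum of $K$ and $k_1$ over $k$ is unique up to $K$-isomorphism'' because $X_K$ is integral is false. Integrality of $K\otimes_k k_1$ only guarantees that the \emph{free} compositum $\operatorname{Frac}(K\otimes_k k_1)$ is a field; other composita are fraction fields of proper quotients of $K\otimes_k k_1$ and need not be $K$-isomorphic to it. (Take $K=k(x)$, $k_1=k(y)$: the compositum inside $k(x,y)$ has transcendence degree $2$ over $k$, while the one inside $k(x)$ obtained by $y\mapsto x$ has transcendence degree $1$.) So knowing $\alpha_{Kk_1}=1$ for the compositum inside $Kk'$ does not by itself give triviality over the compositum you build inside $K((t_1))\cdots((t_d))$. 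The step is repairable: fix $Kk':=\operatorname{Frac}(K\otimes_k k')$ from the outset (a field, since $k$ is algebraically closed), so that the finitely generated piece produced by the direct-limit argument is the free compositum $\operatorname{Frac}(K\otimes_k k_1)$; then check that the compositum of $K$ with the Cohen-embedded copy of $k_1$ inside $K((t_1))\cdots((t_d))$ is also free. The latter holds because, choosing the $t_i$ to be a regular system of parameters in $\mathcal{O}_{X,p}$, the field $k_1$ is algebraic over $k(t_1,\ldots,t_d)$ and $t_1,\ldots,t_d$ remain algebraically independent over $K$ in $K((t_1))\cdots((t_d))$; hence that compositum has transcendence degree $d$ over $K$, which forces the kernel of $K\otimes_k k_1\to K((t_1))\cdots((t_d))$ to vanish. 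One further small point: ``special implies smooth'' is automatic in characteristic $0$ but needs an argument in characteristic $p$ (for instance, $GL_n\to GL_n/G$ is a $G$-torsor with smooth total space and smooth base, and specialness makes it generically trivial, so $G$ is smooth); without smoothness of $G$ your strengthened Lemma~\ref{lem.main1a} would not apply over the imperfect fields $K((t_1))\cdots((t_i))$.
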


\begin{proof} (a) $\Longleftrightarrow$ (b) and (e) $\Longleftrightarrow$ (f) by Lemma~\ref{lem.group1}(a). 

\smallskip
(b) $\Longrightarrow$ (c): Suppose $G_{k'}$ is special, i.e., $H^1(\ast, G_{k'})$ is the trivial functor. Then clearly 
$\ed_{k'}(G_{k'}) = \ed_{k'} (H^1(\ast, G_{k'})) = 0$. Moreover, a special group is affine; see~\cite[Theorem 4.1]{serre-special}.

\smallskip
(c) $\Longleftrightarrow$ (d): $G$ is affine if and only if $G_{k'}$ is affine. Moreover, since $k$ is algebraically closed
and $G$ is affine group over $k$, we have
$\ed_k(G) = \ed_{k'}(G_{k'})$; see~\cite[Proposition 2.14]{brosnan2007essential} or~\cite[Example 4.10]{tossici2017essential}.

\smallskip
(d) $\Longrightarrow$ (e) by \cite[Proposition 3.16]{merkurjev-survey}.

\smallskip
(f) $\Longrightarrow$ (a): By Proposition~\ref{prop.functor2}(a) with $\mathcal{F} = H^1(\ast, G)$, $\rd_k(G) \geqslant \rd_{k'}(G_{k'})$.
In particular, if $\rd_k(G) = 0$, then $\rd_{k'}(G_{k'}) = 0$.
\end{proof}

\section{Proof of Theorem~\ref{thm.main2}}
\label{sect.proof-main2}

Our proof of Theorem~\ref{thm.main2} will rely on the following proposition.

\begin{prop} \label{prop.dvr} Let $D$ be a discrete valuation ring with fraction field $k$ and residue field $k_0$. 
Let $G$ be a smooth affine group scheme over $D$. Assume that the connected component $G^0$ is reductive 
and the component group $G/G^0$ is finite over $D$. If $\Char(k) = 0$ and $\Char(k_0) = p > 0$, assume further that the absolute 
ramification index of $D$ is $1$. Then
	\[ \rd_{k_0}(G_{k_0}) \leqslant \max \{ \rd_{k}(G_k), \, 1 \}. \]
\end{prop}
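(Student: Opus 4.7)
The strategy mirrors the proof of Lemma \ref{lem.main1a}. Given an arbitrary torsor class $\alpha \in H^1(K_0, G_{k_0})$ with $K_0 \in \Fields_{k_0}$, the plan is to lift $\alpha$ to a $G$-torsor $T \to \Spec(\mathcal{O})$ on a suitable complete DVR $\mathcal{O}$ whose residue field is $K_0$ and whose fraction field $K$ contains $k$; then use the hypothesis $\rd_k(G_k) \leqslant d$ to trivialize the generic fiber $T_K$ over some finite extension $L/K$ with $\lev_k(L/K) \leqslant d$; transfer this triviality to the residue field $L_\nu$ by a Borel-reduction argument; and finally apply Proposition \ref{prop.lev-valuation} to bound $\lev_{k_0}(L_\nu/K_0)$ by $\max\{d,1\}$.

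Set $d = \rd_k(G_k)$. Since $G$ is of finite presentation over $D$, the functor $H^1(\ast, G_{k_0})$ respects direct limits, so Proposition \ref{prop.level-d}(c) reduces us to the case $K_0 = K_0^{(1)}$; by Corollary \ref{cor.level-1}, this $K_0$ is perfect. The absolute-ramification hypothesis now allows the construction of a complete DVR $\mathcal{O}$ equipped with a local $D$-algebra structure inducing the identity on $k_0 \hookrightarrow K_0$: in equal characteristic one takes a power series ring over $K_0$, while in mixed characteristic the Cohen/Witt ring $W(K_0)$ receives $D \subset W(k_0) \subset W(K_0)$ precisely because the ramification index of $D$ is $1$. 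Smoothness of $G$ over $D$ and Henselianness of $\mathcal{O}$ produce a bijection $H^1(\mathcal{O}, G) \xrightarrow{\sim} H^1(K_0, G_{k_0})$, so $\alpha$ lifts to a $G$-torsor $T \to \Spec(\mathcal{O})$ whose generic fiber is a $G_K$-torsor $T_K$.

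Choose a finite extension $L/K$ with $\lev_k(L/K) \leqslant d$ trivializing $T_K$. Completeness of $K$ forces $\nu$ to extend uniquely to $L$; denote the resulting complete valuation ring by $\mathcal{O}_L$ and its residue field by $L_\nu$. I claim $L_\nu$ splits $\alpha$. Following the argument in the proof of Lemma \ref{lem.main1a}: after a finite unramified extension of $\mathcal{O}_L$ (which does not affect levels), a Borel subgroup $B \subset G^0$ is defined over $\mathcal{O}_L$. Since $G^0/B$ is projective and $G/G^0$ is finite, the quotient $T/B$ is proper over $\Spec(\mathcal{O}_L)$. The $L$-point coming from the trivialization of $T_L$ projects to an $L$-point of $T/B$, which by the valuative criterion for properness extends to an $\mathcal{O}_L$-point; reduction modulo the uniformizer gives an $L_\nu$-point of $(T/B)_{L_\nu}$, representing a $B_{L_\nu}$-torsor inside $T_{L_\nu}$. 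By Lemma \ref{lem.group1}(b), this residual torsor under the connected solvable group $B_{L_\nu}$ is split, so $T_{L_\nu}$ has an $L_\nu$-point and is trivial.

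With $L_\nu$ splitting $\alpha$ in hand, Proposition \ref{prop.lev-valuation} (applicable because we arranged $K_0$ to be perfect) yields $\lev_{k_0}(L_\nu/K_0) \leqslant \max\{\lev_k(L/K), 1\} \leqslant \max\{d,1\}$. Hence $\rd_{k_0}(\alpha) \leqslant \max\{d,1\}$, and taking the supremum over $(K_0, \alpha)$ completes the proof. The main obstacle is the third paragraph: transferring triviality from the generic to the special fiber of $T$ is subtle because $T$ is only affine, not proper; the Borel reduction circumvents this, but one must carefully track the non-trivial component group $G/G^0$ and the fact that Borel subgroups of a (possibly non-split) reductive $G^0$ may only exist étale-locally over $\mathcal{O}_L$, so an auxiliary unramified extension is needed.
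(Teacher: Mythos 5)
Your overall skeleton matches the paper's: complete $D$, lift the torsor over $W(K_0)$ by smoothness and Henselianness, split the generic fiber over some $L/K$ with $\lev_k(L/K)\leqslant d$, push the splitting down to the residue field, and finish with Proposition~\ref{prop.lev-valuation}. Your preliminary reduction to $K_0$ closed at level $1$ (hence perfect, by Corollary~\ref{cor.level-1}) is a sensible way to guarantee that Proposition~\ref{prop.lev-valuation} applies. The genuine gap is in your third paragraph, where you transfer triviality from $T_L$ to $T_{L_\nu}$ by the Borel-reduction trick of Lemma~\ref{lem.main1a}. That trick requires a Borel subgroup $B\subset G^0$ defined over the valuation ring $\mathcal{O}_L$ in order to form the proper quotient $T/B$, and Proposition~\ref{prop.dvr} only assumes that $G^0$ is reductive over $D$ --- not split or quasi-split. (In Lemma~\ref{lem.main1a} the group lives over an algebraically closed field, so a Borel exists; that is exactly what fails here.)

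Your proposed patch --- pass to a finite \'etale extension of $\mathcal{O}_L$ over which a Borel exists, ``which does not affect levels'' --- does not work. A nontrivial finite \'etale extension of $\mathcal{O}_L$ induces a nontrivial finite separable extension $L_\nu'/L_\nu$ of residue fields, so the argument only produces an $L_\nu'$-point of $T$. By Lemma~\ref{lem.lev4} you would then need to bound $\lev_{k_0}(L_\nu'/L_\nu)$ by $\max\{d,1\}$, i.e., to bound the level of an extension over which the reductive group $G^0_{L_\nu}$ acquires a Borel; no such bound is available in general (for anisotropic groups of type $E_8$ this is essentially the open question discussed in the paper), and one cannot descend triviality of a torsor from $L_\nu'$ back to $L_\nu$. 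The paper's proof avoids Borel subgroups altogether at this step: it invokes the injectivity of $H^1(S,G_S)\to H^1(L,G_L)$ for the complete discrete valuation ring $S=\mathcal{O}_L$, a case of the Grothendieck--Serre conjecture going back to Nisnevich, in the form of \cite[Lemma 3.3(b)]{specialization1}; this is precisely where the hypotheses that $G^0$ is reductive and $G/G^0$ is finite over $D$ are used. As written, your argument is complete only when $G^0$ admits a Borel subgroup over $D$; in general the third paragraph must be replaced by an appeal to this Grothendieck--Serre-type injectivity.
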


Recall that the absolute ramification index of $D$ is defined as $\nu(p)$, where $\nu \colon k^* \to \bbZ$ is the discrete valuation.

\begin{proof} First observe that we may replace $D$ by its completion $\widehat{D}$. Indeed, denote the fraction field of $\widehat{D}$ by
$\widehat{k}$. Then $k \subset \widehat{k}$, and the residue field $k_0$ remains unchanged. By Proposition~\ref{prop.functor2}(a),
$\rd_{\widehat{k}}(G_{\widehat{k}}) \leqslant \rd_k(G_k)$. Thus it suffices to show that  $\rd_{k_0}(G_{k_0}) \leqslant \max \{ \rd_{\widehat{k}}(G_{\widehat{k}}), \, 1 \}$.

After replacing $D$ by $\widehat{D}$, we may assume that $D$ is complete. Under the assumptions of the proposition, $D = W(k_0)$; see~\cite[Sections II.4-5]{serre-lf}.
Here for any field $K_0$ containing $k_0$ we define $W(K_0)$ to be the ring of power series $K_0[[t]]$ if $\Char(k) = \Char(k_0)$ and 
the ring of Witt vectors with coefficients in $K_0$ if $\Char(k) = 0$ and $\Char(k_0) = p > 0$; see~\cite[Section II.6]{serre-lf}.

Now let $K_0$ be a field containing $k_0$ and $\pi \colon T_0 \to \Spec(K_0)$ be a $G_{k_0}$-torsor. 
Set $R = W(K_0)$. Recall that $R$ is a complete local ring relative to a valuation 
$\nu \colon R \to \bbZ$ with residue field $K_0$, extending the valuation on $D$.
By~\cite[XXIV, Proposition 8.1]{SGA3III}, the natural map
$H^1(R, G) \to H^1(K_0, G)$ is bijective. In particular, there exists a $G_R$-torsor $\pi_R \colon T \to \Spec(R)$ 
which restricts to $\pi$ over the closed point $\Spec(K_0) \to \Spec(R)$. Let $K$ be the field of fractions of $R = W(K_0)$
and $\pi_K \colon T_K \to \Spec(K)$ be the restriction of $\pi$ to the generic point of $\Spec(R)$. Our goal is to show that
	\begin{equation} \label{e.dvr} \rd_{l}(T_{K_0}) \leqslant \max \{ \rd_{k}(T_K), \, 1 \}. \end{equation}
This inequality tells us that $\rd_{k_0}(T_{K_0}) \leqslant \max \{ \rd_{k}(G_k), \, 1 \}$. Taking the maximum of 
the left hand side over all $K_0 \in \Fields_{k_0}$ and all $G_{k_0}$-torsors $T_0 \to \Spec(K_0)$, we arrive at $\rd_{k_0}(G_{k_0}) \leqslant \max \{ \rd_{k}(G_k), \, 1 \}$,
as desired.

It remains to prove the inequality~\eqref{e.dvr}.
%
By the definition of $d = \rd_{k}(T_{K})$ there exists a finite field extension $L/K$ such that $L$ splits $T_K$ and
$\lev_{k}(L/K) \leqslant d$. The valuation $\nu$ extends from $K$ and to $L$. Once again, by abuse of notation I will continue 
to denote this extended valuation by $\nu \colon L^* \to \bbZ$. I will also denote the valuation ring for this valuation by $S$ and
the residue field by $L_0$. Now consider the diagram of natural morphisms
\[  \xymatrix{  T_S \ar@{->}[dd] \ar@{->}[rr]  &     &   T_{L} \\ 
                                     & H^1(S, G_S)  \ar@{^{(}->}[r] \ar@{->}[d] &  H^1(L, G_{L})   \\
                T_{L_{0}}         & H^1(L_{0}, G_{L_{0}})   &   } \]
The horizontal map is injective by~\cite[Lemma 3.3(b)]{specialization1}.~\footnote{
\cite[Lemma 3.3(b)]{specialization1} is a variant of the Grothendieck-Serre Conjecture over a Henselian discrete valuation ring. 
A theorem of Nisnevich, establishing the Grothendick-Serre Conjecture in this context, is a key ingredient in our proof of both  Proposition~\ref{prop.dvr} and Theorem~\ref{thm.main2}.}
Note that the assumptions of Theorem~\ref{thm.main2}, that $G^0$ is reductive and $G/G^0$ is finite over $D$ 
(and hence, over $R$ and over $S$), are used to ensure that~\cite[Lemma 3.3(b)]{specialization1} applies.

Since $T_K$ splits over $\Spec(L)$, the injectivity of the horizontal map in the above diagram tells us that that $T$ splits over $\Spec(S)$.
Consequently, $T_{K_0}$ splits over $\Spec(L_0)$. This tells us that
\[ \rd_k(T_{K_{0}}) \leqslant \lev_{k_0}(L_{0}/K_{0}) \leqslant \max \{ \lev_{k} (L/K), \, 1 \} \leqslant
\max \{ d, \, 1 \}, \]
where the middle inequality is given by Proposition~\ref{prop.lev-valuation}. This completes the proof of the inequality~\eqref{e.dvr} and thus
of Proposition~\ref{prop.dvr}.
\end{proof} 

We now proceed with the proof of Theorem~\ref{thm.main2}. If $\Char(k) = \Char(k_0)$, then 
by Theorem~\ref{thm.main1}, $\ed_{k}(G_k) = \ed_{F}(G_F) = \ed_{k_0}(G_{k_0})$, where $F$ is the prime field.
Thus we may assume without loss of generality that $\Char(k) = 0$ and $\Char(k_0) = p > 0$. 
Moreover, we are free to replace $k$ by any field of characteristic $0$ 
and $k_0$ by any field of characteristic $p$. 
%

If $\rd_{k}(G_{k}) \geqslant 1$ for some (and thus every) field of characteristic $0$,
then the desired inequality $\rd_{k_0}(G_{k_0}) \leqslant \rd_k(G_k)$ readily follows from Proposition~\ref{prop.dvr}, applied to
the group scheme $G_D$, where $D = W(k_0)$ = the ring of Witt vectors with coefficients in $k_0$.

The case, where $\rd_{k}(G_{k}) = 0$ needs to be treated separately (as in the previous section).
One again, we are free to choose $k$ to be any field of characteristic $0$ and $k_0$ to be any field of characteristic $p$.
In particular, we may assume that both $k$ and $k_0$ are algebraically closed.
Now by Lemma~\ref{lem.group1}(a), it suffices to show that if $G_{k}$ is special, then $G_{k_0}$ is special. 

Indeed, if $G_{k}$ is special, then $G_{k}$ is connected~\cite[Theorem 4.1]{serre-special}. Hence, $G$ and $G_{k_0}$ are also connected. 
On the other hand, since $k$ and $k_0$ are algebraically closed, we may appeal to the classification of special groups over an algebraically 
closed field due to Grothendieck~\cite[Theorem 3]{grothendieck-special}. According to this classification, 
$G_{k}$ is special if and only if its derived subgroup is a direct product $G_1 \times \ldots \times G_r$, 
where each $G_i$ is a simply connected simple group of type A or C. This property is encoded into the root datum, 
which is the same for $G_{k}$, $G$, and $G_{k_0}$; see~\cite[XXV, Section 1]{SGA3III}. (Note that this is the only step in the proof of Theorem~\ref{thm.main2}
which uses the assumption that $G^0$ is split.) We conclude that $G_k$ is special if and only if $G_{k_0}$ is special. This finishes the proof of 
Theorem~\ref{thm.main2}. 
\qed

\section{Upper bounds on the resolvent degree of a group}
\label{sect.upper-bounds}

Consider an action of a linear algebraic group $G$ on an algebraic variety $X$ (not necessarily connected) defined over a field $k$.
We will say that this action is generically free if there exists a dense $G$-invariant open subset $U \subset X$ such that the scheme-theoretic stabilizer $G_u$ is trivial for every geometric point $u \in U$. In this section we will prove the following.

\begin{prop} \label{prop.hypersurface}
Let $G$ be a closed subgroup of $\PGL_{n}$ defined over $k$. Suppose there exists
a $G$-invariant closed subvariety $X$ of $\bbP^n$ of degree $a$ and dimension $b$.

\smallskip
(a) (cf.~\cite[Proposition 4.11]{wolfson}) If the $G$-action on $X$ is generically free, then \[ \rd_k(G) \leqslant \max \{ b - \dim(G), \,  \rd_k(\Sym_a), \, 1 \}, \] 
where $\Sym_a$ denotes the symmetric group on $a$ letters.

\smallskip
(b) Suppose $\Char(k) \neq 2$ and there exists a $G$-invariant quadric hypersurface $Q \subset \bbP(V)$ of rank $r$
such that $\dim(Q \cap X) = b - 1$.  Assume further the $G$-action on $Q \cap X$ is generically free,
and $b \geqslant \lfloor \dfrac{r+1}{2} \rfloor$.  
Then \[ \rd_k(G) \leqslant \max \{ b - 1 - \dim(G), \, \rd_{k}(\Sym_a), \, 1  \}. \]
\end{prop}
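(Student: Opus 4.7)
The plan is to prove part~(b) by a variant of the argument for part~(a), with a single key substitution: the Grassmannian of $(n-b)$-dimensional linear subspaces of $\bbP^n$ is replaced by the Fano variety $F = F(Q, n-b)$ of projective $(n-b)$-dimensional linear subspaces \emph{contained in}~$Q$. The argument for part~(a), due to Wolfson, combines three ingredients: (i)~the generically free $G$-action on $X$ yields a versal $G$-torsor over $k(X/G)$, supplying the transcendence-degree term $b - \dim(G)$; (ii)~intersecting $X$ with a generic codimension-$b$ linear subspace of $\bbP^n$ produces, by Bezout, a $0$-dimensional subscheme of degree $a$, whose splitting as an \'etale algebra costs level $\leqslant \rd_k(\Sym_a)$; and (iii)~Lemma~\ref{lem.lev4} assembles a tower of extensions realizing these bounds, with a final descent step accounting for the ``$1$''.

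For part~(b), the plan is to run the same script with $X$ replaced throughout by $Q \cap X$, which is $G$-invariant, of dimension $b-1$, and carries a generically free $G$-action by hypothesis. The base field of the versal torsor becomes $k((Q \cap X)/G)$, of transcendence degree $b - 1 - \dim(G)$. Naively applying step~(ii) to $Q \cap X$ would intersect it with a generic linear subspace of codimension $b - 1$ in $\bbP^n$, yielding degree $\deg(Q \cap X) = 2a$ and the useless bound $\rd_k(\Sym_{2a})$. The whole point of the quadric hypothesis is to recover $\rd_k(\Sym_a)$: one chooses $\Lambda \in F$ (so $\Lambda \subset Q$) of projective dimension $n - b$, and observes that $\Lambda \cap (Q \cap X) = \Lambda \cap X$ (since $\Lambda \cap Q = \Lambda$ scheme-theoretically), which by proper intersection has degree $\deg(\Lambda) \cdot \deg(X) = a$.

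The one genuinely new geometric input is that $F$ is nonempty. A quadric of rank $r$ in $\bbP^n$ is a cone with vertex of projective dimension $n - r$ over a smooth quadric in $\bbP^{r-1}$, whose maximal isotropic projective linear subspaces have dimension $\lfloor r/2 \rfloor - 1$. Joining vertex and isotropic subspace, the maximal projective linear subspace of $Q$ has dimension $n - \lceil r/2 \rceil$, so $F(Q, n - b)$ is nonempty if and only if $n - b \leqslant n - \lceil r/2 \rceil$, i.e., $b \geqslant \lceil r/2 \rceil = \lfloor (r+1)/2 \rfloor$, which is precisely the hypothesis.

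With $F$ nonempty, the rest is dimension bookkeeping. One forms the incidence variety $\mathcal{I} = \{(\Lambda, x) \in F \times X : x \in \Lambda\}$ with its diagonal $G$-action; the projection $\mathcal{I} \to F$ is generically finite of degree $a$ (producing the \'etale algebra), and the projection $\mathcal{I} \to Q \cap X$ is dominant (so the versal torsor for $G$ on $Q \cap X$ pulls back to a versal torsor on $\mathcal{I}$). Chaining the splitting of the \'etale algebra over $k(F)$ with the versal-torsor reduction over $k((Q \cap X)/G)$ and applying Lemma~\ref{lem.lev4} yields the claimed bound $\max\{b - 1 - \dim(G),\, \rd_k(\Sym_a),\, 1\}$. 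The principal obstacle is verifying that a generic $\Lambda \in F$ cuts $X$ transversely in $a$ distinct geometric points lying in the free locus of the $G$-action on $Q \cap X$, and that the induced rational map from $\mathcal{I}/G$ to $(Q \cap X)/G$ is dominant; both should follow from generic freeness of the $G$-action together with Kleiman-type transversality, but this is the only place where a concrete geometric verification is required.
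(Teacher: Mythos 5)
Your geometric reading of part (b) is exactly right: the role of the hypothesis $b \geqslant \lfloor (r+1)/2 \rfloor$ is to guarantee that $Q$ contains projective linear subspaces $\Lambda$ of dimension $n-b$ (your count $n - \lceil r/2\rceil$ for the maximal linear subspace of a rank-$r$ quadric cone matches the paper's), and for such $\Lambda$ one has $\Lambda \cap (Q\cap X) = \Lambda\cap X$ of degree $a$ rather than $2a$. This is the same mechanism the paper uses (via the isotropic Grassmannian $\Gr_q(m,n)$ with $m-1 = n - \lceil r/2\rceil$). However, the logical frame you hang this on has a genuine gap. Your step (i) asserts that a generically free $G$-action on $X$ (resp.\ $Q\cap X$) ``yields a versal $G$-torsor over $k(X/G)$.'' That is false in general: generic freeness alone does not give versality (e.g.\ $G$ acting on itself by translation has generic torsor the trivial one). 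Consequently your entire construction lives over $k$ and its function fields, and never touches an arbitrary $G$-torsor $T \to \Spec(K)$ over an arbitrary $K \supset k$ — which is what $\rd_k(G)$, as a supremum over all fields and all torsors, requires you to control. Splitting the \'etale algebra of $\Lambda\cap X$ over $k(F)$ bounds the resolvent degree of one particular torsor, not of $G$.

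The paper's proof supplies precisely this missing bridge, by twisting. Given any $T \to \Spec(K)$ with $K$ closed at level $d = \max\{\rd_k(\Sym_a),1\}$, one forms ${}_T X \subset {}_T\bbP^n$ and ${}_T Q$; since $K$ is solvably closed (Corollary~\ref{cor.level-1}), the Brauer--Severi variety ${}_T\bbP^n$ splits and the twisted quadric splits, so your Fano variety (of the \emph{twisted} quadric) has dense $K$-points. One then needs not a single transversal $\Lambda$ but \emph{density} of $K$-points on ${}_T(Q\cap X)$ — this is Lemma~\ref{lem.upper1}, proved by an inductive hyperplane-slicing argument whose base case is exactly ``degree $\leqslant a$ forms split over $K$'' — so that a $K$-point can be found in the free locus; such a point gives a $G$-equivariant map $T \to U$ and hence $\ed_k(T)\leqslant b-1-\dim G$ (Lemma~\ref{lem.upper3}). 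This also explains the ``$1$'' in the bound (it forces $K$ to be solvably closed), which your ``final descent step'' leaves unexplained, and it replaces the appeal to Kleiman-type transversality, which is not available over the non-closed fields where the argument must run. To repair your write-up, you would either have to prove a versality statement for $(Q\cap X)\to(Q\cap X)/G$ after level-$d$ extensions — which is essentially the twisting argument — or adopt the paper's route directly.
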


\begin{remark} \label{rem.Sym} (1) Note that $\rd_k(\Sym_a) \geqslant 1$ if $a \geqslant 2$. Thus for $a \geqslant 2$,
the conclusions of parts (a) and (b) simplify to $\rd_k(G) \leqslant \max \{ b - \dim(G), \,  \rd_k(\Sym_a) \}$
and $\rd_k(G) \leqslant \max \{ b - \dim(G) - 1, \,  \rd_k(\Sym_a) \}$, respectively.

\smallskip
(2) By the rank $r$ of $Q$ we mean the rank of some (and thus any) quadratic form defining $Q$. 
The maximal value of $r$ is $n + 1$; it is attained when $Q$ is non-singular. In particular, the condition that 
$b \geqslant \lfloor \dfrac{r+1}{2} \rfloor$
is automatically satisfied if $b \geqslant \lfloor \dfrac{n + 2}{2} \rfloor$. If $X$ is a hypersurface, i.e., $b = n - 1$, then it is automatically satisfied whenever $n \geqslant 3$.

\smallskip
(3) Note that by Theorem~\ref{thm.main1}, $\rd_k(\Sym_a) = \rd_{\bbC} (\Sym_a)$ for any field $k$ of characteristic $0$
and $\rd_k(\Sym_a) \leqslant \rd_{\bbC}(\Sym_a)$ for any field $k$ of positive characteristic. Thus $\rd_k(\Sym_a)$ can be replaced by 
$\rd_{\mathbb C} (\Sym_a)$ in the statement of the proposition.
%
\end{remark}

\begin{example} \label{ex.hypersurface}
(1) If we take $X = \bbP^n$ in part (a), we obtain $\rd_k(G) \leqslant n - \dim(G)$. In particular, if an abstract finite group $G$
has an $n$-dimensional faithful projective representation over $k$, then $\rd_k(G) \leqslant n$. (The $G$-action on $\bbP^n$ is automatically generically free in this case.) In particular, since the alternating group $\Alt_5$ acts faithfully on $\bbP^1$, we obtain $\rd_k(\Alt_5) \leqslant 1$.
Also, since $\Alt_6$ and $\Alt_7$ have complex projective representations of dimension $2$ and $3$, respectively, we deduce classical upper 
bounds $\rd_k(\Alt_6) \leqslant 2$ and $\rd_k(\Alt_7) \leqslant 3$; see \cite[Section 3 and 4]{dixmier}, 
\cite[Theorem 5.6]{farb-wolfson}. Note also that $\rd_k(\Sym_n) = \rd_k(\Alt_n)$ for any $n \geqslant 3$; this follows from Proposition~\ref{prop.group2}(a) applied to the exact sequence $1 \to \Alt_n \to \Sym_n \to \bbZ/ 2 \bbZ \to 1$.

(2) More generally, the classical upper bound $\rd_k(\Sym_n) \leqslant n - 4$ for any $n \geqslant 5$ can be deduced from Proposition~\ref{prop.hypersurface}(b) as follows.
Consider the $(n-1)$-dimensional subspace of $k^n$ given by $x_1 + x_2 + \ldots + x_n = 0$. The group $\Sym_n$ acts on this space 
by permuting the coordinates. This yields an embedding $\Sym_n \hookrightarrow \PGL_{n-2}$. The desired inequality now follows from Proposition~\ref{prop.hypersurface}(b), where we take $X$ to be the cubic hypersurface given by $s_3 = 0$ and $Q$ to be the quadric 
$s_2 = 0$; see Remark~\ref{rem.Sym}(2). Here $s_i$ denotes the $i$th elementary symmetric polynomial.
\end{example} 

None of the upper bounds in Example~\ref{ex.hypersurface} are new; the point here is that they can all be deduced from Proposition~\ref{prop.hypersurface} in a uniform way. The remainder of this section will be devoted to proving Proposition~\ref{prop.hypersurface}. We begin with two lemmas.

\begin{lem} \label{lem.upper1} Let $k$ be a field, $K \in \Fields_k$ be closed at level $d$, and
$\emptyset \neq X \subset \bbP^n$ be a projective variety of degree $\leqslant a$ defined over $K$. 
If $d \geqslant \max \{ \rd_k(\Sym_a), \, 1 \}$, then

\smallskip
(a) $K$-points are dense in $X$.

\smallskip
(b) Assume further that $Q \subset \bbP^n$ be a quadric hypersurface of rank $r$ defined over $K$ and
$\dim(X) \geqslant \lfloor \dfrac{r + 1}{2} \rfloor$. Then $K$-points are dense in $X \cap Q$.
\end{lem}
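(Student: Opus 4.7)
For part (a), I would argue by induction on $b = \dim X$, reducing first to the case when $X$ is irreducible (so $\deg X = a' \leqslant a$). In the base case $b = 0$, the reduced zero-dimensional subscheme $X \subset \bbP^n_K$ of degree $a'$ corresponds to an \'etale $K$-algebra of degree $a'$; since $d \geqslant \rd_k(\Sym_a)$ and $K$ is closed at level $d$, Example~\ref{ex.etale2} shows that this algebra splits as a product of copies of $K$, so every point of $X$ is $K$-rational. For the inductive step $b \geqslant 1$, fix a non-empty open $U \subset X$; the hypothesis $d \geqslant 1$ forces $\overline{k} \subset K$ by Remark~\ref{rem.0-closed}, so $K$ is infinite, and one can choose a hyperplane $H$ defined over $K$ with $X \not\subset H$ and $U \cap H \neq \emptyset$. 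Then $X \cap H$ has dimension $b - 1$ and degree at most $a$, and the induction hypothesis furnishes a $K$-point in $U \cap H \subset U$.

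For part (b), I first dispose of the case $X \subset Q$ (where $X \cap Q = X$, so (a) applies), and from now on assume $X \not\subset Q$. Since $K$ is closed at level $d \geqslant 1$, by Corollary~\ref{cor.level-1} it is perfect and solvably closed; in characteristic different from $2$, every element of $K$ is therefore a square, so the quadratic form defining $Q$ decomposes over $K$ as a sum of hyperbolic planes together with at most one anisotropic square. This decomposition produces explicit $K$-defined maximal linear subspaces $L \subset Q$ of dimension $n - \lceil r/2 \rceil$, and the orthogonal Grassmannian $\mathrm{OGr}$ parametrizing such $L$ is a homogeneous space under the $K$-split orthogonal group, hence $K$-rational with dense $K$-points. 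For a sufficiently generic $L \in \mathrm{OGr}(K)$, the intersection $L \cap X$ will have the expected dimension $\dim X - \lceil r/2 \rceil \geqslant 0$ (using $\dim X \geqslant \lfloor (r+1)/2 \rfloor = \lceil r/2 \rceil$), degree at most $a$ by Bezout (since $\deg L = 1$), and will meet any prescribed non-empty open $V \subset X \cap Q$; applying part (a) to $L \cap X \subset L \cong \bbP^{n - \lceil r/2 \rceil}$ then yields a $K$-point in $L \cap X \cap V \subset V$.

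The main obstacle I foresee lies in making the genericity argument of part (b) rigorous: via a Bertini-type analysis of the incidence variety $\{(p, L) : p \in L \cap X \cap V\} \subset X \times \mathrm{OGr}$, one must verify that a generic $L \in \mathrm{OGr}(K)$ gives $L \cap X$ of expected dimension and meets $V$. Extra care is needed when $Q$ is singular (a cone over a non-singular quadric with vertex the radical, of dimension $n - r$), and in characteristic $2$, where diagonalization of quadratic forms fails and the structure of $\mathrm{OGr}$ is more subtle; if Section~\ref{sect.upper-bounds} is assuming characteristic different from $2$ throughout (as suggested by Proposition~\ref{prop.hypersurface}(b)), this last concern is moot.
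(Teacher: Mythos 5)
Your proposal is correct and follows essentially the same route as the paper: part (a) is hyperplane slicing plus induction with the base case discharged by Example~\ref{ex.etale2}, and part (b) cuts $X$ with a $K$-rational maximal isotropic subspace $L$ of the split quadric (of projective dimension $n - \lceil r/2\rceil$, so that $\dim L + \dim X \geqslant n$) and applies part (a) to $L \cap X \subset L \simeq \bbP^{m-1}$. The only differences are organizational — the paper inducts on $n$ rather than on $\dim X$, and it handles the genericity step you flag by working with the incidence variety $I \subset (Q\cap X) \times \Gr_q(m,n)$ (establishing density of $K$-points in $I$ and projecting via the surjection $\pi_1$) instead of choosing a single sufficiently generic $L$; your characteristic-$2$ worry is indeed moot, since Proposition~\ref{prop.hypersurface}(b) assumes $\Char(k) \neq 2$.
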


\begin{proof} 
(a) We argue by induction on $n$. The base case, where $n = 1$, reduces to the assertion that every non-constant homogeneous 
polynomial $f(x, y) \in K[x, y]$ of degree $\leqslant a$ splits into a product of linear factors over $K$.  This assertion follows
from Example~\ref{ex.etale2}. (Recall that $\rd_k(\Sym_n) = \rd_k(\text{\'Et}_n))$; see Example~\ref{ex.etale3}.) 

For the induction step, assume $n \geqslant 2$ and consider the incidence variety 
\[ \xymatrix{  & I = \{ (p, L) \in X \times \widehat{\bbP}^n  \; | \; p \in L \}  \ar@{->}[ld]_{\pi_1} \ar@{->}[rd]^{\pi_2} & \\
 X &  &\widehat{\bbP}^n ,} \]   
where $\widehat{\bbP}^n$ is the dual projective space parametrizing hyperplanes in $\bbP^n$ and $\pi_1$, $\pi_2$ are projections 
to the first and second factor, respectively. Clearly $\pi_1$ is surjective; there is a hyperplane in $\bbP^n$ through every point of $X$.
By the induction assumption, $K$-points are dense $\pi_2^{-1}(L)$ for every $L \in \widehat{\bbP}^{n}(K)$.
Since $K$-points are dense in $\widehat{\bbP}^n$, we conclude that $K$-points are dense in $I$. Projecting them to $X$ via $\pi_1$, 
we see that $K$-points are dense in $X$, as desired.

\smallskip
(b) Since $K$ is closed at level $d \geqslant 1$, every quadratic form splits over $K$. 
That is, $Q$ is the zero locus of a split quadratic form $q(x_0, \ldots, x_n)$ of rank $r$ over $K$. 
Let $m = \lfloor \dfrac{r}{2} \rfloor + (n + 1 - r)$
and $\Gr_q(m, n + 1)$ be the isotropic Grassmannian of maximal isotropic subspaces of $q$. In other words, $\Gr_q(m, n)$ parametrizes 
linear subspaces of (projective) dimension $m - 1$ which are contained in $Q$. Since $q$ is split, $K$-points are dense in $\Gr_q(m, n)$.
Consider the incidence variety
\[ \xymatrix{  & I = \{ (p, L) \in (Q \cap X) \times \Gr_q(m, n)  \; | \; p \in L \}  \ar@{->}[ld]_{\pi_1} \ar@{->}[rd]^{\pi_2} & \\
 Q \cap X &  & \Gr_q(m, n).} \]  
Note that there exists a maximal isotropic subspace through every point of $Q$; in particular, $\pi_1$ is surjective. On the other hand,
our assumption that $\dim(X) \geqslant  \lfloor \dfrac{r+1}{2} \rfloor$ ensures that $\dim(L) + \dim(X) = (m - 1) + \dim(X) \geqslant n$ and thus
$L \cap X \neq \emptyset$ every $L \in \Gr_q(m, n)$. This tells us that
$\pi_2$ is surjective. The fiber $\pi_1^{-1}(L) = L \cap X$ of a $K$-rational point $L$ of $\Gr_q(m, n)$ is a closed subvariety
of degree $d$ in $L \simeq \bbP^{m-1}$ defined over $K$. By part (a), $K$-points are dense in every such fiber. Since $K$-points are dense in
$\Gr_q(m, n)$, we conclude that $K$-points are dense in $I$. Projecting them to $Q \cap X$ via $\pi_1$, 
we see that $K$-points are dense in $Q \cap X$ as well.
\end{proof}

\begin{lem} \label{lem.upper3} 
Consider a generically free action of an algebraic group $G$ on an algebraic variety $X$ defined over a field $k$. Supposed
that $K$-points are dense in the twisted variety ${\, }_{T} \, X$ for every field $K$ containing $k$, closed at level $d$
and every $G$-torsor $T \to \Spec(K)$. Then $\rd_k(G) \leqslant \max \{ d, \, \dim(X) - \dim(G) \}$.
\end{lem}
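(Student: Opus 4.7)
The plan is to bound $\rd_k([T])$ for an arbitrary $G$-torsor $T \to \Spec(K_0)$ with $K_0 \in \Fields_k$ by exhibiting a splitting extension of $K_0$ of level at most $\max\{d,\dim X-\dim G\}$; taking the supremum over $T$ and $K_0$ then yields the stated bound on $\rd_k(G)$.

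First I would pass to the level-$d$ closure $K := K_0^{(d)}$, which is closed at level $d$ by Proposition~\ref{prop.level-d.1}(d) and every finite subextension of which has level at most $d$ over $K_0$ by Proposition~\ref{prop.level-d.1}(a). The density hypothesis applied to $K$ and the torsor $T_K$ yields a Zariski-dense set of $K$-points in ${\, }_{T_K} X_K$, and in particular a $K$-point $p$ of the open subvariety ${\, }_{T_K} U_K$, where $U \subset X$ is a $G$-invariant dense open on which $G$ acts freely. This $p$ corresponds to a $G$-equivariant morphism $\phi\colon T_K \to U_K$ over $K$. Because $G$ acts freely on $U$ and $T_K$ is a single $G$-orbit, the image $V = \phi(T_K)$ is a single $G$-orbit of the form $V = U_y$ for some $y \in Y(K)$ with $Y := U/G$, and $\phi$ induces a $G$-equivariant isomorphism $T_K \cong U_y$ of $G$-torsors over $K$. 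Splitting $T$ thus reduces to splitting the $G$-torsor $U_y$ over a further finite extension $L/K$.

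The main step is to exhibit such an $L$ with $\lev_k(L/K) \leqslant \dim Y = \dim X - \dim G$. My plan is to realise $L$ as the residue field of a closed point of $U$ lying over $y_L$; any such point trivialises the fibre $U_y$. The level bound comes from the fact that $Y$ is a $k$-variety of dimension $\dim Y$, so the descent data for $y$ can be packaged as $\leqslant \dim Y$ algebraic conditions over $K$, and iteratively applying Lemma~\ref{lem.ed2} then decomposes the splitting extension into a tower of $\leqslant \dim Y$ simple extensions each of essential dimension $\leqslant 1$. Finally, Lemma~\ref{lem.lev4} applied to the tower $K_0 \subset K \subset L$ yields $\lev_k(L/K_0) \leqslant \max\{d,\dim X - \dim G\}$, as required.

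The principal obstacle is this last step: producing a splitting extension of $U_y$ of level $\leqslant \dim Y$ in the absence of an explicit projective embedding of $Y$ or degree bound for $U \to Y$. I anticipate that this requires an inductive compression argument reducing $\dim X$ by one at each stage, for example via a generic $G$-invariant slice of $X$, while carefully preserving the density hypothesis in the inductive step by invoking Proposition~\ref{prop.functor2} together with the fact that $K$ has been arranged to be closed at level $d$.
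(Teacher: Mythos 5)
Your setup is fine and matches the paper up to the point where you obtain a $K$-point of ${\,}_{T}U$ for $K$ closed at level $d$ (the reduction via Proposition~\ref{prop.level-d}(c)/\ref{prop.level-d.1} and the passage to a $G$-invariant dense open $U$ on which the action is free are exactly what is needed). The gap is in your final step, and it is genuine: you try to produce a splitting extension $L/K$ with $\lev_k(L/K)\leqslant \dim X-\dim G$ directly over the (huge) field $K$, and you concede you do not know how to do this. The mechanism you sketch for it does not work: Lemma~\ref{lem.ed2} decomposes a finite extension into simple extensions each of essential dimension $\leqslant \ed_k(L/K)$, \emph{not} $\leqslant 1$, and in any case the level of a tower is controlled by the \emph{maximum} of the essential dimensions of its steps, not by the \emph{number} of steps; so ``$\leqslant \dim Y$ algebraic conditions'' does not translate into ``level $\leqslant \dim Y$.''

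The missing idea is that one should not split $T$ over $K$ at all, but rather \emph{descend} it. The $K$-point of ${\,}_{T}U$ is equivalent to a $G$-equivariant morphism $T\to U$ over $k$, i.e.\ $T$ is the pullback of the $G$-torsor $U\to B$ (where $B=U/G$, $\dim B=\dim X-\dim G$) along the induced $K$-point $y\colon \Spec(K)\to B$. That point factors through the residue field $K_0=k(y')$ of its scheme-theoretic image $y'\in B$, an intermediate field $k\subset K_0\subset K$ with $\trdeg_k(K_0)\leqslant \dim B$. Hence $\ed_k(T)\leqslant \dim X-\dim G$, and the inequality $\rd_k(T)\leqslant \ed_k(T)$ of Remark~\ref{rem.group2.5} (i.e.\ Lemma~\ref{lem.functor-3}(b), whose proof splits the descended torsor over $K_0$ by a finite extension $L_0/K_0$ with $\lev_k(L_0/K_0)\leqslant \trdeg_k(K_0)$ in a single step) finishes the argument. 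With this replacement, no ``inductive compression'' or slicing of $X$ is needed; one should also first replace $X$ by a suitable $G$-invariant union of irreducible components so that $U\to B$ is actually a torsor.
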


\begin{proof} Let $K/k$ be a field closed at level $d$ and $T \to \Spec(K)$ be a $G$-torsor.
By Proposition~\ref{prop.level-d}(c) it suffices to show that 
\begin{equation} \label{e.twisted} \rd_k(T) \leqslant \dim(X) - \dim(G) .
\end{equation}
After replacing $X$ by a suitable $G$-invariant union of its irreducible components, we may assume that
$G$ transitively permutes the irreducible components of $X$. In this case there exists
a $G$-invariant dense open subvariety $U \subset X$, which is the total space of a $G$-torsor $U \to B$.
By our assumption ${\, }_{T} \, U$ has a $K$-point. Equivalently, there exists a $G$-equivariant map $T \to U$
defined over $k$; see, e.g., \cite[Proof of Theorem 1.1(a) on p. 508]{duncan2015versality}. 
This implies that $\ed_k(T) \leqslant \dim(B) = \dim(X) - \dim(G)$. 
The desired inequality~\eqref{e.twisted} now follows from the inequality $\rd_k(T) \leqslant \ed_k(T)$ 
of Remark~\ref{rem.group2.5}.
\end{proof}

\begin{proof}[Proof of Proposition~\ref{prop.hypersurface}] (a) Set $d = \max \{ \rd_k(\Sym_a), \, 1 \}$.
Suppose a field $K \in \Fields_k$ is closed at level $d$ and $T \to \Spec(K)$ is a $G$-torsor. Since $d \geqslant 1$, 
$K$ is solvably closed; see Corollary~\ref{cor.level-1}. By Lemma~\ref{lem.upper3} it suffices to show 
that $K$-points are dense in ${\, }_T X$.

Note that the $G$-equivariant closed immersion $X \hookrightarrow \bbP(V)$ induces a natural closed immersion
${\, }_TX \hookrightarrow {\, }_T \bbP(V)$ of $K$-varieties. Here ${\, }_T \bbP(V)$ is a Brauer-Severi variety over $K$. 
Since $K$ is solvably closed, every Brauer-Severi variety over $K$ is split. Thus ${\, }_T X$ is a closed 
subvariety of  $\bbP(V)_K$. The degree of ${\, }_T X$ in $\bbP(V)_K$ is $a$, same as the degree of $X$ 
in $\bbP(V)$. (To see this, pass to $\overline{K}$.) By Lemma~\ref{lem.upper1}(a), $K$-points are dense in ${\, }_T X$.
The desired inequality, $\rd(G) \leqslant \{ d, \, \dim(X) - \dim(G) \}$, now follows from Lemma~\ref{lem.upper3}.

\smallskip
(b) The argument here is the same as in part (a), with Lemma~\ref{lem.upper1}(b) used to show that $K$-points are dense in
${\, }_T (Q \cap X)$.
\end{proof}

\section{Upper bounds on the resolvent degree of some reflection groups}
\label{sect.reflection}

The purpose of this section is to prove the following.

\begin{prop} \label{prop.weyl} Let $W$ be Weyl group of the simple Lie algebra (or equivalently, a simple algebraic group) 
of type $E_i$. Here $i = 6$, $7$ or $8$. Let $k$ be an arbitrary field. Then $\rd_k(W) \leqslant i - 3$.
\end{prop}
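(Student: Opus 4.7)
My plan is to apply Proposition~\ref{prop.hypersurface}(b) to the projectivized reflection representation of $W = W(E_i)$ on the $i$-dimensional Euclidean space $V$, using the $W$-invariant inner product to define the quadric $Q \subset \bbP(V)$ and the (essentially unique) $W$-invariant polynomial $f$ of lowest degree $d_i > 2$ to cut out the hypersurface $X \subset \bbP(V)$. From the classical list of fundamental degrees of $W(E_i)$ we have $d_6 = 5$, $d_7 = 6$, $d_8 = 8$.

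The group-theoretic setup is as follows. For $i = 6$ we have $-I \notin W(E_6)$, so the reflection representation descends to a faithful embedding $W(E_6) \hookrightarrow \PGL(V) = \PGL_5$, and Proposition~\ref{prop.hypersurface}(b) applies directly. For $i = 7, 8$ on the other hand $-I \in W(E_i)$, and here I would first invoke Proposition~\ref{prop.group2}(a) applied to the central exact sequence $1 \to \{\pm I\} \to W(E_i) \to W(E_i)/\{\pm I\} \to 1$, together with $\rd_k(\{\pm I\}) \leqslant 1$ (Example~\ref{ex.solvable-group}), to reduce the problem to bounding $\rd_k(W(E_i)/\{\pm I\})$; this quotient embeds faithfully in $\PGL(V) = \PGL_{i-1}$. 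In all three cases, $b := \dim X = i - 2$, and the intersection with $Q$ is proper since $\deg X = d_i \neq 2 = \deg Q$, so $\dim(Q \cap X) = b - 1 = i - 3$; the inequality $b \geqslant \lfloor (r+1)/2 \rfloor$ with $b = i - 2$ and $r = i$ holds for $i \geqslant 3$. Granting generic freeness of the action on $Q \cap X$, Proposition~\ref{prop.hypersurface}(b) yields
\[
\rd_k(W(E_i)) \leqslant \max\{\, i - 3,\ \rd_k(\Sym_{d_i}),\ 1\,\}.
\]
The classical bound of Example~\ref{ex.hypersurface}(2) gives $\rd_k(\Sym_5) \leqslant 1$, $\rd_k(\Sym_6) \leqslant 2$, and $\rd_k(\Sym_8) \leqslant 4$, each of which is $\leqslant i - 3$ in the corresponding case, completing $\rd_k(W(E_i)) \leqslant i - 3$.

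The main obstacle will be verifying that the action of $W$ (resp.\ $W/\{\pm I\}$) on $Q \cap X$ is generically free, which is the one non-routine hypothesis of Proposition~\ref{prop.hypersurface}(b). The locus in $\bbP(V)$ with nontrivial stabilizer is a finite union of projective linear subspaces $\bbP(E_\lambda(w))$ arising as eigenspaces of non-identity elements $w \in W$ (resp.\ $w \notin \{\pm I\}$), and I would eliminate each possible codimension regime for a hypothetical component of $Q \cap X$ contained in such a subspace. The codimension-$1$ fixed loci are exactly the reflection hyperplanes $\bbP(H_\alpha)$; here I would observe that $f$ is irreducible of degree $d_i \in \{5,6,8\}$, strictly smaller than the number of positive roots $|\Phi^+| \in \{36,63,120\}$, so $f$ is not divisible by the discriminant $\prod_\alpha \alpha$ and hence no $H_\alpha$ lies in $X$; a proper-intersection calculation inside $\bbP(H_\alpha)$ then shows that $\dim(Q \cap X \cap \bbP(H_\alpha)) = i - 4 < i - 3$, ruling out any such component. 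The codimension-$2$ fixed loci correspond to linear subspaces of $V$ of dimension $i - 2$; such a subspace cannot lie on the nondegenerate quadric $Q$ because its dimension exceeds the Witt index $\lfloor i/2 \rfloor$, so no $(i-3)$-dimensional component of $Q \cap X$ can coincide with one. Finally, the codimension-$\geqslant 3$ case is immediate, since such a fixed locus has dimension $\leqslant i - 4 < \dim(Q \cap X)$.
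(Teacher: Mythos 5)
Your setup coincides with the paper's (which carries the argument out only for $i=8$, leaving $i=6,7$ as an exercise): the same hypersurface $X=\{f_{d_i}=0\}$, the same quadric $Q=\{f_2=0\}$, the same reduction modulo the center $\{\pm I\}$ via Proposition~\ref{prop.group2}(a), and the same appeal to Proposition~\ref{prop.hypersurface}(b) with the bound $\rd_k(\Sym_{d_i})\leqslant d_i-4$. The gap is exactly where you anticipate it, in the verification of generic freeness, and specifically in the codimension-one case. You assert that since no reflection hyperplane $\bbP(H_\alpha)$ lies in $X$, ``a proper-intersection calculation'' gives $\dim(Q\cap X\cap\bbP(H_\alpha))=i-4$. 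But the intersection of $Q$ with $X\cap\bbP(H_\alpha)$ is proper if and only if no component of $X\cap\bbP(H_\alpha)$ is contained in $Q$, and such a component would be precisely an $(i-3)$-dimensional component of $Q\cap X$ sitting inside $\bbP(H_\alpha)$ --- the configuration you are trying to exclude. The step is therefore circular, and no degree bookkeeping rules out, say, $q|_{H_\alpha}$ dividing $f|_{H_\alpha}$. (A smaller instance of the same issue occurs earlier: $\dim(Q\cap X)=i-3$ does not follow from $\deg X\neq\deg Q$; one must exclude $q\mid f$, e.g.\ by noting that $f_2$ and $f_{d_i}$ are part of a system of basic invariants and hence algebraically independent.)

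What the paper uses to close exactly this point is Springer's theory of regular elements together with the categorical quotient $\pi\colon V\to V/\!\!/W\simeq\bbA^i$. Since $Q^{\rm aff}\cap X^{\rm aff}=\pi^{-1}(H_1\cap H_2)$ for two coordinate hyperplanes and the fibres of $\pi$ over closed points are single $W$-orbits, $W$ permutes the irreducible components of $Q\cap X$ transitively and each has dimension $i-3$; this replaces your proper-intersection claims. Then Springer's bound $\dim V(g,\zeta)\leqslant a(d)$ (the number of fundamental degrees divisible by $d=\operatorname{ord}(\zeta)$) disposes of all eigenvalues $\zeta\neq\pm1$, and the hyperplane case is settled by exhibiting a single regular vector in $Q^{\rm aff}\cap X^{\rm aff}$: a regular eigenvector $v$ of a regular element of order $3$ satisfies $f_2(v)=f_8(v)=0$ because $3\nmid 2$ and $3\nmid 8$, and by definition lies on no reflection hyperplane; transitivity of $W$ on the components then shows that no component is contained in the non-free locus. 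Your codimension-$\geqslant 2$ analysis (Witt index plus a dimension count) is fine, but without an argument of the above kind the codimension-one case --- which is the heart of the matter --- remains open.
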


The inequality $\rd_k(W) \leqslant 5$, where $W$ is the Weyl group of $E_8$, will play an important role in the proof of Theorem~\ref{thm.main3}.
We will only supply a proof of Proposition~\ref{prop.weyl} in this case (for $i = 8$).
The other two inequalities (where $i = 6$ and $7$) will not be used in this paper. They are proved by a minor modification of the same argument;
we leave the details as an exercise for the reader. 

Note that by Theorem~\ref{thm.main1}, $\rd_{\bbC}(W_i) = \rd_{\bbQ}(W_i) = \rd_k(W_i)$ for any field $k$ of characteristic $0$.
Moreover, by Theorem~\ref{thm.main2}, $\rd_k(W_i) \leqslant \rd_{\bbC}(W_i)$ for any field $k$ of characteristic $p$.
Thus the purpose of proving Theorem~\ref{prop.weyl}, we may assume that $k = \bbC$. This places us into the setting of Springer's 
classic paper on complex reflection groups~\cite{springer-reflection}.

We now proceed with the proof of Proposition~\ref{prop.weyl} for $i = 8$ and $k = \mathbb C$.
Consider the natural representation $W \hookrightarrow \GL(V) = \GL_8$ where $V$ is a Cartan subalgebra of $E_8$.
The kernel $Z$ of this representation is the center of $W$; it is a cyclic group of order $2$. 
We will denote the non-trivial element of $Z$ by $z$ and the image of $W$ in $\PGL_8$ by $\overline{W} = W/Z$. 

Recall that the ring of invariants $\bbC[V]^W$ is a polynomial 
ring over $\bbC$ 
in $8$ variables. The generators $f_2$, $f_8$, $f_{12}$, $f_{14}$, $f_{18}$, $f_{20}$, $f_{24}$ and $f_{30}$ are called basic invariants;
each $f_i$ is a homogeneous $G$-invariant polynomial of degree $i$. These basic invariants are not unique but their degrees
are. That is, if $\bbC[V]^G$ is generated by $8$ homogeneous elements $g_1, \ldots, g_8$,
then the degrees of $g_1, \ldots, g_8$ are 
\begin{equation} \label{e.fund-degrees} 2, \, 8, \, 12, \, 14, \, 18, \, 20, \, 24, \,  30. \end{equation}
These integers are called the fundamental degrees of $W$.

Our strategy is to apply Proposition~\ref{prop.hypersurface}(b) with
$G = \overline{W}$, $X \subset \bbP(V) = \bbP^7$ the hypersurface $f_8 = 0$ and $Q \subset \bbP^7$ the quadric hypersurface $f_2 = 0$. Denote the
affine cones of $Q$ and $X$ by $Q^{\rm aff}$ and $X^{\rm aff}$, respectively.

\begin{lem} \label{lem.e8-1}
(a) $W$ transitively permutes the irreducible components of $Q \cap X$ (or equivalently, the irreducible components of
$Q^{\rm aff} \cap X^{\rm aff}$).

\smallskip
(b) Each irreducible component of $Q \cap X$ is of dimension $5$.
\end{lem}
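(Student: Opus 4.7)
\textbf{Proof plan for Lemma~\ref{lem.e8-1}.} The key structural input is the Chevalley quotient $\pi \colon V \to V/\!/W = \Spec \bbC[V]^W \cong \bbA^8$, a finite surjective morphism which, with coordinates given by the basic invariants $f_2, f_8, f_{12}, \ldots, f_{30}$, identifies the target with affine $8$-space. Both parts of the lemma will reduce to showing that $f_2, f_8$ is a regular sequence in $\bbC[V]$ whose vanishing locus is the $\pi$-preimage of an irreducible subvariety.

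For (b), the plan is to verify that $Q^{\rm aff}$ is an irreducible hypersurface (since $f_2$ is a non-degenerate quadratic form on $V$, which is irreducible in the UFD $\bbC[V]$) and that $f_8$ does not vanish on $Q^{\rm aff}$. The latter amounts to $f_8 \notin (f_2)$, which I would prove by contradiction: a factorization $f_8 = f_2 g$ in the domain $\bbC[V]$ would force $g$ to be $W$-invariant of degree $6$, and the Hilbert series $\prod_i (1-t^{d_i})^{-1}$ of $\bbC[V]^W$ built from the fundamental degrees~\eqref{e.fund-degrees} shows that $\bbC[V]^W_6$ is one-dimensional, spanned by $f_2^3$. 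Hence $g = c f_2^3$ and $f_8 = c f_2^4$, contradicting the algebraic independence of the basic invariants. Krull's Hauptidealsatz applied to the irreducible $Q^{\rm aff}$ and the nonzero function $f_8|_{Q^{\rm aff}}$ then yields that every irreducible component of $Q^{\rm aff} \cap X^{\rm aff} = Q^{\rm aff} \cap \{f_8 = 0\}$ has pure codimension $1$ in $Q^{\rm aff}$, hence dimension $6$; projectivizing gives dimension $5$ for the components of $Q \cap X$.

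For (a), observe that $Q^{\rm aff} \cap X^{\rm aff} = \pi^{-1}(L)$, where $L = \{f_2 = f_8 = 0\} \subset V /\!/ W$ is a coordinate affine subspace of dimension $6$ in $\bbA^8$, hence irreducible. It is then a standard consequence of the quotient property of $\pi$ that $W$ transitively permutes the irreducible components of $\pi^{-1}(L)$: each component has dimension $6 = \dim L$ by (b), and since $\pi$ is finite, each component dominates $L$ and therefore meets the generic fiber $\pi^{-1}(y)$ for $y \in L$ general; that fiber is a single $W$-orbit by construction of $\pi$, forcing $W$ to act transitively on the set of components. None of these components is contained in the origin (all have dimension $6$), so the conclusion descends to the projectivization, giving $W$ (equivalently $\overline{W}$) transitive on the components of $Q \cap X$.

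The main potential obstacle is the Hilbert-series bookkeeping that rules out $f_8 \in (f_2)$; without the fundamental-degree information one would have no a priori way to preclude a relation between two chosen basic invariants. The rest is a combination of routine commutative algebra (Krull's Hauptidealsatz, irreducibility of non-degenerate quadrics) and the standard dictionary between $W$-orbits and fibers of the Chevalley quotient.
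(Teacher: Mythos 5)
Your argument is correct and follows essentially the same route as the paper's: both identify $Q^{\rm aff}\cap X^{\rm aff}$ with $\pi^{-1}(H_1\cap H_2)$ under the Chevalley quotient $\pi$, use that $H_1\cap H_2\simeq \bbA^6$ is irreducible, and derive transitivity on components from the fact that the fibre of $\pi$ over a general point of this base is a single $W$-orbit. The only (harmless) divergence is in the dimension count for part (b): the paper gets the upper bound $\dim \leqslant 6$ for each component directly from finiteness of $\pi$ over the $6$-dimensional base, whereas you establish that $f_2,f_8$ is a regular sequence via the Hilbert-series check that $f_8\notin(f_2)$ and then apply Krull's Hauptidealsatz --- more work than strictly necessary, but valid.
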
 

\begin{lem} \label{lem.e8-2}
The action of $\overline{W}$ on $Q \cap X$ is generically free.
\end{lem}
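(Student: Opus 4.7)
The plan is to locate the non-free locus of $\overline{W}$ on $\mathbb{P}(V) = \mathbb{P}^7$ and show that no irreducible component of $Q \cap X$ lies inside it. For $w \in W$ and $\lambda \in \mathbb{C}^*$ write $V(w,\lambda) = \ker(w - \lambda I)$. A point $[v] \in \mathbb{P}(V)$ has nontrivial $\overline{W}$-stabilizer if and only if $v \in V(w,\lambda)$ for some $w \notin Z$, so the non-free locus equals
\[ S \;=\; \bigcup_{\substack{w \in W \setminus Z\\ \lambda \in \mathbb{C}^*}} \mathbb{P}(V(w,\lambda)). \]
Since the components of $Q \cap X$ form a single $W$-orbit by Lemma~\ref{lem.e8-1}(a), it is enough to exhibit one component not contained in $S$.

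First I would classify the eigenspaces by dimension. Because $W$ is a real reflection group preserving the nondegenerate Killing form, any non-real eigenvalue of $w$ occurs paired with its complex conjugate, and a short case analysis then shows that $\dim V(w,\lambda) = 7$ occurs exactly for the pairs $(s,1)$ and $(sz,-1)$ with $s$ a reflection, each giving $V(w,\lambda) = V^s$, while all other non-central pairs have $\dim V(w,\lambda) \leqslant 6$. For such lower-dimensional eigenspaces, the bound on totally isotropic subspaces of a nondegenerate 8-dimensional quadratic form (Witt index $4$) forces $V(w,\lambda) \not\subset Q^{\mathrm{aff}}$ as soon as $\dim V(w,\lambda) \geqslant 5$; hence $\mathbb{P}(V(w,\lambda)) \cap Q$ has projective dimension at most $\dim V(w,\lambda) - 2 \leqslant 4$, so $\mathbb{P}(V(w,\lambda)) \cap Q \cap X$ has dimension at most $4$. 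For $\dim V(w,\lambda) \leqslant 4$ the projective dimension of $\mathbb{P}(V(w,\lambda))$ itself is at most $3$. Either way, no component of $Q \cap X$ (pure of dimension $5$ by Lemma~\ref{lem.e8-1}(b)) can be contained in $\mathbb{P}(V(w,\lambda))$.

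The crux, and main obstacle, is the reflection-hyperplane case $V(w,\lambda) = V^s$. Here $f_2|_{V^s}$ has rank $7$ since the root $\alpha$ defining $s$ is anisotropic for the Killing form, so $\mathbb{P}(V^s) \cap Q$ is a smooth $5$-dimensional quadric of degree $2$ in $\mathbb{P}^7$, and $\mathbb{P}(V^s) \cap Q \cap X$ has dimension $4$ unless $\mathbb{P}(V^s) \cap Q \subset X$. I would rule out this exceptional containment by a Bezout degree count: if it held for one reflection $s$, then by $W$-conjugacy of reflections in $E_8$ (all reflections form a single conjugacy class, since $E_8$ is simply laced) it would hold for all $120$ reflection hyperplanes, producing at least $120$ distinct degree-$2$ components of $Q \cap X$ and contributing total degree at least $240$. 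But $Q \cap X$ is a complete intersection of hypersurfaces of degrees $2$ and $8$, so by Bezout its total degree equals $16$, a contradiction. Hence $\mathbb{P}(V^s) \cap Q \not\subset X$, and combined with the previous paragraph this shows that no component of $Q \cap X$ lies in $S$, so $\overline{W}$ acts generically freely on $Q \cap X$.
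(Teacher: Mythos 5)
Your argument is correct, and it reaches the conclusion by a genuinely different route from the paper. The paper also reduces to showing that no component of $Q\cap X$ lies in the union of eigenspaces, but it disposes of eigenvalues of order $\geqslant 3$ via Springer's bound $\dim V(g,\zeta)\leqslant a(d)$ (the number of fundamental degrees divisible by the order $d$ of $\zeta$), reduces the rest to the union of the fixed spaces $V(g,1)$, and then exhibits a single explicit point of $Q^{\rm aff}\cap X^{\rm aff}$ with trivial stabilizer: a regular eigenvector $v$ of a regular element of order $3$, for which $f_d(v)=\zeta_3^{d}f_d(v)$ forces $f_2(v)=f_8(v)=0$. You replace both of Springer's inputs: the elementary observation that a real transformation of finite order can have an eigenvalue of multiplicity $7$ only for $\lambda=\pm 1$ (so the offending element is a reflection up to the center) stands in for the eigenspace bound, and the Witt-index plus Bezout counts stand in for the regular element. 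Your version is more self-contained -- it needs nothing from the theory of regular elements -- at the cost of the ad hoc degree count over the $120$ reflection hyperplanes, where the paper's one regular vector kills all the fixed spaces $V(g,1)$ simultaneously. Two small points to make explicit in a write-up: a component of $Q\cap X$ contained in the finite union $S$ is, by irreducibility, contained in a single $\mathbb{P}(V(w,\lambda))$, which is what your case analysis actually rules out; and in the Bezout step the relevant inequality is that the sum of the degrees of the \emph{distinct} components is at most $16$ (Bezout counts intersection multiplicities), which is what contradicts $120\cdot 2=240$.
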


\smallskip \noindent
Assume for a moment that we have established these two lemmas. Then Proposition~\ref{prop.hypersurface}(b) tells us that
\[ \rd_{\bbC}(\overline{W}) \leqslant \max \{ \dim(X) - 1, \, \rd_{\bbC}(\Sym_8) \} = \max \{ 5, \, 4 \} = 5. \]
Here I used the fact that $\rd_{\bbC}(\Sym_8) \leqslant 4$; see \cite[Theorem 5.6]{farb-wolfson} or Example~\ref{ex.hypersurface}(b).
Applying Proposition~\ref{prop.group2}(a) to the exact sequence $1 \to Z \to W \to \overline{W} \to 1$, we conclude that
\[ \rd_{\bbC}(W) \leqslant \max \{ \rd_{\bbC}(\overline{W}), \, \rd_{\bbC}(Z) \} \leqslant \max \{ 5, \, 1 \} = 5, \]
as desired. It thus remains to prove Lemmas~\ref{lem.e8-1} and~\ref{lem.e8-2}.

\begin{proof}[Proof of Lemma~\ref{lem.e8-1}] 
The natural inclusion $\bbC[f_2, f_8, \ldots, f_{30}] = \bbC[V]^W \hookrightarrow \bbC[V]$
induces the categorical quotient map $\pi \colon V \to \bbA^8$ given by $\pi \colon v \to (f_2(v), f_8(v), \ldots, f_{30}(v))$.
Note that $\pi$ is a finite morphism, and the fibers of $\mathbb C$-points of $\mathbb A^8$ are precisely the $W$-orbits in $V$. 
By definition, $Q^{\rm aff}$ and $X^{\rm aff}$ the preimages of coordinate hyperplanes $H_1$ and $H_2$ in $\mathbb A^8$ given by
$x_1 = 0$ and $x_2 = 0$, respectively. Both (a) and (b) now follows from the fact that $H_1 \cap H_2 \simeq \mathbb A^6$ is irreducible
of dimension $6$. 
\end{proof}

\begin{proof}[Proof of Lemma~\ref{lem.e8-2}] 
Assume the contrary: the $\overline{W}$-action on $Q \cap X$ is not generically free. This means that $Q^{\rm aff} \cap X^{\rm aff}$ 
is covered by the union of eigenspaces $V(g, \zeta)$, where $g$ ranges over $W \setminus Z$ and $\zeta$ ranges over 
the roots of unity in $\mathbb C$. Here
\[ V(g, \zeta) = \{ v \in V \, | \, g(v) = \zeta v \} \]
stands for the $\zeta$-eigenspace of $g$, as in~\cite{springer-reflection}.

If $\zeta$ is a primitive root of unity of degree $d$, then $\dim(V(g, \zeta)) \leqslant a(d)$, where $a(d)$ is the number of fundamental degrees~\eqref{e.fund-degrees} divisible by $d$; see \cite[Theorem 3.4]{springer-reflection}.
By inspection we see that $a(d) \leqslant 4$ for any $d \geqslant 3$, with equality for $d = 3, 4, 6$. 
Thus the union of eigenspaces 
\[ \bigcup^{g \in W \setminus Z}_{\operatorname{deg}(\zeta) \geqslant 3} \, V(g, \zeta) \]
is at most $4$-dimensional.
Since every irreducible component of $Q^{\rm aff} \cap X^{\rm aff}$ is of dimension $6$
(see Lemma~\ref{lem.e8-1}(b)), $Q^{\rm aff} \cap X^{\rm aff}$ is, in fact, covered by 
the union of $V(g, \pm 1) = V^g$, as $g$ ranges over $W \setminus Z$. 
Since $V(g, -1) = V(zg, 1)$ for each $g$, we conclude that  
\[ V^{\rm non-free} = \cup_{g \in W \setminus Z} \, V(g, 1) \]
covers one of the irreducible components of $Q^{\rm aff} \cap X^{\rm aff}$.
Clearly $V^{\rm non-free}$ is $W$-invariant.
By Lemma~\ref{lem.e8-1}(a), if it covers one irreducible component of $Q^{\rm aff} \cap X^{\rm aff}$, it covers all of them. 
In other words, \[ Q^{\rm aff} \cap X^{\rm aff} \subset \bigcup_{1 \neq g \in W}  V(g, 1) . \] 
Thus in order to produce a contradiction, it suffices to exhibit one point $v \in V$ such that 

\smallskip
(i) $\Stab_W(v) = \{ 1 \}$ or equivalently, $v \not \in V(g, 1)$ for any $1 \neq g \in W$, and  

\smallskip
(ii) $v \in Q^{\rm aff} \cap X^{\rm aff}$ of equivalently, $f_2(v) = f_8(v) = 0$.

\smallskip
By \cite[p. 177, Table 3]{springer-reflection}, $W$ has a regular element of order $3$. This means that $V(g, \zeta_3)$ contains a regular vector
$v$, where $\zeta_3$ is a primitive cube root of unity. Recall that a vector in $V$ is called regular if it is not contained in any reflecting hyperplane, and that for any regular vector $v$, the stabilizer $\Stab_W(v) = \{ 1 \}$; see \cite[Proposition 4.1]{springer-reflection}.
Moreover, if $f_d$ is one of the fundamental invariants, then
\[ f_d(v) = f_d(gv) = f_d(\zeta_3 v) = \zeta_3^d f_d(v). \]
In particular, $f_d(v) = 0$ when $d = 2$ and $8$. Thus the regular vector $v$ satisfies conditions (i) and (ii).
This completes the proof of Lemma~\ref{lem.e8-2} and thus of Proposition~\ref{prop.weyl} for $i = 8$.
\end{proof}

\section{Proof of Theorem~\ref{thm.main3}} 
\label{sect.proof-main3}

Once again, by Theorem~\ref{thm.main1}, we may replace $k$ by its algebraic closure and 
thus assume without loss of generality that $k$ is algebraically closed.

\smallskip
{\bf Reduction to the case, where $G$ is smooth.} Recall that $\rd_k(G) \leqslant \max \{ \rd_k(G_{\rm red}), 1 \}$
by Proposition~\ref{prop.reduced}(a). Thus in order to prove Theorem~\ref{thm.main3}
for $G$, it suffices to prove it for $G_{\rm red}$.

\smallskip
{\bf Reduction to the case, where $G$ is affine.} We may now assume that $G$ is smooth.
By Chevalley's structure theorem~\cite{chevalley, Con} there exists 
a unique connected smooth normal affine $k$-subgroup $G_{\rm aff}$ of $G$ such that the quotient $G/G^{\rm aff}$ is an abelian variety.
By Proposition~\ref{prop.abelian}, $\rd_k(G/G^{\rm aff}) \leqslant 1$.
Applying  Proposition~\ref{prop.group2}(a) to the exact sequence $1 \to G_{\rm aff} \to G \to G/G^{\rm aff} \to 1$, we obtain
$\rd_k(G) \leqslant \max \{ \rd_k(G_{\rm aff}), \,  1 \}$. Thus in order to prove Theorem~\ref{thm.main3} for $G$, 
it suffices to prove it for 
$G_{\rm aff}$.

\smallskip
{\bf Reduction to the case, where $G$ is semisimple.} We may now assume that $G$ is affine. 
Let $\Rad(G)$ be the radical of $G$, i.e.,
the largest connected solvable normal subgroup of $G$. Denote the quotient (semisimple) group by $G^{ss}$ and 
consider the natural exact sequence
$1 \to \Rad(G) \to G \to G^{ss} \to 1$.
By Lemma~\ref{lem.group1}(b), $\rd_k ( \Rad(G) ) = 0$. Proposition~\ref{prop.group2}(a)
now tells us that $\rd_k(G) \leqslant \rd_k(G^{ss})$. Thus in order to prove Theorem~\ref{thm.main3} for $G$, 
it suffices to prove it for $G^{ss}$.

\smallskip
{\bf Reduction to the case, where $G$ is almost simple.}  
We will now assume that $G$ is semisimple. Then $G$ isogenous to the direct product $\widetilde{G} = G_1 \times \ldots \times G_r$ of its minimal 
connected normal subgroups. That is, there exists a central exact sequence
\[ 1 \to A \to \widetilde{G} \to G \to 1 , \]
where $A$ is a finite subgroup of a maximal torus of $G_1$; see \cite[Section 9.6.1]{springer}. Since we are assuming that
$k$ is algebraically closed, this tells us that $A$ is a finite diagonalizable group. Hence, $\rd_k(A) \leqslant 1$ by Proposition~\ref{prop.group2}(c).
The minimal connected normal subgroups 
$G_1, \ldots, G_r$ are (almost) simple; see \cite[Section 27.5]{humphreys}.
Proposition~\ref{prop.group2}(d) now tells us that it suffices to prove Theorem~\ref{thm.main3} 
for $\widetilde{G} = G_1 \times \ldots \times G_r$. Applying 
Proposition~\ref{prop.group2}(b) recursively, we see that
\[ \rd_k(G_1 \times \ldots \times G_r) = \max \, \{ \, \rd_k(G_1), \ldots, \rd_k(G_r) \, \}. \]
Thus in order to prove Theorem~\ref{thm.main3} for $G$, it suffices to prove it for each (almost) simple 
group $G_i$.

 \smallskip
 From now on we will assume that $G$ is (almost) simple. To complete the proof of Theorem~\ref{thm.main3}, it remains to establish the following.

\begin{prop} \label{prop.simple} Let $k$ be an algebraically closed field and $G$ an almost simple group defined over $k$.
Then (a) $\rd_k(G) \leqslant 5$ if $G$ is of type $E_8$ and (b) $\rd_k(G) \leqslant 1$ if $G$ is of any other type.
\end{prop}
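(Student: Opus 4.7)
\textbf{Proof plan for Proposition~\ref{prop.simple}.} The plan is to treat the two parts separately, in each case assembling the bound from ingredients established earlier in the paper.

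For part (b), let $G$ be an almost simple group of type other than $E_8$. The first step is to reduce to the simply connected case: taking the simply connected cover $\widetilde{G} \to G$ produces a central isogeny
\[ 1 \to A \to \widetilde{G} \to G \to 1, \]
in which $A$ is a finite subgroup of a maximal torus of $\widetilde{G}$. Because $k$ is algebraically closed, $A$ is diagonalizable, so Proposition~\ref{prop.group2}(d) gives $\rd_k(G) \leqslant \max\{\rd_k(\widetilde{G}),\, 1\}$, and it suffices to prove $\rd_k(\widetilde{G}) \leqslant 1$. Here the theorem of Tits cited in Section~\ref{sect.proof-main3} enters: every $\widetilde{G}$-torsor $T \to \Spec(K)$ splits over some radical extension $L/K$, and by Lemma~\ref{lem.radical}(b) any such extension satisfies $\lev_k(L/K) \leqslant 1$. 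Thus $\rd_k(T) \leqslant 1$, and taking the supremum over $K$ and $T$ yields the required inequality.

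For part (a), let $G$ be of type $E_8$. Since the simple group of type $E_8$ has trivial center, $G$ is simultaneously simply connected and adjoint; in particular $G$ is connected reductive. Any maximal torus of $G$ is split over the algebraically closed field $k$, so Corollary~\ref{cor.group3} applies and gives $\rd_k(G) \leqslant \rd_k(W)$, where $W$ is the Weyl group of $E_8$. Applying Proposition~\ref{prop.weyl} with $i = 8$ then provides $\rd_k(W) \leqslant 5$, which combined with the previous inequality yields the desired bound $\rd_k(G) \leqslant 5$.

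The final assembly above is short; the real content lies in the two external inputs. For part (b), this is Tits' splitting-by-radicals theorem, which is used as a black box. For part (a), it is the bound $\rd_k(W_{E_8}) \leqslant 5$ proved in Section~\ref{sect.reflection}, which is the main obstacle of the whole argument: it requires producing a $W$-invariant subvariety of $\bbP(V)$ of small dimension on which $W$ acts generically freely, and then invoking Proposition~\ref{prop.hypersurface}(b). In that proof, the candidate subvariety is the intersection of the quadric $f_2 = 0$ with the octic $f_8 = 0$ (where $f_2, f_8$ are basic invariants of $W$), and generic freeness is verified using Springer's theory of eigenspaces of complex reflection groups together with the list of fundamental degrees $2, 8, 12, 14, 18, 20, 24, 30$. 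Once that input is granted, Proposition~\ref{prop.simple} itself presents no further obstacle.
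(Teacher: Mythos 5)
Your proof is correct and follows essentially the same route as the paper: part (a) is identical (Corollary~\ref{cor.group3} plus Proposition~\ref{prop.weyl}), and part (b) invokes Tits' splitting-by-radicals theorem together with Lemma~\ref{lem.radical}, which is exactly the paper's primary argument (your preliminary reduction to the simply connected case via Proposition~\ref{prop.group2}(d) is harmless, since the paper states Tits' result for arbitrary simple groups). The only difference is that the paper additionally supplies a self-contained, type-by-type proof of part (b) for completeness, which your write-up omits but does not need.
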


\begin{proof} 
(a) Let $G$ be a simple group of type $E_8$ and let $W_8$ be the Weyl group of $G$. Then 
\[ \rd_k(G) \leqslant \rd_k(W_8) \leqslant 5 , \]
where the first inequality is given by Corollary~\ref{cor.group3}, and the second by Proposition~\ref{prop.weyl}.

\smallskip
(b) Tits~\cite[Section 2]{tits-resume} showed that if $G$ is a simple group of any type other than $E_8$, then $G$ has
no non-trivial torsors over any field $K$, closed under taking radicals. (Note that \cite{tits-resume} is reprinted in~\cite{tits-collected}.)
In particular, there are no non-trivial $G$-torsors over any field $K$ closed at level $1$. 
By Proposition~\ref{prop.level-d}(b) this implies that $\rd_k(G) \leqslant 1$, as claimed.

For the sake of completeness we will give a short direct proof of part (b), using the terminology of this paper. 
We begin with two preliminary observations. First, recall that every almost simple algebraic group is defined over $\mathbb{Z}$. 
Using Theorems~\ref{thm.main1} and~\ref{thm.main2}, we may assume without loss of generality 
that $k = \bbC$ is the field of complex numbers. This assumption will allow us to avoid 
some of the subtle points of the arguments in~\cite[Section 2]{tits-resume} which only come up in prime characteristic. 

The second observation is that if $G_1$ and $G_2$ are almost simple groups of the same type, then they are isogenous and hence,
by Proposition~\ref{prop.group2}(d), $\rd_{\mathbb C}(G_1) \leqslant  \max \, \{  \rd_{\mathbb C}(G_2), \, 1  \}$ and
$\rd_{\mathbb C}(G_2) \leqslant  \max \, \{  \rd_{\mathbb C}(G_1), \, 1  \}$. Consequently,
Proposition~\ref{prop.simple}(b) holds for $G_1$ if and only if it holds for $G_2$. In other words, it suffices 
to prove that $\rd_{\bbC}(G) \leqslant 1$ for one almost simple group $G$ of each type (other than $E_8$).

\smallskip
{\bf $G$ is of type $A_r$ or $C_r$.} Here we can take $G$ to be $G = \SL_{r+1}$ and $G = \Sp_{2r}$,
respectively. By Lemma~\ref{lem.group1}(c), in both cases, $\rd_k(G) = 0$. 

\smallskip
{\bf If $G$ is of type $B_r$ or $D_r$.} Here we take $G$ to the the special orthogonal group 
$G = \SO_n$, which is of type $B_r$ if $n = 2r + 1$ and of type $D_{r}$ if $n = 2r$.
By~\cite[ (29.29)]{knus1998involutions}, $H^1(K, G)$ can be represented by $n$-dimensional 
quadratic forms $q$ of discriminant $1$ over $K$. In a suitable basis, $q(x_1, \ldots, x_n) = a_1 x^2 + \ldots + a_n x_n^2$
for some $a_1, \ldots, a_n$. Thus $q$ splits over $L = K(\sqrt{a_1}, \ldots, \sqrt{a_n})$. Clearly $\rd_{\bbC}(L/K) \leqslant 1$,
and thus $\rd_{\bbC}(G) \leqslant 1$, as claimed.

\smallskip
{\bf $G$ is of type $G_2$ and $F_4$.}  In both cases
the only primes dividing $|W|$ are $2$ and $3$. By Burnside's 
Theorem, $W$ is solvable~\footnote{One can also see check this directly, without appealing to Burnside's theorem.}.
Thus $\rd_k(G) \leqslant \rd_k(W) \leqslant 1$, where the first inequality follows from Corollary~\ref{cor.group3}
and the second from Example~\ref{ex.solvable-group}.

\smallskip
{\bf $G$ is a simply connected group of type $E_6$.} 
By \cite[Example 9.12]{garibaldi}, $G$ has a subgroup $S$ isomorphic to
$F_4 \times \mu_3$ such that the map $H^1(K, S) \to H^1(K, G)$ 
is surjective; see~\cite[Section 23]{garibaldi}. Here $F_4$ denotes the simply connected group of type $F_4$.
By Lemma~\ref{lem.functor3}(c), $\rd_{\bbC}(S) = \rd_{\bbC} \, H^1(\ast, S) \geqslant \rd_{\bbC} \, H^1(\ast, G) = \rd_{\bbC}(G)$.
Since we know that $\rd_{\bbC}(F_4) \leqslant 1$, 
\[ \rd_{\bbC}(G) \leqslant \rd_{\bbC}(S) = \rd_{\bbC}(F_4 \times \mu_3) = \max \{ \rd_{\bbC}(F_4), \rd_{\bbC}(\mu_3) \} = 1. \]

\smallskip
{\bf $G$ is a simply connected group of type $E_7$.} 
By \cite[Example 12.3]{garibaldi}, $G$ has a subgroup $\widetilde{S}$ isomorphic to
$E_6 \rtimes \mu_4$ such that the map $H^1(K, \widetilde{S}) \to H^1(K, G)$ 
is surjective; see~\cite[Section 23]{garibaldi}. Here $E_6$ denotes the simply connected group of type $E_6$.
Once again, by Lemma~\ref{lem.functor3}(c), $\rd_{\bbC}(\widetilde{S}) \geqslant \rd_{\bbC}(G)$.
Since we know that $\rd_{\bbC}(E_6) \leqslant 1$, we conclude that
$\rd_{\bbC}(G) \leqslant \rd_{\bbC}(\widetilde{S}) = \rd_{\bbC}(E_6 \rtimes \mu_4) = 
\max \{ \rd_{\bbC}(E_6), \rd_{\bbC}(\mu_4) \} = 1$.

\smallskip
This completes the proof of Proposition~\ref{prop.simple} and thus of Theorem~\ref{thm.main3}.
\end{proof}

\begin{remark}  For simply connected groups $G$ 
of type $G_2$, $F_4$, $E_6$ and $E_7$, the inequality $\rd_{\bbC}(G) \leqslant 1$ of Proposition~\ref{prop.simple}(b) can also be deduced from
a theorem of Garibaldi which asserts that
for these groups the Rost invariant $H^1(\ast, G) \to H^3(\ast, \bbZ/n_G\bbZ(2))$ has trivial kernel; 
see~\cite[Theorem 0.5]{garibaldi} or~\cite{chernousov-rost}.
\end{remark}

\section{Can the inequality of Theorem~\ref{thm.main3} be strengthened?}
\label{sect.serre}

Recall that Conjecture~\ref{conj.main5} asserts that the inequality $\rd_k(G) \leqslant 5$ of Theorem~\ref{thm.main3} can be 
strengthened to $\rd_k(G) \leqslant 1$. In this final section we will show that this conjecture
follows from a positive answer to a long-standing open question of Serre~\cite[Question 2]{serre-pp} stated below.

\begin{question} \label{conj.serre} Let $K$ be a field, $H$ be a smooth algebraic group over $K$, 
and $T \to \Spec(K)$ be a $H$-torsor. If $K_i/K$ are finite extensions of $K$ of relatively prime degrees, i.e., 
$\operatorname{gcd}([K_i: K]) = 1$, and each $K_i$ splits $T$, then $T$ is split over $K$.
\end{question}

For a detailed discussion of Question~\ref{conj.serre}, we refer the reader to~\cite{totaro-E8}.

\begin{prop} \label{prop.serre}
Assume that Question~\ref{conj.serre} has a positive answer in the following special situation: 
$K$ is a solvably closed field containing $\bbC$ and $H = (E_8)_K$ is the split simple group of type $E_8$ over $K$.
Then $\rd_k(G) \leqslant 1$ for every field $k$ and every connected algebraic group $G$ over $k$.
\end{prop}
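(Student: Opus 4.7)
Following the reductions in the proof of Theorem~\ref{thm.main3} (via Propositions~\ref{prop.reduced}, \ref{prop.abelian}, \ref{prop.group2}, together with Chevalley's structure theorem), I would first reduce the desired inequality $\rd_k(G) \leqslant 1$ to the case of an almost simple group $G$. By Proposition~\ref{prop.simple}(b) the inequality already holds for every almost simple type other than $E_8$, so the entire content of Conjecture~\ref{conj.main5} is concentrated in the assertion $\rd_k(E_8) \leqslant 1$ for every field $k$. Invoking Theorem~\ref{thm.main1} and Theorem~\ref{thm.main2} (which apply to $E_8$ viewed as a split simple group scheme over $\bbZ$) further reduces us to the single statement $\rd_{\bbC}(E_8) \leqslant 1$. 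Since the functor $H^1(\ast, E_8)$ respects direct limits, Proposition~\ref{prop.level-d}(b) translates this into showing $H^1(K, E_8) = \{1\}$ for every field $K \supseteq \bbC$ closed at level $1$; by Corollary~\ref{cor.level-1}, every such $K$ is perfect and solvably closed.

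Fix such a $K$ and a torsor $T \in H^1(K, E_8)$. The strategy is to exhibit, for each prime $p$, a finite extension $F_p/K$ of degree coprime to $p$ that splits $T$. Once the family $\{F_p\}$ is constructed one has $\gcd_p [F_p : K] = 1$, and the hypothesized positive answer to Question~\ref{conj.serre} applied to $T$ over $K$ immediately forces $T = 1$, finishing the proof.

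For primes $p$ outside the set of torsion primes $\{2, 3, 5\}$ of $E_8$, I would obtain $F_p$ by the standard Sylow construction. Since $E_8$ is smooth, $T$ is split by some finite Galois extension $L/K$; let $H = \Gal(L/K)$, which is necessarily perfect because $K$ is solvably closed. Choose a $p$-Sylow subgroup $H_p \leqslant H$ and set $F_p = L^{H_p}$, so $[F_p : K] = [H : H_p]$ is coprime to $p$. The restriction $T_{F_p}$ is split by the $p$-primary Galois extension $L/F_p$, and since $p$ is not a torsion prime of $E_8$ this forces $T_{F_p} = 1$.

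The main obstacle lies in handling the torsion primes $p \in \{2,3,5\}$, where the Sylow construction is insufficient. To handle them I would exploit the vanishing of Galois cohomology over $K$: $K$ is solvably closed and contains all roots of unity, so by Kummer theory $(K^\times)^n = K^\times$ for every $n \geqslant 1$, and the Norm Residue Isomorphism Theorem then yields $H^d(K, \mu_n^{\otimes e}) = 0$ for every $d \geqslant 1$ and every $e$. In particular the Rost invariant of $T$ in $H^3(K, \bbZ/60(2))$ is trivial. I would then invoke the available structural results on the kernel of the $p$-local component of the Rost invariant of $E_8$ at each torsion prime $p = 2, 3, 5$ to produce, for each such $p$, a finite splitting extension of $T$ over $K$ of degree coprime to $p$. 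This $E_8$-specific cohomological input is the deepest step in the argument; granted it, the remaining assembly via Question~\ref{conj.serre} is formal.
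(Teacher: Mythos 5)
Your reductions and overall strategy coincide with the paper's: reduce to showing that every $E_8$-torsor $T$ over a solvably closed field $K \supseteq \bbC$ is split, produce finitely many splitting fields of coprime degrees, and invoke the assumed answer to Question~\ref{conj.serre}. Two steps, however, do not work as written. For primes $p \geqslant 7$, your Sylow construction gives a field $F_p = L^{H_p}$ with $[F_p : K]$ prime to $p$, but $F_p$ does not split $T$; you then assert $T_{F_p} = 1$ on the grounds that $T_{F_p}$ is split by the $p$-power-degree extension $L/F_p$ and $p$ is not a torsion prime. That implication is not something your setup provides: it is essentially the nontrivial content of Tits' splitting theorem at the prime $p$, and you cannot appeal to Question~\ref{conj.serre} over $F_p$ either, since $F_p$ need not be solvably closed and the hypothesis is only assumed for solvably closed fields. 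The fix is the one the paper uses: cite Tits~\cite{tits-splitting}, which produces a single splitting field $K_{\geqslant 7}/K$ whose degree involves only the primes $2$, $3$, $5$, disposing of all $p \geqslant 7$ at once.

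At the torsion primes, your black box is accurate for $p=3$ and $p=5$ (Chernousov's theorems on the kernel of the Rost invariant modulo $3$ and modulo $5$), but not for $p=2$: the kernel of the mod-$2$ Rost invariant of $E_8$ is not known to consist of classes split by odd-degree extensions, so "the $2$-local component of the Rost invariant" is the wrong tool there. One needs Semenov's higher invariant, defined on the kernel of (a multiple of) the Rost invariant; it too vanishes over $K$ by the Norm Residue Isomorphism Theorem, and Semenov's theorem~\cite{semenov} then yields the odd-degree splitting field. (Semenov's invariant exists only in characteristic $0$, which is harmless here since $K \supseteq \bbC$.) With these two repairs — Tits for $p \geqslant 7$, Chernousov at $3$ and $5$, Semenov at $2$ — your argument becomes the paper's proof.
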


\begin{proof} 
It suffices to show that, under the assumption of the proposition, the inequality $\rd_k(E_8) \leqslant 5$ of Proposition~\ref{prop.simple}(a)
can be strengthened to $\rd_k(E_8) \leqslant 1$. If we can do this, then the argument of Section~\ref{sect.proof-main3} will go through unchanged to show that
$\rd(G)\leqslant 1$ for every field $k$ and every connected algebraic group $G$ over $k$.

By Theorems~\ref{thm.main1} and~\ref{thm.main2} we may further assume that $k = \mathbb C$, as we did in the proof of Proposition~\ref{prop.simple}
in the previous section.
By Proposition~\ref{prop.level-d}(b) it suffices to show that 
every $E_8$-torsor $T \to \Spec(K)$ is split for every field $K \in \Fields_C$, closed at level $1$ (over $\bbC$). 
In fact, we will show that this is the case whenever $K$ is solvably closed; cf.~Corollary~\ref{cor.level-1}. 
By a theorem of Tits~\cite{tits-splitting}, $T$ is split by a finite field extension
$K_{\geqslant 7}/K$ such that 
\begin{equation} \label{e.2-3-5}
\text{the only primes dividing $[K_{\geqslant 7}: K]$ are $2$, $3$ and $5$;}
\end{equation}
see also \cite{totaro-E8}.~\footnote{Note that this step is valid for every $K$; we do not use the assumption that $K$ is solvably 
closed here.}

Now observe that since $K$ is solvably closed, the Norm Residue Isomorphism Theorem
tells us that $H^d(K, \mu_n) = 1$ for every $d, n \geqslant 1$; cf.~Remark~\ref{rem.hn}.
In particular, the class of $T$ lies in the kernel of the Rost Invariant 
$R \colon H^1(K, E_8) \to H^3(K, \mu_{60})$. Theorems of Chernousov now tell us that 
\[ \text{$T$ is split by a finite extension $K_3/K$ such that $3 \, \not | \; [K_3: K]$;} \]
and
\[ \text{$T$ is split by a finite extension $K_5/K$ such that $5 \, \not | \; [K_5: K]$;} \]
see~\cite{chernousov-rost3, chernousov-rost5}. Finally, $T$ also lies in the kernel of the Semenov invariant
\[ H^1(\ast, E_8)_0 \to H^3(\ast, \mu_2), \]
where
$H^1(\ast, E_8)_0$ denote the kernel of the mod 4 Rost invariant, $15R$. Consequently, by~\cite[Theorem 8.7]{semenov}
\[ \text{$T$ is split by a finite extension $K_2/K$ such that $2 \, \not | \; [K_2: K]$.} \]
In summary, $T$ can be split by finite extensions $K_2$, $K_3$, $K_5$ 
and $K_{\geqslant 7}$ of $K$ whose degrees are relatively prime.
The assumption of the proposition now tells us that
$T$ is split over $K$, as desired.
\end{proof} 

\begin{remark} Note that the Semenov invariant is only defined in characteristic $0$.
In prime characteristic our proof of Proposition~\ref{prop.serre} relies on
Theorem~\ref{thm.main2}.
\end{remark}

\section*{Acknowledgement} The author is grateful to Jesse Wolfson for helpful detailed comments on an earlier version of this paper.


\end{document}